\numberwithin{equation}{section}
\newtheorem{letterthm}{Theorem}
\newtheorem{letterdef}[letterthm]{Definition}
\newtheorem{theorem}{Theorem}[section]
\newtheorem{lemma}[theorem]{Lemma}
\newtheorem{corollary}[theorem]{Corollary}
\newtheorem{proposition}[theorem]{Proposition}
\newtheorem{observation}[theorem]{Observation}
\theoremstyle{definition} 
\newtheorem{definition}[theorem]{Definition}
\newtheorem{notation}[theorem]{Notation}
\newtheorem{remark}[theorem]{Remark}
\newtheorem{example}[theorem]{Example}
\newcommand{\act}{\curvearrowright}
\DeclareMathOperator{\atom}{atom}
\newcommand{\cA}{\mathcal A}
\newcommand{\al}{\alpha}
\newcommand{\Arg}{{\text{Arg}}}
\newcommand{\cC}{\mathcal C}
\newcommand{\C}{\mathbf C}
\newcommand{\cD}{\mathcal D}
\DeclareMathOperator{\Diag}{Diag}
\DeclareMathOperator{\dimP}{dim_P}
\newcommand{\cF}{\mathcal F}
\DeclareMathOperator{\Frac}{Frac}
\DeclareMathOperator{\full}{full}
\newcommand{\fH}{\mathfrak H}
\newcommand{\scrH}{\mathscr H}
\DeclareMathOperator{\Hilb}{Hilb}
\DeclareMathOperator{\id}{id}
\DeclareMathOperator{\Ind}{Ind}
\DeclareMathOperator{\Irr}{Irr}
\newcommand{\into}{\hookrightarrow}
\newcommand{\fK}{\mathfrak K}
\newcommand{\scrK}{\mathscr K}
\DeclareMathOperator{\Leaf}{Leaf}
\DeclareMathOperator{\Mod}{Mod}
\newcommand{\N}{\mathbf{N}}
\newcommand{\NInd}{\text{Ind-mixing}}
\newcommand{\cO}{\mathcal O}
\newcommand{\ot}{\otimes}
\newcommand{\onto}{\twoheadrightarrow}
\newcommand{\norm}[1]{\lVert#1\rVert}
\newcommand{\cP}{\mathcal P}
\DeclareMathOperator{\PSU}{PSU}
\newcommand{\cQ}{\mathscr Q}
\newcommand{\R}{\mathbf{R}}
\DeclareMathOperator{\Rep}{Rep}
\newcommand{\reps}{F,T,V,\cO}
\DeclareMathOperator{\diff}{diff}
\DeclareMathOperator{\FD}{FD}
\DeclareMathOperator{\res}{res}
\newcommand{\bS}{\mathbf{S}}
\DeclareMathOperator{\smal}{small}
\newcommand{\sub}[1]{\langle#1\rangle}
\DeclareMathOperator{\supp}{supp}
\newcommand{\fT}{\mathfrak T}
\DeclareMathOperator{\TLJ}{TLJ}
\newcommand{\ti}{\tilde}
\DeclareMathOperator{\U}{U}
\DeclareMathOperator{\VN}{VN}
\DeclareMathOperator{\Ver}{Ver}
\newcommand{\fX}{\mathfrak X}
\newcommand{\scrX}{\mathscr X}
\newcommand{\scrY}{\mathscr Y}
\newcommand{\Z}{\mathbf{Z}}
\newcommand{\fZ}{\mathfrak Z}
\providecommand{\keywords}[1]{\textbf{\textit{Index terms---}} #1}
\begin{document}
	
	\title[Irreducible Pythagorean representations]{Irreducible Pythagorean representations of R.~Thompson's groups and of the Cuntz algebra}
	\thanks{
		AB is supported by the Australian Research Council Grant DP200100067.\\
		DW is supported by an Australian Government Research Training Program (RTP) Scholarship.}
	\author{Arnaud Brothier and Dilshan Wijesena}
	\address{Arnaud Brothier\\ School of Mathematics and Statistics, University of New South Wales, Sydney NSW 2052, Australia and
University of Trieste, Via Weiss, 2 34128 Trieste, Italia}
	\email{arnaud.brothier@gmail.com\endgraf
		\url{https://sites.google.com/site/arnaudbrothier/}}
\address{Dilshan Wijesena\\ School of Mathematics and Statistics, University of New South Wales, Sydney NSW 2052, Australia}
	\email{dilshan.wijesena@hotmail.com}
	\maketitle
	
\begin{abstract}
We introduce the Pythagorean dimension: a natural number (or infinity) for all representations of the Cuntz algebra and certain unitary representations of the Richard Thompson groups called Pythagorean.
For each finite Pythagorean dimension $d$ we completely classify (in a functorial manner) all such representations using finite dimensional linear algebra. Their irreducible classes form a nice moduli space: a real manifold of dimension $2d^2+1$.
Apart from a finite disjoint union of circles, each point of the manifold corresponds to an irreducible unitary representation of Thompson's group $F$ (which extends to the other Thompson groups and the Cuntz algebra) that is not monomial. The remaining circles provide monomial representations which we previously fully described and classified.
We translate in our language a large number of previous results in the literature. We explain how our techniques extend them.\end{abstract}
	
\keywords{{\bf Keywords:} Thompson's groups, Cuntz algebra, Pythagorean representations}

\section*{Introduction}

Richard Thompson defined three countable discrete groups $F\subset T\subset V$ in unpublished notes in the mid 1960s.
These groups are famous for witnessing for the first time rare behaviours in group theory, for remaining mysterious after extensive studies, and for naturally appearing in various parts of mathematics, see the expository article \cite{Cannon-Floyd-Parry96}.
For instance $T,V$ were the first examples of infinite simple finitely presented groups (they are in fact of type $F_\infty$) and $F$ was the first example of a torsion-free group of type $F_\infty$ with infinite geometric dimension \cite{Brown-Geoghegan84, Brown87}. 
The group $F$ is still unknown to be amenable (or weakly amenable, exact or sofic) even though $F$ is not elementary amenable and does not contain non-abelian free groups \cite{Brin-Squier85}.
The lack of understanding of these groups comes from the difficulty in constructing actions for them such as (unitary) representations.
Note that an irreducible representation of $F$ is either one dimensional or infinite dimensional, and thus difficult to construct.

In a different context, Dixmier considered in 1964 the universal C*-algebra $\cO:=\cO_2$ generated by $s_0,s_1$ satisfying $s_0s_0^* + s_1s_1^*=1$ and $s_0^*s_0 = s_1^*s_1 = 1$ \cite{Dixmier64}. 
This C*-algebra was deeply studied by Cuntz and is today named after him \cite{Cuntz77}.
The Cuntz algebra was the first example of a separable purely infinite simple C*-algebra. 
It plays a key role in the classification of simple infinite C*-algebras, see the recent survey \cite{Aiello-Conti-Rossi21} and reference therein for extensive details on $\cO$.
It is a simple non-type I C*-algebra. This implies that its spectrum (i.e.~the space of isomorphism classes of irreducible representations of $\cO$) is large and has no obvious topology except the coarse one, see for instance \cite{Glimm61}.
Moreover, the presentation of $\cO$ forces any of its representation to be infinite dimensional and thus not easily constructible. 
However, many families of representations of $\cO$ have been constructed, classified and studied providing powerful applications to (among others) harmonic analysis and wavelet theory, see for instance the introduction \cite{DHJ-atomic15} and the last section of this present article.

Independently, Birget and Nekrashevych found a tight connection between the Thompson group $V$ and the Cuntz algebra $\cO$ \cite{Birget04, Nekrashevych04}.
Indeed, the elements $s_ws_w^*$, where $w$ is any finite binary string, generate the so-called {\it standard Cartan subalgebra} $\cA\subset\cO$ and its normaliser (i.e.~the group of unitaries $g$ of $\cO$ satisfying $g\cA g^*=\cA$) modulo the unitary group of $\cA$ is isomorphic to Thompson's group $V$ via an explicit embedding built from the $s_w$'s.
The embedding $V\into\cO$ has been widely used for constructing unitary representations of $V$, see the last section of this present article.
We will surprisingly see that this process can often be inverted.

In this article we will consider a third object\,---\,the {\it Pythagorean algebra} $\cP$\,---\,which naturally appeared in {\it Jones' technology} (see below). 
Representations of $\cP$ can be promoted into representations of $F,T,V$ and $\cO$ obtaining {\it all} representations of $\cO$. 
The right notion of representation theory for $\cP$ (called P-module) will provide powerful classification tools and irreducibility characterisations for representations of $F,T,V,\cO$. 
In particular, we will see that a large part of the spectrum of $\cO$ decomposes as a disjoint union of smooth manifolds (while the general theory considers this spectrum as a highly pathological space).

\noindent
\underline{\bf Jones' technology.}
While studying subfactors and conformal field theories Vaughan Jones has surprisingly encountered the Thompson group $T$ \cite{Jones17}, see also the survey \cite{Brothier20-survey} and the introduction of \cite{Brothier22-FS}.
From that discovery he has developed an efficient technology for constructing actions of groups $\Frac(\cC)$ that arise as fraction groups of a category $\cC$ (or more precisely as an isotropy group of the groupoid of fractions of $\cC$) \cite{Jones16}.
Actions of $\cC$ (formally functors) provide actions of $\Frac(\cC)$ and in certain situations it is much easier to concretely construct actions of the former.
For instance, $F$ is the fraction group of the category of binary forests $\mathcal F$ which (as a monoidal category) is singly generated by the unique binary tree with 2 leaves.  
Hence, one morphism $Z\to Z\ot Z$ in any monoidal category defines an action of $F$ and if the category is symmetric we can extend this action to the larger groups $T$ and $V$.

By considering the category of Hilbert spaces $(\Hilb,\oplus)$ equipped with the direct sum for monoidal structure and isometries for morphisms it is then natural to introduce the {\it Pythagorean algebra} $\cP$ \cite{Brothier-Jones}.
This is the universal C*-algebra generated by $a,b$ satisfying the so-called {\it Pythagorean equality}: 
\begin{equation}\label{eq:P}\tag{P}
a^*a+b^*b=1.
\end{equation}
Any (unital *-algebra) representation $$\cP\to B(\fH),\ a\mapsto A,b\mapsto B$$ on a complex Hilbert space $\fH$ defines an isometry $\fH\to\fH\oplus\fH$, thus a monoidal functor $\mathcal F\to\Hilb$, and by Jones' technology yields a unitary representation $\sigma:F\act \scrH$ on a larger Hilbert space $\scrH$ that extends canonically to $\sigma^T:T\act \scrH$ and $\sigma^V:V\act \scrH$.
We call them {\it Pythagorean representations} (in short P-representations) of the Thompson groups.
Remarkably, $\sigma$ extends canonically to an action $\sigma^\cO:\cO\act \scrH$ of the Cuntz algebra whose extension is compatible with the Birget--Nekrashevych embedding $V\into \cO$. 
From there, we can deduce that {\it all} representation of $\cO$ come from a representation of $\cP$, see \cite[Proposition 7.1]{Brothier-Jones}. 
In particular, P-representations of the Thompson groups are the representations that extend to $\cO$.
We may now study all representations of $\cO$ (and P-representations of $F,T,V$) using the algebra $\cP$ that has a more basic presentation than $\cO$ and, in particular, admits {\it finite} dimensional representations.

{\bf This phenomena illustrates well the philosophy of Jones' technology: constructing actions of a complicated object like $F,T,V,\cO$ from actions of a more elementary one like $\cP$.}

All these constructions are {\it functorial} providing four functors $\Pi^X:\Rep(\cP)\to \Rep(X)$ with $X=F,T,V,\cO$ and where $\Rep(X)$ denotes the category of {\it unitary} representations of $F,T,V$ and the category of bounded $*$-representations on {\it complex Hilbert spaces} for $\cP$ and $\cO$.
A morphism (or intertwinner) between two representations $\sigma^X:X\act \scrH,\pi^X:X\act \mathscr K$ is a bounded linear map $\theta:\scrH\to\mathscr K$ satisfying that $$\pi^X(g)\circ \theta = \theta \circ \sigma^X(g) \text{ for all } g\in X$$
where $X=F,T,V,\cO,\cP$.
Our aim is to classify these representations of the Thompson groups and the Cuntz algebra up to {\it unitarily conjugacy} (that is, $\pi^X$ is equivalent to $\sigma^X$ when there exists a $\theta$ as above which is a unitary transformation).

\noindent
\underline{\bf Diffuse and atomic representations.}
We have previously introduced the notion of {\it diffuse} and {\it atomic} representations of $\cP$ (in \cite{BW22} and \cite{BW23}, respectively) and their associated representations of $F$ (see Subsection \ref{subsec:diff-atom-rep}).
Moreover, we proved that any P-representation $\sigma:F\act \scrH$ decomposes canonically into a diffuse part and an atomic part: $\sigma_{\atom}\oplus\sigma_{\diff}$. This result extends tautologically to $T,V,\cO$.

For $Y$ in $F,T,V$, we have proved that a P-representation of $Y$ is diffuse if and only if it is {\it Ind-mixing}: it does not contain any representation induced by a finite dimensional one (i.e.~$\Ind_H^Y\theta\not\subset \sigma_{\diff}$ if $H\subset Y$ is a subgroup and $\theta\in \Rep(H)$ is finite dimensional). 
To our knowledge this is the first class of representations of $Y$ that are known to be Ind-mixing and moreover are not {\it mixing} (for their matrix coefficients do not vanish at infinity by \cite[Section 5]{Brothier-Jones}).

In sharp contrast, any atomic representation of $Y$ constructed via a finite dimensional representation of $\cP$ (we say that the representation of $F$ has finite Pythagorean dimension or in short finite P-dimension, see below) decomposes as a finite direct sum of monomial representations $\Ind_{Y_p}^Y\chi$ (with $\chi:Y_p\to \bS_1$) arising from the so-called {\it parabolic subgroups} $Y_p:=\{g\in Y:\ g(p)=p\}$ of $Y$ (for $p$ in the Cantor space $\cC:=\{0,1\}^{\N}$ and for the usual action of $Y$ on it).

\noindent
\underline{\bf Aim of this paper.}
This paper focuses on {\it diffuse} representations of $F,T,V$ and $\cO$. 
These representations are less tractable and more complex than the atomic ones since they are far from being monomials.
Nevertheless, we provide powerful tools for classifying them and checking their irreducibility.
Additionally, we make a synthesis between novel results in the diffuse case and previous results in the atomic case obtaining general statements on representations of the Cuntz algebra and P-representations of the Thompson groups.

\noindent
{\bf The representation theory of $F, T, V$ and $\cO$ that was known before this
paper.}
Most of previous works in the literature has been focused on representations of $\cO$ (see the end of the introduction and the end of the paper for references and more precise statements).
More specifically, on 
\begin{itemize}
\item atomic representations of $\cO$: mostly of finite P-dimension (see below);
\item diffuse representations of $F,T,V$ and $\cO$ of P-dimension one;
\item specific families of diffuse representations of $F$ and $\cO$ of higher P-dimension.
\end{itemize}
Our previous paper \cite{BW23} provided a complete description and classification of all finite P-dimensional atomic representations for the Thompson groups and the Cuntz algebra.

 \noindent
{\bf The representation theory of $F, T, V$ and $\cO$ that is known because of
this paper.}
This paper introduced the Pythagorean dimension for all representations of $\cO$ and all Pythagorean representations of $F,T,V$. 
We determine and classify all the irreducible {\it diffuse} representations of {\it finite} P-dimension for $F,T,V$ and $\cO$. 
Additionally, we show that the representation theories of all {\it diffuse} representations of $F,T,V,\cO$ are identical. Indeed, we have isomorphisms of categories as stated in Theorem \ref{thm:rigidity-category}, item 2.

\noindent
{\bf The representation theory of $F, T, V$ and $\cO$ that is still unknown even
after this paper.}
The classification of diffuse representations of infinite P-dimension (for $F,T,V,\cO$) remains widely open. 
There are some partial results on those in \cite{olesen2016thompson,Bergmann-Conti03}.
In a work in progress we will present a rather extensive classification of {\it atomic} representations of {\it infinite} P-dimension for the Thompson groups $F,T$ and $V$. The $\cO$-case remains less understood.
Some subfamilies of these latter have been considered and classified previously in \cite{Bratteli-Jorgensen-19, DHJ-atomic15, Bergmann-Conti03, Kawamura-05}.
Finally, not much is known (regarding irreducibility and classification) on non-Pythagorean representations of $F,T,V$.

\noindent
{\bf Dimension function.}
We introduce a {\it dimension function} for all representations of $\cO$ and P-representations of $F,T,V$ (see Definition \ref{def:pythag-dim}.)
The definition is rather obvious but seems at a first glance difficult to compute. 
We will provide a practical way to determine it.
Recall that for $X=F,T,V,\cO$ we have the {\it Pythagorean functor} $\Pi^X:\Rep(\cP)\to\Rep(X)$.

\begin{letterdef}\label{ldef:dimension}
Let $\sigma^X$ be a P-representation of $X=F,T,V$ (resp.~\emph{any} representation of $X=\cO$).
The \emph{Pythagorean dimension} $\dim_P(\sigma^X)$ of $\sigma^X$ (in short the P-dimension) is the minimal dimension of $\pi$ for a representation of $\cP$ satisfying that $\sigma$ and $\Pi^X(\pi)$ are unitarily conjugate, in symbols:
$$\dim_P(\sigma^X):=\min(\dim(\pi): \pi\in \Rep(\cP), \Pi^X(\pi)\simeq \sigma^X).$$
We take the convention that $\dim_P(\sigma^X)=\infty$ if there are no finite dimensional representation $\pi$ of $\cP$ satisfying $\Pi^X(\pi)\simeq\sigma^X$.
\end{letterdef}

Note that $\dim_P$ does not depend on $X=F,T,V,\cO$, i.e.~if $\sigma^X\in \Rep(X)$ and $\sigma^\cO\restriction_X=\sigma^X$, then $\dim_P(\sigma^\cO)=\dim_P(\sigma^X)$.
The P-dimension is a new invariant for all the representations of $\cO$ and for the P-representations of $F,T,V$.
We will show that it is a meaningful one: for each value of $d\in\N$ there are continuously many pairwise non-unitarily equivalent {\it irreducible} representations of P-dimension $d$, see Theorems \ref{thm:manifold-diffuse} and \ref{thm:manifold-atomic}.
Hence, many representations of $\cO$ (that are necessarily infinite dimensional) have in fact ``finite dimensional roots''. 
Moreover, we will prove it is additive:
$$\dim_P(\sigma\oplus\pi)=\dim_P(\sigma)+\dim_P(\pi).$$
This will be first proven for the diffuse case in Proposition \ref{prop:p-dim-sum}. Then in Subsection \ref{subsec:atomic-case} it will be explained how the result extends to all P-representations.
The tensor product of P-representations of the Thompson groups is not in general a P-representation.  
Hence, it does not make much sense to ask if $\dim_P$ is multiplicative or not for the usual tensor product.
However, in a future article the authors will introduce a new ``tensor product'' $\boxtimes$ defined for a large class of P-representations and thus representations of $\cO$. This class of P-representations will be closed under $\boxtimes$ and moreover $\dim_P$ will be shown to be multiplicative for $\boxtimes$.

We mainly restrict the analysis of this article to representations with {\it finite} P-dimension where our techniques are the most powerful.
Additionally, we mainly focus on the {\it diffuse} case.
Being diffuse and of finite P-dimension are the two key conditions which will make all our machinery work.
Indeed, under these two assumptions we will concretely show that all intertwinners between P-representations of $\cO$ (and thus of $F,T,V$) arise at the level of $\cP$. 
{\bf This would permit to reduce difficult problems of irreducibility and unitarily conjugacy of infinite dimensional representations into more tractable problems in finite dimensional linear algebra.}

Before stating our first main theorem we introduce {\it Pythagorean modules} (in short P-module) and morphisms between them. 
They form a category $\Mod(\cP)$ with more subtle notions of decompositions and reductions.

\noindent
{\bf Pythagorean module.} 
A P-module is a triple $m:=(A,B,\fH)$ where $\fH$ is a Hilbert space and $A,B\in B(\fH)$ are bounded linear operators from $\fH$ to $\fH$ satisfying the Pythagorean equality \eqref{eq:P}.
So far this is the same information as a representation $\pi$ of $\cP$ where $A=\pi(a),B=\pi(b)$.
Now, a morphism $\theta:m\to m'$ between two P-modules is a bounded linear map $\theta:\fH\to\fH'$ satisfying that $\theta\circ A=A'\circ \theta$ and $\theta\circ B=B'\circ \theta$.
Hence, we do not require that $\theta$ intertwinnes the adjoints which is the key difference.
This provides more morphisms and makes it more difficult for a P-module to be irreducible.

A sub-module of $m$ is then a P-module $m'=(A',B',\fH')$ such that $\fH'\subset \fH$ and the restrictions of $A,B$ to $\fH'$ are $A',B'$, respectively.

\noindent
{\bf Complete, full and residual.}
Given a P-module $(A,B,\fH)$ we may have the decomposition $\fH=\fH_0\oplus\fZ$ where $\fH_0$ is a sub-module but $\fZ$ is only a vector subspace and does not contain any nontrivial sub-module.
In that case we say that $\fH_0$ is a {\it complete sub-module} and $\fZ$ is a {\it residual subspace}.
Moreover, $\fH$ is called {\it full} if $\fZ=\{0\}$ for all such decompositions.

\noindent
{\bf Reducible and decomposable P-modules.}
For unitary representations of groups and *-algebra representations of C*-algebras, irreducibility and indecomposability are the same notions. Although, they differ for P-modules.
A P-module $(A,B,\fH)$ is {\it decomposable} if $\fH=\fH_1\oplus\fH_2\oplus\fZ$ with $\fH_1,\fH_2$ nontrivial sub-modules and is called {\it reducible} if $\fH=\fH_0\oplus\fZ$ with $\fH_0$ a sub-module and $\fZ$ a nonzero vector subspace (possibly residual). It is otherwise indecomposable and irreducible, respectively.

Here is a key technical result which permits to easily compute the P-dimension of a representation. 
We will constantly use it for computing intertwinner spaces.

\begin{letterthm}\label{thm:smallest-module}
Let $(A,B,\fH)$ be a \emph{finite dimensional} P-module and let 
$$\sigma:\cO\act\scrH, s_i\mapsto S_i,i=1,0$$ be the associated representation of the Cuntz algebra $\cO$.
In particular, $(S_0^*,S_1^*,\scrH)$ is a P-module containing $\fH$. 
Define $\mathcal M$ to be the set of all \emph{complete} sub-modules of $\scrH$.

The following assertions are true.
\begin{enumerate}
\item The set $\mathcal M$ admits a smallest element $\fH_{\smal}$ for the inclusion, i.e.~the intersection $\fH_{\smal}$ of all elements of $\mathcal M$ is in $\mathcal M$. In particular, $\fH_{\smal}$ is contained in $\fH$.
\item The P-dimension $\dimP(\scrH)$ of $\scrH$ is equal to the usual dimension $\dim(\fH_{\smal})$ of the vector space $\fH_{\smal}$.
\item In particular, if $(A,B,\fH)$ is \emph{full} then $\fH=\fH_{\smal}$ and $\dim_P(\scrH) = \dim(\fH)$.
\end{enumerate}
\end{letterthm}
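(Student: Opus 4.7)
My strategy is to leverage the fact that $\scrH$ is built over the finite-dimensional seed $\fH$, which makes $\mathcal{M}$ essentially a finite lattice. The plan is to first verify that $\fH$ itself belongs to $\mathcal{M}$: in Jones' construction $\scrH$ admits an orthogonal decomposition $\scrH = \fH \oplus \fH^{\perp}$ where $\fH^{\perp}$ is built by iteratively applying $S_0, S_1$ to diffuse data in $\fH$. This $\fH^{\perp}$ is residual, because any $S_i^*$-invariant subspace $\fL \subseteq \fH^{\perp}$ would, after enough applications of $S_i^*$, produce non-zero vectors in $\fL \cap \fH = 0$. Moreover, for any $\fK \in \mathcal{M}$ the intersection $\fK \cap \fH$ again lies in $\mathcal{M}$, so $\mathcal{M}$ retracts onto the finite sub-lattice $\{\fK \in \mathcal{M} : \fK \subseteq \fH\}$.

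Next I would prove that $\mathcal{M}$ is closed under pairwise intersection. Given $\fK_1, \fK_2 \in \mathcal{M}$ with $\scrH = \fK_i \oplus \fZ_i$, I would build a candidate residual complement for $\fK_1 \cap \fK_2$ by a diagram chase on the projections $P_{\fK_i}$, then verify residuality by projecting any putative sub-module of the complement onto each $\fK_i$ and extracting a contradiction from residuality of the $\fZ_i$'s. Combined with the first step this shows $\fH_{\smal} := \bigcap_{\fK \in \mathcal{M},\,\fK \subseteq \fH} \fK \in \mathcal{M}$ and is the smallest element of $\mathcal{M}$, proving part~(1).

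For part~(2), the restriction of $(S_0^*, S_1^*)$ to $\fH_{\smal}$ is itself a finite-dim P-module (the Pythagorean equality passes to any invariant subspace), and Jones' construction applied to it recovers $\scrH$ up to unitary conjugacy, because completeness of $\fH_{\smal}$ guarantees no information is lost into the residual complement; this gives $\dimP(\scrH) \leq \dim \fH_{\smal}$. Conversely, any finite-dim P-module $\pi'$ inducing $\scrH$ via a unitary $U$ embeds its underlying space as a complete sub-module of $\scrH$, hence lies in $\mathcal{M}$ and has dimension at least $\dim \fH_{\smal}$. Part~(3) is then immediate from the definition of full together with (2): if $(A,B,\fH)$ admits no proper sub-module with a residual complement, the only candidate for $\fH_{\smal}$ is $\fH$ itself.

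The main obstacle I anticipate is the intersection-closure step. Residuality is a \emph{negative} property (absence of invariant subspaces) that does not behave naturally under lattice operations, and I expect the argument to require simultaneously using the Cuntz tower structure of $\scrH$ and the finite-dimensionality of $\fH$, rather than working with abstract sub-module lattices.
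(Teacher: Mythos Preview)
Your proposal has a genuine gap at precisely the step you flag as the main obstacle, but the gap appears earlier than you think. The claim that ``for any $\fK \in \mathcal{M}$ the intersection $\fK \cap \fH$ again lies in $\mathcal{M}$'' is essentially the entire content of the theorem, and you offer no argument for it. A priori there is no reason a complete sub-module $\fK \subset \scrH$ should even meet $\fH$ non-trivially: $\fK$ could live entirely in the inductive-limit tail. Your sketched justification for $\fH^\perp$ being residual (``after enough applications of $S_i^*$, produce non-zero vectors in $\fL\cap\fH$'') is not correct as stated, since applying $\tau_i = S_i^*$ to vectors in $\fH^\perp$ does not drive them into $\fH$; the partial isometries $\tau_\nu$ act by snipping subtrees, not by inverting the limit construction. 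Likewise the intersection-closure step via ``diagram chase on projections'' is too vague: residuality is not stable under the lattice operations you need, and there is no abstract module-theoretic reason for $\mathcal{M}$ to be closed under intersection.

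The paper's proof is quite different in character and is \emph{analytic}, not lattice-theoretic. In the diffuse irreducible case (Theorem \ref{theo:min-invariant-subspace}) one fixes a minimal complete sub-module $\fK$ (existence via Lemma \ref{lem:pyth-dim-tau-mod}) and an arbitrary non-trivial sub-module $\scrX$, then uses the density of $\langle\fK\rangle$ in $\scrH$ to approximate unit vectors of $\scrX$ by tree-decorated vectors with components in $\fK$; a pigeonhole-type estimate locates a leaf where the normalised component of $\scrX$ is $\varepsilon$-close to a unit vector of $\fK$. Compactness of the unit sphere in the \emph{finite-dimensional} $\fK$ then forces $\fK\cap\scrX \neq 0$, hence $\fK\subset\scrX$ by minimality. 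The reducible diffuse case (Corollary \ref{cor:min-invariant-subspace}) is bootstrapped from this via the decomposition into irreducibles (Theorem \ref{theo:pythag-irrep}), and the atomic case is handled separately using the explicit classification from \cite{BW23}. Your approach does not distinguish diffuse from atomic, whereas the paper treats them by entirely different methods; any unified argument would need to replace the compactness step with something equally substantive.
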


In the {\it diffuse} case the theorem is proven in two parts in the main body of the paper. First, the case when $\sigma$ is {\it irreducible} is proven as Theorem \ref{theo:min-invariant-subspace} in Subsection \ref{subsec:smallest-P-mod}. Then the general diffuse case is proven as Corollary \ref{cor:min-invariant-subspace} in Subsection \ref{subsec:ext-reducible-rep}. The reason for this choice of presentation is because the proof of the general case requires Theorem \ref{theo:pythag-irrep} to describe how to decompose $\sigma$ into irreducible components.
The {\it atomic} case is mainly deduced from our previous classification from \cite{BW23}. This is explained in Subsection \ref{subsec:atomic-case}.
Theorem \ref{thm:smallest-module} does {\it not} extend to {\it infinite} P-dimension, see Remark \ref{rem:THB}.
Note that several results in Section \ref{sec:classification} concerning diffuse P-representations are stated for $F$. However, they all readily extend to $T,V,\cO$ by Theorem \ref{theo:isom-diff-functor} (i.e.~Theorem \ref{thm:rigidity-category} item 4).

Hence, for any representation $\sigma:\cO\to B(\scrH)$ of the Cuntz algebra with finite P-dimension we have a canonical subspace $\fH_{\smal}$ given by the last theorem. We show that all intertwinners of the (necessarily infinite dimensional) space $\scrH$ restrict to intertwinners of the finite dimensional subspace $\fH_{\smal}$.
Using again the diffuse assumption we are able to deduce similar statements in the Thompson groups case.

\begin{letterthm}\label{thm:rigidity}
Consider two \emph{diffuse and finite dimensional} P-modules $(A_1,B_1,\fH_1)$ and $(A_2,B_2,\fH_2)$ with associated representations $\sigma^X_1:X\act \scrH_1$ and $\sigma^X_2:X\act \scrH_2$ where $X=F,T,V,\cO$.
Let $\fH_{i,\smal},i=1,2$ be the smallest complete sub-module of $\scrH_i$ from Theorem \ref{thm:smallest-module}. 
The following assertions are true.
\begin{enumerate}
\item The representation $\sigma_1:F\act\scrH_1$ is irreducible if and only if $\fH_{1}$ is indecomposable. 
\item If $\Theta:\scrH_1\to\scrH_2$ intertwinnes the representations of $F$, then $\Theta$ restricts to a P-module morphism $\theta:\fH_{1,\smal}\to\fH_{2,\smal}$. 
In particular, the representations $\sigma_1$ and $\sigma_2$ of $F$ are unitarily equivalent if only if the sub-modules $\fH_{1,\smal}, \fH_{2,\smal}$ are. 
\end{enumerate}
The above statements continue to hold if we replace $F$ by $T,V$ or $\cO$.
\end{letterthm}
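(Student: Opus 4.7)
The plan is to establish item 2 first and then to deduce item 1 from it. The heart of the argument is to show that any bounded $F$-intertwiner $\Theta\colon\scrH_1\to\scrH_2$ is automatically an intertwiner for the action of the Cuntz algebra $\cO$, after which the conclusion follows from Theorem~\ref{thm:smallest-module} applied to the canonical finite-dimensional sub-modules $\fH_{i,\smal}$.

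First I would show that every $F$-intertwiner is in fact an $\cO$-intertwiner. Since $F\subset V\subset\cO$ via the Birget--Nekrashevych embedding, the reverse inclusion of intertwiner spaces is automatic, so the task is to promote an $F$-intertwiner to an $\cO$-intertwiner. This is the crux of the theorem and the principal obstacle. The essential input is that diffuse P-representations of $F$ are Ind-mixing, hence contain no subrepresentation induced from a parabolic stabiliser $F_p$. Concretely, one should express the Cuntz isometries $S_0, S_1$, or equivalently the Cartan projections $p_w = S_w S_w^*$, as strong-operator limits of operators built from $\sigma_i(g)$, $g\in F$, on the diffuse component; the diagonal algebra $\cA = C(\cC)$ pairs with the $F$-action on the Cantor space $\cC$ in a way entirely controlled by the spectral decomposition, and the absence of atomic (monomial) components ensures these spectral projections are detected by the $F$-action alone. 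Commutation of $\Theta$ with $\sigma_i(F)$ then forces commutation with $S_0, S_1$.

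Next, granted that $\Theta$ commutes with $S_0^*, S_1^*$, I would deduce $\Theta(\fH_{1,\smal})\subset\fH_{2,\smal}$ using the intrinsic characterisation of $\fH_{i,\smal}$ from Theorem~\ref{thm:smallest-module}. Consider $\mathfrak N := \Theta^{-1}(\fH_{2,\smal})$: it is a closed $(S_0^*, S_1^*)$-invariant subspace of $\scrH_1$, i.e.\ a sub-module. The key sub-step is to verify that $\mathfrak N$ is \emph{complete}: a residual complement of $\fH_{2,\smal}$ in $\scrH_2$ pulls back through $\Theta$ (using that $\ker\Theta$ is itself a closed invariant subspace contained in $\mathfrak N$) to produce a complement of $\mathfrak N$ in $\scrH_1$ admitting no nontrivial sub-module, because any such sub-module would inject into a residual space. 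The minimality clause of Theorem~\ref{thm:smallest-module}(1) then forces $\fH_{1,\smal}\subset\mathfrak N$, so $\theta := \Theta|_{\fH_{1,\smal}}$ maps into $\fH_{2,\smal}$; it intertwines $A_i, B_i$ automatically, since these are the restrictions of $S_0^*, S_1^*$ to $\fH_{i,\smal}$. For $X\in\{T,V,\cO\}$, an $X$-intertwiner is a fortiori an $F$-intertwiner, so item 2 extends verbatim.

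For item 1, I apply item 2 with $\sigma_2 = \sigma_1$. If $\sigma_1^F$ is reducible, the orthogonal projection onto a proper nontrivial $F$-invariant closed subspace is a nontrivial $F$-intertwiner and therefore restricts to a nontrivial idempotent P-module endomorphism of $\fH_{1,\smal}$---nontriviality being guaranteed by the fact that $\fH_{1,\smal}$ generates $\scrH_1$ as an $\cO$-module, so vanishing on $\fH_{1,\smal}$ would force vanishing on $\scrH_1$---yielding a decomposition of $\fH_{1,\smal}$, and hence of $\fH_1$ after absorbing the residual summand, that witnesses decomposability. Conversely, a P-module decomposition $\fH_1 = \mathfrak M_1 \oplus \mathfrak M_2 \oplus \mathfrak Z$ is mapped by the functor $\Pi^F$ to an $F$-invariant direct-sum decomposition of $\scrH_1$, so $\sigma_1^F$ is reducible. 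The same equivalences transfer to $T, V, \cO$ using that $X$-invariant subspaces are $F$-invariant and vice versa in the diffuse case (a consequence of item 2 applied to the relevant projections). The principal obstacle in the whole proof is thus Step~1: converting the weak data of an $F$-intertwiner into the much stronger data of a morphism of $\cO$-representations is the rigidity phenomenon driving the theorem, and it is precisely where the diffuse hypothesis is used essentially.
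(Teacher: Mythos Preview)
Your Step~1 is correct and coincides with the paper's approach: the passage from $F$-intertwiners to $\cO$-intertwiners is exactly Proposition~\ref{prop:pyth-vNA} and Corollary~\ref{coro:P-rep-intertwinner}, proved by exhibiting $\tau_0,\tau_1$ as SOT-limits of operators in the algebra generated by $\sigma(F)$.

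The gap is in Step~2. You claim that $\mathfrak N:=\Theta^{-1}(\fH_{2,\smal})$ is a \emph{complete} sub-module of $\scrH_1$, but your justification (``a residual complement of $\fH_{2,\smal}$ pulls back through $\Theta$ to a complement of $\mathfrak N$ with no nontrivial sub-module'') does not go through. Orthogonal complements do not pull back along a non-unitary $\Theta$: if $\scrY\subset\mathfrak N^\perp$ is a nontrivial sub-module, then $\Theta\restriction_\scrY$ is injective (since $\ker\Theta\subset\mathfrak N$) and $\Theta(\scrY)\cap\fH_{2,\smal}=\{0\}$, but this does \emph{not} force $\Theta(\scrY)\subset\fH_{2,\smal}^\perp$. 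So you cannot conclude that $\Theta(\scrY)$ (or its closure) sits inside the residual space, and the contradiction does not materialise. Without completeness of $\mathfrak N$, the minimality clause of Theorem~\ref{thm:smallest-module} gives you nothing.

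This is why the paper proceeds in the opposite order. Item~1 is proved \emph{directly} as Theorem~\ref{theo:pythag-irrep}: given a nontrivial $F$-invariant subspace with orthogonal projection $q$, one shows that $q\restriction_\fH$ is injective (its kernel would be a complete sub-module of the indecomposable $\fH$, forcing $q=0$), hence bounded below on the finite-dimensional $\fH$, and then this lower bound propagates to all of $\scrK$ via $q\tau_\nu^*=\tau_\nu^* q$, giving $q=\id$. Only \emph{after} item~1 is in hand does the paper decompose $\sigma_1,\sigma_2$ into finitely many irreducibles (Corollary~\ref{coro:pythag-irrep}) and invoke Schur's lemma to reduce item~2 to the irreducible case (Theorem~\ref{theo:pythag-equiv}), where the smallest nontrivial sub-module is contained in \emph{every} nontrivial sub-module and the argument is immediate. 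Your derivation of item~1 from item~2 is fine in principle, but the route through item~2 first appears to require exactly the decomposition that item~1 provides.
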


Statement $(1)$ of above is proven as Theorem \ref{theo:pythag-irrep} in Subsection \ref{subsec:pyth-irrep-equiv} meanwhile statement $(2)$ is heavily reliant on Theorem \ref{thm:smallest-module} and thus is proven in two steps. The irreducible case is proven as Theorem \ref{theo:pythag-equiv} in Subsection \ref{subsec:pyth-irrep-equiv} and the result is completed by Corollary \ref{cor:intertwinner-rep-to-mod} in Subsection \ref{subsec:ext-reducible-rep}.

Theorem \ref{thm:rigidity} is very surprising and powerful.
Note that both statements are no longer true in the infinite P-dimensional case.
A similar rigidity statement holds in the \emph{atomic} finite P-dimensional case (see Theorem \ref{thm:rigidity-category}). 
Although, this latter result relies on a complete description and classification of atomic P-modules and representations, see Section \ref{subsec:atomic-case}.

Theorem \ref{thm:rigidity} permits to decide easily if representations are irreducible or if two of them are unitarily equivalent or not.
We can then construct huge families of irreducible representations of $F,T,V,\cO$ that are pairwise inequivalent by carefully choosing pairs of matrices and using elementary linear algebra, see Theorem \ref{thm:manifold-diffuse} and Theorem \ref{thm:manifold-atomic}.

\noindent
{\bf Categorical version of Theorem \ref{thm:rigidity}.}
All constructions using Jones' technology are in essence {\it functorial} permitting us to reformulate and strengthen our main results using the language of categories.
Below we combine several results (old and new) to obtain four categorical statements.
Define $\Mod(\cP)$ to be the category of P-modules with morphisms as previously described. We use the superscript $\FD$ and subscripts $\full,\diff$ to write the various full subcategories of $\Mod(\cP)$ or $\Rep(X)$ with $X=F,T,V,\cO$ that we may consider where $\FD$ stands for finite P-dimensional, $\diff$ for diffuse and $\full$ for full. 
Additionally, $\Mod^{FD+}(\cP)$ is the category of finite P-dimensional P-modules $m$ such that $m$ does not contain any atomic P-module of P-dimension $1$ (and similarly we define $\Rep^{\FD+}(F)$).

\begin{letterthm}\label{thm:rigidity-category}
The following assertions are true.
\begin{enumerate}
\item The functor $\Pi^Z:\Mod_{\full}^{\FD}(\cP)\to \Rep^{\FD}(Z)$ 
is an \emph{equivalence of categories} for $Z=T,V,\cO$ (i.e.~for any $\sigma\in \Rep^{\FD}(Z)$ there exists $m\in \Mod_{\full}^{\FD}(\cP)$ so that $\Pi^Z(m)\simeq \sigma$ and $\Pi^Z$ gives bijections on morphism spaces).
\item The functor $\Pi^F:\Mod_{\full}^{\FD+}(\cP)\to \Rep^{\FD+}(F)$ is an \emph{equivalence of categories}.
\item The categories $\Rep^{\FD}(Z)$ with $Z=T,V,\cO$ and $\Rep^{\FD+}(F)$ are all \emph{semisimple}: each representation decomposes into a finite direct sum of irreducible ones which belong to the category.
\item The four categories $\Rep_{\diff}(X)$ with $X=F,T,V,\cO$ are \emph{isomorphic} via the obvious forgetful functors (i.e.~equivalences of categories that are \emph{bijective} on objects).
\end{enumerate}
\end{letterthm}

The first and second statements combine Theorem \ref{thm:rigidity} for the diffuse case (translated in categorical language) and the classification of atomic P-modules and representations in Section \ref{subsec:atomic-case}. 
The third is a consequence of the first two since $\Mod_{\full}^{\FD}(\cP)$ and $\Mod_{\full}^{\FD+}(\cP)$ are semisimple.
The fourth statement comes from a separate argument which, in contrast to the other results, does not require finite P-dimensions, see Theorem \ref{theo:isom-diff-functor}.

One can observe that $\Mod_{\full}(\cP)$ is isomorphic to the category of representations of $\cP$ whose underlying P-modules are full. Although, we keep using the terminology {\it P-module} rather than {\it representation} to insist on the choice of morphisms considered between them.

\noindent
{\bf Classification for each finite P-dimension.}
We fix a (finite) dimension $d\geq 1$ and introduce coordinates by considering all P-modules $(A,B,\C^d)$. Hence, $A,B$ are now $d$ by $d$ complex matrices satisfying $A^*A+B^*B=I_d$. 
Equivalently, this is the set of all $2d$ by $d$ matrices $R$ so that $R^*R=I_d$ which forms a submanifold of $M_{2d,d}(\C)$ of (real) dimension $3d^2$ (to see this consider the smooth map $M_{2d,d}(\C)\to H_d, R\mapsto R^*R$ valued in the hermitian matrices and take the pre-image of $I_d$).
We consider the subset $\Irr(d)$ of all {\it irreducible} P-modules $(A,B,\C^d)$ which is {\it open} and thus forms a submanifold of same dimension.
The irreducibility of the P-modules of $\Irr(d)$ implies  that $\PSU(d)$ (the projective special unitary group of $\C^d$) acts by conjugation {\it freely} on it.

\noindent
{\bf The diffuse picture.}
The subset of all {\it diffuse} irreducible P-modules $(A,B,\C^d)$ is again open in $\Irr(d)$ and thus forms a submanifold $\Irr_{\diff}(d)$ of same dimension $3d^2$ that is stabilised by $\PSU(d)$.
Using Theorem \ref{thm:rigidity-category} (item 1) we conversely deduce that {\it all} irreducible diffuse representations of $X$ (with $X=F,T,V,\cO$) of P-dimension $d$ are constructed from P-modules in $\Irr_{\diff}(d)$.
Moreover, the $\PSU(d)$-orbits classify those representations by Theorem \ref{thm:rigidity}.
Since the action of $\PSU(d)$ is free, proper (by compactness of $\PSU(d)$) and is clearly smooth we can pull back a manifold structure on the orbit space. We deduce the following beautiful classification (see Theorem \ref{theo:diffuse-d}).

\begin{letterthm}\label{thm:manifold-diffuse}
Consider a natural number $d\geq 1$ and let $\Irr_{\diff}(d)$ be the set of irreducible diffuse P-modules $(A,B,\C^d)$.
Fix $X$ being either $F,T,V$ or $\cO$.
The following assertions are true.
\begin{enumerate}
\item The set $\Irr_{\diff}(d)$ forms a smooth submanifold of $M_{2d,d}(\C)$ of real dimension $3d^2$. 
\item The formula 
$$u\cdot (A,B):=(uAu^*,uBu^*)$$ 
defines a free proper smooth action of the Lie group $\PSU(d)$ on $\Irr_{\diff}(d)$.
\item The orbit space $\PSU(d)\backslash \Irr_{\diff}(d)$ has a natural structure of manifold of dimension $2d^2+1$. 
\item If $m\in \Irr_{\diff}(d)$, then $\Pi^X(m)$ is an irreducible diffuse representation of $X$ of P-dimension $d$.
If $m,\ti m\in \Irr_{\diff}(d)$, then $\Pi^X(m)\simeq \Pi^X(\ti m)$ if and only if $m$ and $\ti m$ are in the same $\PSU(d)$-orbit.
\end{enumerate}
\end{letterthm}

For each $d\geq 1$ we have a moduli space of dimension $2d^2+1$ that parametrises the diffuse irreducible classes of representations of $F,T,V,\cO$ of P-dimension $d$.
For $d=1$ we have that $\Irr_{\diff}(1)$ is the 3-sphere minus two subcircles (corresponding to the atomic representations of P-dimension $1$) and $\PSU(1)$ acts trivially on it so their associated representations are pairwise unitarily inequivalent.
For $d=2$ we have that $\Irr_{\diff}(2)$ is a manifold of dimension $12$ and the orbit space of dimension $9$. 

\noindent
{\bf The atomic picture.}
We now describe the atomic picture that is deduced from our previous article \cite{BW23} and is recalled in Section \ref{subsec:atomic-case}.
We include it here for completeness.
Fix $d\geq 2$ (we discard the case $d=1$ for this discussion) and define $W_d$ to be a set of representatives of all prime binary strings of length $d$ modulo cyclic permutations.
Given $w\in W_d$ and a complex number $z\in\bS_1$ of modulus one we define (using matrices) an atomic irreducible P-module $m_{w,z}$ of finite dimension $d$.
Conversely, all atomic irreducible P-module of finite P-dimension is isomorphic to a $m_{w,z}$.
Moreover, $\Pi^Y(m_{w,z})$, where $Y=F,T,V$, is a monomial representation built from the parabolic subgroup $Y_p\subset Y$ (where $p=w^\infty$ is the infinite word of period $w$) and the phase $z$.
This latter is irreducible. 
Moreover, one can show that $m_{w,z}\simeq m_{v,y}$ if and only if $(w,z)=(v,y)$ if and only if $\Pi^X(m_{w,z})\simeq \Pi^X(m_{v,y})$ for $X=F,T,V,\cO$.
We deduce the following.

\begin{letterthm}\label{thm:manifold-atomic}
Consider $d\geq 1$ and $X=F,T,V,\cO$. Write $\Irr_{\atom}(d)$ for the irreducible atomic P-modules of dimension $d$.
The following assertions are true.
\begin{enumerate}
\item The map $\PSU(d)\times W_d\times \bS_1\to \Irr_{\atom}(d),\ (u,w,z)\mapsto u\cdot m_{w,z}$ is an isomorphism of manifolds. Hence, $\Irr_{\atom}(d)$ is a compact manifold of dimension $d^2$.
\item The orbit space $\PSU(d)\backslash \Irr_{\atom}(d)$ is isomorphic to the manifold $W_d\times \bS_1$.
\item If $(X,d)\neq (F,1)$, then $\Pi^X$ restricts to a bijection from $\{m_{w,z}:\ (w,z)\in W_d\times \bS_1\}$ to the isomorphism classes of irreducible atomic P-representations of $X$ of P-dimension $d$.
\end{enumerate}
\end{letterthm}

From this geometric point of view there are much more diffuse representations than atomic ones.

\noindent
\underline{\bf Pythagorean representations among examples in the literature.}
We end the article by describing a number of important constructions of representations of the Thompson groups and of the Cuntz algebra that have been previously classified. 
We list below all these families mostly using their original names:
\begin{itemize}
\item $BJ$: the {\it permutation} representations of Bratteli and Jorgensen of $\cO$ \cite{Bratteli-Jorgensen-19};
\item $DHJ$: the {\it purely atomic} representations of Dutkay, Haussermann and Jorgensen of $\cO$ \cite{DHJ-atomic15};
\item $ABGP$: a specific subclass of permutation representations studied by Barata--Pinto, Ara\'ujo--Pinto and Guimar\~aes--Pinto that they restrict to $F,T,V$ \cite{barata2019representations,Araujo-Pinto22,Guimaraes-Pinto22};
\item $G$ and $O$: some deformations of the Koopman representations of $F,T,V$ from Garncarek and from Olesen, and a subfamily $O_B\subset O$ of {\it Bernoulli} representations \cite{Gar12,olesen2016thompson};
\item $MSW$: the {\it Hausdorff} representations of Mori, Suzuki and Watatani of $\cO$ \cite{MSW-07};
\item $ABC$: the {\it induced product} representations of Aita, Bergmann and Conti of $\cO$ \cite{Aita-Bergmann-Conti97,Bergmann-Conti03};
\item $K$: the {\it generalised permutation} representations of Kawamura of $\cO$ \cite{Kawamura-05};
\item $J$: the {\it Wsyiwyg} representations of Jones of $F$ and $T$ \cite{Jones21};
\item $M$: the monomial representations of $F,T,V$; notably some representations constructed by Golan--Sapir and Aiello--Nagnibeda \cite{Golan-Sapir,Aiello-Nagnibeda22}.
\end{itemize}

Most of the families of representations of above were only defined for the Cuntz algebra. Using the embedding of Birget--Nekrashevych $V\into \cO$ they yield representations of $F,T,V$ that are automatically Pythagorean by \cite[Proposition 7.1]{Brothier-Jones}. 
{\bf Overall, almost all families of irreducible representations of the Thompson groups that we have encountered are Pythagorean.}
Note that it is easier for a representation of $\cO$ to be irreducible than for its restriction to $F$ (resp.~$T,V$) and it is harder for two representations of $\cO$ to be equivalent than for their restriction to $F$ (resp.~$T,V$). Hence, often analysis conducted on representations of $\cO$ cannot always be trivially adapted to $F,T,V$ and vice versa. Nevertheless, our analysis will permit to transfer many results.

For most of the families of above we provide an explicit description of their associated P-modules, determine if they are atomic or diffuse, and compute their P-dimension.
We notice that most examples in the literature are either atomic or are diffuse of dimension one. 
In particular, we obtain the following chain and lattice of inclusions:
$$ABGP\subset BJ\subset DHJ \text{ and } \begin{array}{ccccc}
MSW & \subset & O_B & \subset & \Irr_{\diff}(1)\\
&& \cup&&\\
& & G&&
\end{array}$$
where the family of so-called purely atomic representations $DHJ$ corresponds to what we have called ``atomic representation'' (hence the two terminologies fortunately match) and where $O_B$ is contained in the set of one dimensional diffuse P-representations (which is in bijection with $\Irr_{\diff}(1)$). 
We recover and extend most of the results of these previous articles that concern irreducibility and classification (of course there are other important problems treated in these articles that are not considered by ours). 
Considering all these families in our unified framework permits to answer various open question (like some of the recent survey \cite{Aiello-Conti-Rossi21}).
Very interestingly, some construction and partial classifications of {\it infinite P-dimensional} representations appear in the literature and thus go beyond our results: see \cite{olesen2016thompson, Bergmann-Conti03, Kawamura-05} for the diffuse case and \cite{Bratteli-Jorgensen-19, DHJ-atomic15, Bergmann-Conti03, Kawamura-05} for the atomic cases.

Additionally, we consider the {\it 2-adic ring C*-algebra of the integers} $\cQ$ which is a C*-algebra that contains $\cO$ and whose study was initiated by Larsen and Li \cite{Larsen-Li-12}.
We prove in the diffuse case (in fact, the result holds for an even larger class, see Remark \ref{rem:2-adic}) that the representation theories of $\cO$ and $\cQ$ are identical (see Theorem \ref{theo:O-Q}). 

\begin{letterthm}\label{ltheo:2-adic}
	The categories $\Rep_{\diff}(\cO)$ and $\Rep_{\diff}(\cQ)$ are isomorphic.
\end{letterthm}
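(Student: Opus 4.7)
The plan is to show that the natural restriction functor $R : \Rep(\cQ) \to \Rep(\cO)$ coming from the inclusion $\cO \subset \cQ$ restricts to an \emph{isomorphism} of categories $\Rep_{\diff}(\cQ) \to \Rep_{\diff}(\cO)$. Since $R$ acts as the identity on underlying Hilbert spaces and on linear maps, this reduces to three claims: (a) $R$ preserves the diffuse property; (b) every diffuse $\cO$-representation admits an extension to $\cQ$; and (c) the extension is unique, and every $\cO$-intertwiner between two diffuse extended representations is automatically a $\cQ$-intertwiner. Point (a) is immediate from the intrinsic character of the atomic/diffuse decomposition, which is defined at the level of the underlying P-module of the $\cO$-representation and does not see the enlargement from $\cO$ to $\cQ$.

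For (b), given a diffuse $\sigma: \cO \act \scrH$, I would write $\sigma = \Pi^\cO(A,B,\fH)$ for a diffuse P-module and exploit the direct limit description of $\scrH$ indexed by the standard dyadic partitions supplied by Jones' technology. The extra generator of $\cQ$ beyond $\cO$ is a unitary implementing the binary-increment map on the spectrum of the Cartan subalgebra; on each finite level of the direct limit this corresponds to cyclically permuting the leaves of the standard dyadic tree. These level-wise unitaries can then be assembled into a bounded operator $U \in B(\scrH)$, and the diffuse hypothesis is precisely what ensures that the limiting procedure is well-defined: in the atomic case the increment would carry point masses off the Cantor space and the construction would fail in a way classified in \cite{BW23}, whereas diffuseness removes this obstruction.

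For (c), suppose $U_1, U_2 \in B(\scrH)$ both extend $\sigma$. Then $V := U_1 U_2^*$ commutes with $\sigma(\cO)$. Applying Theorem \ref{thm:rigidity-category}(3) to pass to the associated diffuse $F$-representation, one reduces uniqueness to analysing intertwiners at the level of P-modules, where the explicit local formula for $U$ on each dyadic level forces $V$ to be the identity. The same argument applied to any $\cO$-intertwiner $\Theta : \scrH_1 \to \scrH_2$ shows that it automatically satisfies $\Theta U_1 = U_2 \Theta$ since both sides agree on the dense dyadic subspace. The main obstacle is the rigorous execution of step (b): the natural formal series for $U$ does not converge in operator norm, so one must carefully assemble $U$ from the direct limit structure and use the diffuse hypothesis to prove boundedness. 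This is the technical heart of the argument, and it is also the place where the hypothesis can presumably be weakened to the larger class alluded to in Remark \ref{rem:2-adic}.
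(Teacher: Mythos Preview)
Your overall plan coincides with the paper's: use the forgetful functor $\Rep_{\diff}(\cQ)\to\Rep_{\diff}(\cO)$ and build an explicit inverse by constructing the extra unitary $U$ from the data $\tau_0,\tau_1$. Two points need sharpening.

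In (b), the level-$n$ cyclic permutations of leaves are \emph{not} compatible with the connecting maps of the direct system: the wraparound leaf $0^n\mapsto 1^n$ at level $n$ does not lift correctly to level $n+1$, so you cannot ``assemble'' $U$ as a direct-limit operator. What does work (and is what the paper does) is to take the SOT limit. Concretely, the paper writes
\[
U=\tau_0^*\tau_1+\lim_{n\to\infty}\sum_{k=1}^{n}\tau_{1^k0}^*\,\tau_{0^k1},
\]
and diffuseness (in fact merely $\rho_{0^n},\rho_{1^n}\xrightarrow{s}0$) gives convergence and unitarity. Your description is morally this, but the phrase ``assembled from the direct limit'' misidentifies the mechanism.

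In (c), the step ``$V:=U_1U_2^*$ commutes with $\sigma(\cO)$'' is not justified by the $\cQ$ relations (one gets $V\tau_0^*=\tau_0^*$, but not $[V,\tau_0^*]=0$), and invoking Theorem~\ref{thm:rigidity-category}(3) does not help. The clean argument, which the paper uses and which also dispatches the intertwiner claim, is to observe that \emph{any} unitary $U$ extending a fixed $\sigma^\cO$ must satisfy the recursion $U=\tau_0^*\tau_1+\tau_1^*U\tau_0$ (from $U\tau_1^*=\tau_0^*$ and $\tau_1^*U=U^2\tau_1^*$), which under the diffuse hypothesis iterates to the explicit SOT formula above, independent of $U$. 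Hence the extension is unique, and since $U$ is a canonical SOT-limit of polynomials in $\tau_0,\tau_1,\tau_0^*,\tau_1^*$, every $\cO$-intertwiner automatically intertwines $U$. This replaces your commutant detour entirely.
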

This theorem implies partial answers to various open questions regarding $\cQ$.
Finally, we note that none of the representations of $J$ are Pythagorean. 
Moreover, we list explicit examples of irreducible monomial representations that are not built from the parabolic subgroups and thus are not Pythagorean. 

\noindent
\underline{\bf Plan of the article.}
Section \ref{sec:preliminaries} recalls the definitions of the three Thompson groups $F,T,V$, of the Cuntz algebra $\cO$, and the embedding $V\into \cO$ of Birget and Nekrashevych.

Section \ref{sec:pythagorean} introduces the Pythagorean algebra $\cP$ and explains how to promote representations of $\cP$ into representations of $F,T,V$ and additionally to representations of $\cO$. 
We introduce P-modules and various properties of those.
We recall the notion of atomic and diffuse P-modules, and key results from our previous articles.

Section \ref{sec:isomorphism-diffuse} is short but reveals the importance of the notion of {\it diffuseness}. 
We prove that $F$-intertwinners between diffuse representations of $\cO$ are in fact $\cO$-intertwinners providing isomorphisms (not only equivalences) of the full subcategories of diffuse representations. This gives item 4 of Theorem \ref{thm:rigidity-category} which allows us to extend certain subsequent results concerning $\cO$ to also $F,T,V$.

Section \ref{sec:classification} is the most important of the article: we introduce the Pythagorean dimension (Definition \ref{ldef:dimension}) and prove Theorem \ref{thm:smallest-module} (first in the diffuse case).
From there we deduce a practical way for computing the P-dimension and for determining intertwinner spaces of Pythagorean representations.
We complete this result by considering the atomic case and deduce similar statements using our previous classification of \cite{BW23}.
This proves Theorem \ref{thm:rigidity} and items 1,2,3 of Theorem \ref{thm:rigidity-category}.

Section \ref{sec:mani-pyth-rep} interprets our main result for a given P-dimension. Using classical results of differential geometry we deduce Theorems \ref{thm:manifold-diffuse} and \ref{thm:manifold-atomic}. 

Section \ref{sec:examples} confronts our classification and production of irreducible representations with previous work in the literature. Additionally, we briefly present the 2-adic ring C*-algebra of the integers and prove Theorem \ref{ltheo:2-adic}.

\subsection*{Acknowledgments}
The first author warmly thanks Matt Brin, Pierre de la Harpe and Roberto Conti for constructive and pleasant exchanges, and for pointing out relevant references. We also thank Matt Brin for helping to improve a previous version of this manuscript.
Finally, we thank the referee for his comments that have improved the quality of this article.


\section{Preliminaries}\label{sec:preliminaries}
We briefly present the three groups $F,T,V$ of Richard Thompson, the Cuntz algebra $\cO$ and how the firsts embed in the second.
For details we refer to the expository article of Cannon, Floyd and Parry concerning $F,T,V$, the original article of Cuntz and the recent survey of Aiello, Conti and Rossi for $\cO$, and the article of Nekrashevych for the connection between them \cite{Cannon-Floyd-Parry96, Cuntz77, Aiello-Conti-Rossi21, Nekrashevych04}.

\noindent
{\bf Conventions.}
All along the article we only consider Hilbert spaces over the field of complex numbers $\C$. 
A direct sum of Hilbert spaces is always assumed to be orthogonal and denoted $\fH_1\oplus\fH_2$.
All representations of groups are unitary and all representations of C*-algebras are unital and respect the *-structure.

\subsection{The three Thompson groups}
Thompson's group $F$ is the group of piecewise affine homeomorphisms of the unit interval $[0,1]$ with slopes powers of $2$ and having finitely many breakpoints appearing at dyadic rationals $\Z[1/2]$.
Elements of $F$ are completely described by a pair of standard dyadic partitions (in short sdp) that are finite partitions of $[0,1)$ made of standard dyadic intervals (in short sdi) of the form $[\frac{a}{2^n},\frac{a+1}{2^n})$ with $a,n\in \N$ natural numbers (here we follow the convention that $\N$ contains $0$).
Given two sdp's $(I_1,\dots,I_n)$ and $(J_1,\dots,J_n)$ with same number of intervals we consider the unique increasing piecewise affine map sending $I_j$ onto $J_j$ for all $1\leq j\leq n$.
There is a rather obvious bijection between sdp's and rooted finite binary trees (that we simply called trees) where each leaf corresponds to a sdi.
Hence, an element of $F$ is described by a pair of trees $(t,s)$ with same number of leaves. 
Trees are identified with isotopy classes of planar diagrams with roots on top and leaves on bottom.
The correspondence between elements of $F$ and tree-pairs is not one-to-one but can be made so by considering classes $[t,s]$ with respect to the equivalence relation generated by 
$$(t,s)\sim(f\circ t,f\circ s)$$
where $f$ is a forest and $f\circ t$ symbolises the forest $f$ stacked on bottom of $t$ (the $j$-th root of $f$ is lined with the $j$-th leaf of $t$). 

Thompson group $T$ is defined similarly but acting by homeomorphisms on the unit torus $\R/\Z$ or $[0,1]/0\sim 1$.
An element of $T$ is described by a triple $(t,\kappa,s)$ where $(t,s)$ is a pair of trees as above and $\kappa$ is a cyclic permutation of the leaves of $t$ (identified with a bijection from the leaves of $s$ to the leaves of $t$).
The triple $(t,\kappa,s)$ encodes the map that sends in an increasing affine manner $I_j$ onto $J_{\kappa(j)}$ for all $1\leq j\leq n$.
It is in $F$ if and only if $\kappa$ is the identity. 

Thompson group $V$ is defined similarly but allowing any permutation $\kappa$ not just only the cyclic ones. It acts on $[0,1)$ but may have finitely many discontinuities occurring at dyadic rationals.
Although, it acts by homeomorphism on the Cantor set $\cC:=\{0,1\}^{\N^*}$ of infinite binary strings (i.e.~infinite words in the letters $0$ and $1$).
The action consists in changing finite prefixes.
Note that we have the chain of inclusions 
$$F\subset T\subset V.$$
\noindent
{\bf Nota Bene.} We will often identify $\cC$ with the boundary of the rooted infinite complete binary tree $t_\infty$. 
Hence, an infinite binary string is often called a ray.
Further, we identify vertices of $t_\infty$, forming the set $\Ver$, with finite binary string in a similar manner and with leaves of finite rooted binary trees.

Using these identifications it is not hard to express a pair of trees $(t,s)$ as a homeomorphism of the Cantor space.

\subsection{The Cuntz algebra}

\subsubsection{Definition}

The Cuntz algebra is the universal C*-algebra $\cO$ for the presentation:
$$\langle s_0,s_1| s_0s_0^*+s_1s_1^*=s_0^*s_0=s_1^*s_1=1\rangle.$$
It is usually denoted $\cO_2$ (and for any $n\geq 2$ there is a corresponding $\cO_n$ where there are $n$ generators rather than 2) but since we will solely consider $\cO_2$ we will drop the subscript.

Being a C*-algebra means that $\cO$ is an algebra over the field of complex numbers that is equipped with an anti-linear involution $*:\cO\to\cO$ and a sub-multiplicative norm $\|\cdot\|$ satisfying that $\|xx^*\|=\|x\|^2$ for all $x\in \cO$ and for which $*$ is isometric and $\cO$ is complete.

Being a ``universal C*-algebra'' for the presentation of above means that representations of $\cO$ are in one to one correspondence with triple $(S_0,S_1,\fH)$ where $S_0,S_1$ are bounded linear operators acting on a complex Hilbert space $\fH$ and satisfying that 
$$S_0S_0^*+S_1S_1^*=S_0^*S_0=S_1^*S_1=\id_\fH.$$
Equivalently, the representations of $\cO$ are the continuous unital *-algebra morphisms $\cO\to B(\fH)$ where $B(\fH)$ denotes the *-algebra of all bounded linear operators from $\fH$ to $\fH$ equipped with its usual operator norm and the involution $*$ being the adjoint.

If $\mu$ is a finite binary string of $0,1$, then we write $s_\mu$ for the corresponding product respecting the order of the letters (hence if $\mu=011$, then $s_\mu=s_0s_1s_1$).
If $s_i\mapsto S_i,i=0,1$ is a representation, then we write $S_\mu$ for the image of $s_\mu$.

\subsubsection{The standard Cartan subalgebra}
Consider the C*-subalgebra $\cA$ of $\cO$ equal to the completion of the span of the set of projections $s_\mu s_\mu^*$ where $\mu$ is any finite word.
Observe that $\cA$ is abelian and in fact is a {\it maximal} abelian subalgebra (any intermediate algebra between $\cA$ and $\cO$ is either non-abelian or equal to $\cA$).
Moreover, it is a {\it regular} subalgebra of $\cO$, i.e.~the normaliser 
$$N_\cO(\cA):=\{ g\in \cO:\ g\cA g^*=\cA, gg^*=g^*g=1\}$$
generates $\cO$ as a C*-algebra, see for instance \cite{Aiello-Conti-Rossi18} for a proof. 
Hence, by definition, $\cA$ is a {\it Cartan subalgebra} of $\cO$ that we call {\it the standard Cartan subalgebra} of $\cO$.

\subsection{Thompson's group $V$ inside the Cuntz algebra}\label{sec:V-inside-Cuntz}
We consider $V$ as a group of homeomorphisms of the Cantor space $\cC:=\{0,1\}^{\N^*}$.
An element $g\in V$ is defined by two lists $(\nu_1,\cdots,\nu_m)$ and $(\mu_1,\cdots,\mu_m)$ of finite binary strings satisfying that
$$g: \mu_j\cdot w\mapsto \nu_j\cdot w \text{ for all } 1\leq j\leq m \text{ and infinite word } w.$$
Necessarily, each list defines a standard partition of $\cC$.
Birget and Nekrashevych made the fundamental observation that $V$ can be embedded in the unitary group of $\cO$ \cite{Birget04,Nekrashevych04}.
More precisely, the map 
$$\iota:V\to \cO, g\mapsto \sum_{j=1}^m s_{\nu_j} s_{\mu_j}^*$$
realises a group isomorphism from $V$ to $N_{\cO}(\cA).$

\begin{center}{\bf From now on we identify $V$ with its image inside $\cO$.}\end{center}

\section{The Pythagorean algebra and Pythagorean representations}\label{sec:pythagorean}

We recall various definitions and results concerning the Pythagorean algebra and the related Pythagorean  representations. Additionally, we introduce the novel notion of Pythagorean modules. 

\subsection{The Pythagorean algebra}

\subsubsection{A universal C*-algebra}

Define $\cP$ to be the universal C*-algebra generated by two elements $a,b$ satisfying the so-called {\it Pythagorean equality}:
$$a^*a+b^*b=1.$$
We call $\cP$ the {\it Pythagorean algebra} (in short P-algebra).

\subsubsection{Representations of the C*-algebra $\cP$}

As previously explained for $\cO$, the representations of $\cP$ are the unital *-algebra morphisms $\pi:\cP\to B(\fH)$ which corresponds to pairs $(A,B)$ of $B(\fH)$ satisfying $A^*A+B^*B=\id_\fH$.
The class of all representations of $\cP$ forms a category $\Rep(\cP)$ whose morphisms are the {\it intertwinners}, i.e.~if $(\pi,\fH)$ and $(\pi',\fH')$ are representations of $\cP$, then a morphism from $\pi$ to $\pi'$ is a bounded linear map $\theta:\fH\to\fH'$ satisfying that 
\begin{equation}\label{eq:intert}\pi'(x)\circ \theta = \theta\circ \pi(x) \text{ for all } x\in \cP.\end{equation}
Equivalently, a bounded linear map $\theta:\fH\to\fH'$ is an intertwinner when the equation of above is satisfied for $x=a,b,a^*$ and $b^*$.

\subsubsection{Pythagorean module}\label{sec:P-module}

For our purpose we desire having more intertwinners by only checking the intertwinning conditions for $a$ and $b$, and not for their adjoints $a^*,b^*$.
Moreover, we want to consider subspaces of $\fH$ that are closed under only $A,B$ rather than closed under the whole C*-algebra $\cP$ (that is closed under $A,B,A^*$ and $B^*$).
In order to do that we introduce {\it Pythagorean modules}. 

\noindent
{\bf P-module.}
A {\it Pythagorean module} (in short P-module) is a triple $m=(A,B,\fH)$ where $\fH$ is a Hilbert space, $A,B\in B(\fH)$ are bounded linear operators acting on $\fH$ satisfying that $A^*A+B^*B=\id_\fH$. Hence, it is the same data as a representation of $\cP$.

\noindent
{\bf Intertwinner and equivalence.}
An {\it intertwinner} or a {\it morphism} from the P-module $m=(A,B,\fH)$ to the P-module $m'=(A',B',\fH')$ is a bounded linear map $\theta:\fH\to\fH'$ satisfying that 
$$\theta\circ A=A'\circ \theta \text{ and } \theta\circ B=B'\circ \theta.$$
We may write $\theta:(A,B,\fH)\to (A',B',\fH')$ or $\theta:m\to m'$ to express that $\theta$ is an intertwinner of P-modules (and not only a bounded linear map).
Two P-modules $(A,B,\fH)$, $(A',B',\fH')$ are {\it unitarily equivalent} or {\it isomorphic} if there exists a unitary intertwinner $U : (A,B,\fH)\to (A',B',\fH')$. 
We may drop the term ``unitarily''.

\noindent
{\bf Sub-module.}
A {\it sub-module} of $(A,B,\fH)$ is a Hilbert subspace $\fH'\subset\fH$ closed under the action of $A$ and $B$. We equip $\fH'$ with the P-module structure obtained by restricting $A$ and $B$ to $\fH'$.

\noindent
{\bf Full and complete sub-modules and residual subspace.}
Consider a P-module $(A,B,\fH)$ and decompose $\fH$ as the (orthogonal) direct sum $\fH_0\oplus\fZ$ assuming that $\fH_0$ is a sub-module and $\fZ$ is a vector subspace which does not contain any nontrivial sub-module.
We say that 
\begin{itemize}
\item $\fH_0$ is a {\it complete sub-module} of $\fH$ and 
\item $\fZ$ is a {\it residual subspace} of $\fH$.
\end{itemize}
If for all such decompositions we have that $\fZ=\{0\}$, then we say that $\fH$ is {\it full}.
Hence, a P-module is full if and only if it does not contain any proper complete sub-modules (equivalently, it does not contain any nontrivial residual subspace).

\noindent
{\bf Decomposability and reducibility.}
A P-module $(A,B,\fH)$ is {\it decomposable} if it {\it contains} the direct sum of two nontrivial ones.
It is indecomposable otherwise.
A P-module $(A,B,\fH)$ is {\it reducible} if it admits a nontrivial sub-module and irreducible otherwise.

\noindent
{\bf Category of P-modules.}
We write $\Mod(\cP)$ for the category of P-module with intertwinners for morphisms. 

\begin{remark}
\begin{enumerate}
\item We have defined $m=(A,B,\fH)$ to be decomposable when $m_1\oplus m_2\subset m$ with $m_i$ nontrivial sub-modules. This is strictly weaker than asking $m=n_1\oplus n_2$ with $n_i$ nontrivial sub-modules.
Indeed, consider the example with $\fH = \C^3$ so that $Be_1 = e_1$, $Ae_2 = e_2$, $Ae_3 = 1\sqrt{2}e_1$ and $Be_3 = 1/\sqrt{2}e_2$ (and necessarily $Ae_1=0=Be_2$) where $(e_1,e_2,e_3)$ is the standard basis. 
It is decomposable in our sense since $\C e_1$ and $\C e_2$ are sub-modules. Moreover, note that $\C e_3$ is a residual subspace.
However, $\fH$ is {\it not equal} to the direct sum of two nontrivial sub-modules since $\C e_1\oplus \C e_3$ and $\C e_2\oplus \C e_3$ are not sub-modules.
\item Note that $\Rep(\cP)$ and $\Mod(\cP)$ have same class of objects but the morphism spaces of $\Mod(\cP)$ are larger than the one of $\Rep(\cP)$. 
The example of the previous item yields an irreducible representation of $\Rep(\cP)$ but a reducible (and even decomposable) P-module.
Although, two P-modules are unitarily equivalent if and only if their associated representations in $\Rep(\cP)$ are unitarily equivalent.
\item Unlike for representations of $\cP$, a P-module may be indecomposable but reducible.
For example, take $\fH = \C^2$ and let $Ae_1=Ae_2 = e_1$ and (necessarily) $B = 0$. Then $(A,B,\C^2)$ forms a P-module which reduces to $\C e_1 \oplus \C e_2$ where $\C e_1$ is a sub-module and $\C e_2$ a residual subspace.
We will see later that the {\it Pythagorean dimension} of this module (or its associated representation of $F$) is equal to $1$ (and not equal to the usual dimension of $\fH$).
\item If we restrict to {\it separable} P-modules (i.e.~$\fH$ is a separable Hilbert space), then since $\cP$ is separable we still obtain all the {\it irreducible} P-modules. Moreover, the category becomes small, i.e.~its collections of objects and morphisms are sets.
This permits to do all our analysis using only set theory. 
\end{enumerate}
\end{remark}

\subsection{Pythagorean representations}\label{sec:P-rep}
We briefly recall what are Pythagorean representations and refer to \cite{Brothier-Jones, BW22,BW23} for extensive details. 
Additionally, the reader may find it instructive to compare this construction with the one of \cite{Brothier-group1,Brothier-group2} where Hilbert spaces are replaced by discrete groups.

\subsubsection{A larger Hilbert space}
Consider a P-module $(A,B,\fH)$.
For any (finite rooted binary complete) tree $t$ with set of leaves $\Leaf(t)$ we consider the Hilbert space $\fH_t$ of all maps from $\Leaf(t)$ to $\fH$ equipped with the inner product
$$\langle \xi,\eta\rangle:=\sum_{\ell\in\Leaf(t)}\langle \xi_\ell,\eta_{\ell}\rangle,\ \xi,\eta\in\fH_t.$$
We often interpret an element of $\fH_t$ as the tree $t$ whose leaves are decorated by elements of $\fH$.
Moreover, we may write $\xi_j$ instead of $\xi_\ell$ when $\ell$ is the $j$-th leaf of $t$ counting from left to right and starting at $1$.
Consider a larger tree $s=f\circ t$ where $f$ is a forest.
Now, $(A,B)$ defines a linear isometry $\iota_t^s:\fH_t\to\fH_{f\circ t}$ as follows.
If $f$ has a unique caret (we say that $f$ is an elementary forest) at its $j$-th root, then 
$$\iota_t^s(\xi)_j:=A\xi_j \text{ and } \iota_t^s(\xi)_{j+1}=B\xi_j$$ and the other component of the vector are unchanged up to an obvious shift of indices.
For instance:
\begin{center}
	\begin{tikzpicture}[baseline=0cm, scale = 1]
		\draw (0,0)--(-.5, -.5);
		\draw (0,0)--(.5, -.5);		
		\node[label={[yshift=-22pt] \normalsize $\xi_1$}] at (-.5, -.5) {};
		\node[label={[yshift=-22pt] \normalsize $\xi_2$}] at (.5, -.5) {};
		\node[label={[yshift= -3pt] \normalsize $\iota_t^s$}] at (1.2, -.7) {$\mapsto$};
	\end{tikzpicture}%
	\begin{tikzpicture}[baseline=0cm, scale = 1]
		\draw (0,0)--(-.5, -.5);
		\draw (0,0)--(.5, -.5);
		\draw (-.5, -.5)--(-.9, -1);
		\draw (-.5, -.5)--(-.1, -1);
		\node[label={[yshift=-22pt] \normalsize $A\xi_1$}] at (-.9, -1) {};
		\node[label={[yshift=-22pt] \normalsize $B\xi_1$}] at (-.1, -1) {};
		\node[label={[yshift=-22pt] \normalsize $\xi_2$}] at (.5, -.5) {};		
	\end{tikzpicture}%
\end{center}
where $t = \wedge$ and $s = f\circ t$ with $f= \wedge \otimes \vert$ (the symbol $\vert$ denotes the trivial forest, $\wedge$ the unique tree with 2 leaves and $\otimes$ is the horizontal concatenation of forests).
For general forest $f$ (having possibly more than one caret) we decompose $f$ as a finite composition of elementary ones and define $\iota_t^{f\circ t}$ to be the corresponding finite composition of isometries. The decomposition of the forest $f$ may not be unique but each of them yields the same isometry.

Write $\fT$ for the set of all trees and write $\leq$ for the pre-order such that $t\leq s$ if $t$ is a rooted subtree of $s$.
We have defined a direct system of Hilbert spaces 
$$(\fH_t, \iota_t^s:\ t\leq s\text{ trees })$$
where the connecting maps are linear isometries and whose limit $\scrK=\scrK_{A,B}$ is a preHilbert space that we complete into the Hilbert space $$\scrH=\scrH_{A,B}=\varinjlim_{t\in\fT}\fH_t.$$
An element of $\fH_t$ is often written $(t,\xi)$ rather than $\xi$ in order to insist on the tree $t$.
Its class inside $\scrH$ is written as $[t,\xi]$.
Note that 
$$\fH_t\into \scrH, (t,\xi)\mapsto [t,\xi]$$ 
is an isometry and we identify $\fH_t$ with its image inside $\scrH$.
If $I$ is the tree with one leaf (the trivial tree), then note that $\fH_I$ is isomorphic to $\fH$ in an obvious way.
{\bf We will constantly identify $\fH_t$ with its copy inside $\scrH$ and $\fH$ with $\fH_I$.}

\begin{notation}
Summarising notations and choices of mathematical symbols:
we often write $(A,B,\fH)$ for a P-module, $t$ for a tree, $\fH_t$ for the Hilbert space of all maps from $\Leaf(t)$ to $\fH$, $\scrK$ for the preHilbert space equal to the inductive limit of the $\fH_t$ and $\scrH$ for the Hilbert space equal to the completion of $\scrK$.
\end{notation}

\begin{remark}
Note that any forest $f$ defines a map $\iota_t^{f\circ t}$ for $t$ a tree that is composable with $f$ (i.e.~the number of leaves of $t$ is equal to the number of roots of $f$).
We often write $$(t,\xi)\mapsto (f\circ t, \Phi_{A,B}(f)(\xi))$$ for the isometry $\iota_t^{f\circ t}:\fH_t\to\fH_{f\circ t}$.
Hence, $\Phi_{A,B}(f)$ is an isometry from $\fH^n$ to $\fH^m$ where $n$ is the number of roots of $f$ and $m$ its number of leaves.
In fact, $$\Phi=\Phi_{A,B}:\cF\to\Hilb$$ defines a covariant functor from the category of binary forests to the category of Hilbert space where this later has isometries for morphisms.
Better, this functor is {\it monoidal} with respect to the horizontal concatenation of forests and the direct sum of Hilbert spaces.
\end{remark}

\subsubsection{Partial isometries}\label{sec:partial-isometries}
Our analysis of Pythagorean representations exploits the very useful family of partial isometries $\tau_\nu \in B(\scrH)$ where $\nu$ runs over the vertices of the infinite rooted binary regular tree or equivalently the finite binary strings.
We shall be brief in their description and refer the reader to \cite{BW22} (see Subsection 2.1) where these partial isometries were initially defined.	 
Informally, $\tau_\nu$ can be described as follows.

Fix a vertex $\nu$ and consider an element $[t, \xi] \in \scrK$. By taking representatives of $[t, \xi]$ we can ensure that $\nu$ is a vertex of $t$ up to taking a larger tree. Define $t_\nu$ to be the sub-tree of $t$ with root $\nu$ and whose leaves coincide with the leaves of $t$. Then set $\tau_\nu([t, \xi]) := [t_\nu, \eta]$ where $\eta$ is the decoration of the leaves of $t$ in $[t, \xi]$ that coincide with the leaves of $t_\nu$. The map $\tau_\nu$ is well-defined and extends to a surjective partial isometry from $\scrH$ onto itself. 
Hence, its adjoint $\tau_\nu^*$ is an isometry and more generally we shall consider the partial isometries $\tau^*_\nu\tau_\omega$.

These maps can be diagrammatically described. For example, the map $\tau_\nu$ ``grows'' the tree sufficiently large and then ``snips'' the tree at $\nu$.  To illustrate, below demonstrates the computation of $\tau_{01}([\wedge, (\xi_1, \xi_2)])$.

\begin{center}
	\begin{tikzpicture}[baseline=0cm, scale = 1]
		\draw (0,0)--(-.5, -.5);
		\draw (0,0)--(.5, -.5);
		
		\node[label={[yshift=-22pt] \normalsize $\xi_1$}] at (-.5, -.5) {};
		\node[label={[yshift=-22pt] \normalsize $\xi_2$}] at (.5, -.5) {};
		
		\node[label={[yshift= -3pt] \normalsize $\Phi_{A,B}(f_1)$}] at (1.7, -.7) {$\sim$};
	\end{tikzpicture}%
	\begin{tikzpicture}[baseline=0cm, scale = 1]
		\draw (0,0)--(-.5, -.5);
		\draw (0,0)--(.5, -.5);
		\draw (-.5, -.5)--(-.9, -1);
		\draw (-.5, -.5)--(-.1, -1);
		
		\node[label={[yshift=-22pt] \normalsize $A\xi_1$}] at (-.9, -1) {};
		\node[label={[yshift=-22pt] \normalsize $B\xi_1$}] at (-.1, -1) {};
		\node[label={[yshift=-22pt] \normalsize $\xi_2$}] at (.5, -.5) {};
		
		\node[label={[yshift= -3pt] \normalsize $\tau_{01}$}] at (1.4, -.7) {$\longmapsto$};
	\end{tikzpicture}%
	\begin{tikzpicture}[baseline=0cm, scale = 1]
		\node[label={[yshift=-25pt] \normalsize $B\xi_1$}] at (0, -.5) {$\bullet$};
	\end{tikzpicture}.%
\end{center}

More generally, the map $\tau^*_\nu\tau_\omega$ ``snips'' the tree at $\omega$ and then ``attaches'' the resulting sub-tree, along with its components, at the vertex $\nu$ while setting all other components to zero. A computation of $\tau^*_0\tau_1$ is shown below.
	\begin{center}
		\begin{tikzpicture}[baseline=0cm, scale = 1]
			\draw (0,0)--(-.5, -.5);
			\draw (0,0)--(.5, -.5);
			\draw[thick] (.5, -.5)--(.1, -1);
			\draw[thick] (.5, -.5)--(.9, -1);
			
			\node[label={[yshift=-22pt] \normalsize $\xi_1$}] at (-.5, -.5) {};
			\node[label={[yshift=-22pt] \normalsize $\xi_2$}] at (.1, -1) {};
			\node[label={[yshift=-22pt] \normalsize $\xi_3$}] at (.9, -1) {};
			
			\node[label={[yshift= -3pt] \normalsize $\tau_1$}] at (1.7, -.7) {$\longmapsto$};
		\end{tikzpicture}%
		\begin{tikzpicture}[baseline=0cm, scale = 1]
			\draw[thick] (0,-.3)--(-.5, -.8);
			\draw[thick] (0,-.3)--(.5, -.8);
			
			\node[label={[yshift=-22pt] \normalsize $\xi_2$}] at (-.5, -.8) {};
			\node[label={[yshift=-22pt] \normalsize $\xi_3$}] at (.5, -.8) {};
			
			\node[label={[yshift= -3pt] \normalsize $\tau^*_{0}$}] at (1.4, -.7) {$\longmapsto$};
		\end{tikzpicture}%
		\begin{tikzpicture}[baseline=0cm, scale = 1]
			\draw (0,0)--(-.5, -.5);
			\draw (0,0)--(.5, -.5);
			\draw[thick] (-.5, -.5)--(-.9, -1);
			\draw[thick] (-.5, -.5)--(-.1, -1);
			
			\node[label={[yshift=-22pt] \normalsize $\xi_2$}] at (-.9, -1) {};
			\node[label={[yshift=-22pt] \normalsize $\xi_3$}] at (-.1, -1) {};
			\node[label={[yshift=-22pt] \normalsize $0$}] at (.5, -.5) {};		
		\end{tikzpicture}%
	\end{center}
\begin{notation} We define the orthogonal projection
\[\rho_\nu := \tau^*_\nu\tau_\nu\]
which sets all components that are not children of $\nu$ to zero. 
\end{notation}
Hence, if $\{\nu_i\}_{i=1}^n$ is a sdp then $\sum_{i=1}^n\rho_{\nu_i} = \id$. This gives the following easy but important observation.

\begin{observation} \label{obs:scrH-P-module}
	The triplet $(\tau_0, \tau_1, \scrH)$ forms a P-module since we have
	\[\tau^*_0\tau_0 + \tau^*_1\tau_1 = \rho_0 + \rho_1 = \id.\]
	Further, observe that if $\xi \in \fH$ then
	\[\tau_0(\xi) = A\xi \text{ and } \tau_1(\xi) = B\xi.\]
	Hence, $(A,B,\fH)$ is a sub-module of $(\tau_0, \tau_1, \scrH)$. 
\end{observation}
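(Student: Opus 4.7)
The plan is to verify directly the two assertions composing the observation. The first asserts that $(\tau_0,\tau_1,\scrH)$ satisfies the Pythagorean equality \eqref{eq:P}; the second asserts that restricting $\tau_0,\tau_1$ to the copy $\fH=\fH_I \subset \scrH$ recovers $A$ and $B$. Both facts will follow almost immediately from the properties of the partial isometries $\tau_\nu$ and of the functor $\Phi_{A,B}$ recalled in Subsections \ref{sec:P-rep} and \ref{sec:partial-isometries}, so the argument is essentially a bookkeeping verification.

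For the first part, since $\tau_\nu$ is a partial isometry on $\scrH$ by construction, $\rho_\nu := \tau_\nu^*\tau_\nu$ is automatically the orthogonal projection onto its initial space. I would then invoke the already recalled identity that for any standard dyadic partition $\{\nu_i\}_{i=1}^n$ of $\cC$ one has $\sum_{i=1}^n \rho_{\nu_i} = \id_{\scrH}$. Applied to the smallest nontrivial sdp $\{0,1\}$ (corresponding to the tree $\wedge$) this gives $\tau_0^*\tau_0 + \tau_1^*\tau_1 = \rho_0 + \rho_1 = \id_{\scrH}$, which is exactly the Pythagorean equality for the triple $(\tau_0,\tau_1,\scrH)$.

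For the second part, fix $\xi\in\fH$ and identify it with $[I,\xi] \in \scrH$ via the canonical isometric embedding $\fH_I\into\scrH$. To evaluate $\tau_0(\xi)$ I would follow the informal description of $\tau_\nu$ from Subsection \ref{sec:partial-isometries}: first grow the tree so that $0$ is a vertex, then snip. Since $0$ is not a vertex of $I$, I replace $[I,\xi]$ by the equivalent representative $[\wedge,\,\Phi_{A,B}(\wedge)(\xi)] = [\wedge,(A\xi,B\xi)]$ obtained from the single-caret elementary forest, whose image under $\iota_I^{\wedge}$ is precisely $(A\xi,B\xi)$ by the definition of $\Phi_{A,B}$. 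The vertex $0$ is now a leaf of $\wedge$, and snipping at it yields $[I,A\xi]$, i.e.~$A\xi$ under the identification $\fH_I\simeq \fH$. The symmetric computation at the vertex $1$ gives $\tau_1(\xi)=B\xi$. Consequently $\fH$ is stable under both $\tau_0$ and $\tau_1$ and the restrictions coincide with $A$ and $B$, so $(A,B,\fH)$ is a sub-module of $(\tau_0,\tau_1,\scrH)$. I do not expect any genuine obstacle: the only technical point is the well-definedness of the calculation on equivalence classes $[t,\xi]$, which is guaranteed by the functoriality of $\Phi_{A,B}$ and the compatibility of the $\tau_\nu$'s with the connecting isometries $\iota_t^s$.
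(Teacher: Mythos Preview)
Your proposal is correct and follows exactly the approach implicit in the paper: the observation is stated without an explicit proof there, since the first part is an immediate application of the identity $\sum_{i=1}^n\rho_{\nu_i}=\id$ for the sdp $\{0,1\}$ recorded just before the statement, and the second part is precisely the growing-then-snipping computation illustrated diagrammatically in Subsection~\ref{sec:partial-isometries}. There is nothing to add.
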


\subsubsection{Actions of the Thompson groups and of the Cuntz algebra} \label{subsubsec:pyth-rep}

Using the P-module $(A,B,\fH)$ and the associated larger Hilbert space $\scrH$ we will define representations of the groups $F,T,V$ and of the Cuntz algebra $\cO$.

\noindent
{\bf Actions of Thompson's group $F$.}
Consider $g\in F$ and $\xi\in\scrK$. There exists a pair of trees $(t,s)$ with same number of leaves such that $g=[s,t]$.
Recall that $\scrK$ is the dense subspace of $\scrH$ equal to the union of all the $\fH_z$ with $z$ a tree.
Hence, there exists a tree $z$ so that $\xi\in\fH_z$. We write $\xi=[z,\eta]$ to emphasize the tree $z$ where $\eta:\Leaf(z)\to \fH$ is a map.
There exists some forests $p,q$ satisfying that $p\circ z=q\circ t$ (this is the so-called left-Ore condition).
In particular, $$\xi=[z,\eta]=[p\circ z, \Phi(p)(\eta)] \text{ and } g=[s,t] = [q\circ s,q\circ t].$$
We set 
$$g\cdot \xi:= [q\circ s, \Phi(p)(\eta)].$$
Using that the composition of forests is a cancellative operation (in fact it suffices to be weakly right-cancellative) we easily deduce that this operation is well-defined (it does not depend on the choice of $(s,t)$ nor of $z$ nor of $(p,q)$) and defines an action of $F$ on $\scrK$.
This actions uniquely extends into a unitary representation
$$\sigma=\sigma_{A,B}:F\to \U(\scrH)$$
on the completion $\scrH$ of $\scrK$.

\begin{definition}
The map $\sigma$ is the \emph{Jones representation} associated to the functor $\Phi$ also called the \emph{Pythagorean} representation (in short P-representation) associated to $(A,B,\fH)$.
\end{definition}

\noindent
{\bf Extension to Thompson's groups $T$ and $V$.}
An element of $F$ is encoded by a (class of) pair of trees $[t,s]$ with same number of leaves.
To define an element of $T$ (resp.~$V$), add in this data a cyclic permutation (resp.~a permutation) of the leaves such as:
$$[\kappa\circ t, s]$$
with $t,s$ trees and $\kappa$ a permutation.
To canonically extend the action $\sigma$ of $F$ we encode $\kappa$ as a permutation of the vector's coordinates.
If we consider $g=[\kappa\circ t, s]\in V$ and $[s,\eta]$ is a vector of $\fH_s$, then the formula for the action becomes:
$$g\cdot [s,\eta] = [\kappa\circ t, s]\cdot [s, \eta] = [t,\zeta]$$
where the $j$-th coordinate of $\zeta$ is equal to the $\kappa(j)$-th coordinate of $\eta$.
We refer the reader to \cite[Section 2.2]{Brothier23-Haagerup} for details (note that in the first author's previous article the usual tensor product of Hilbert spaces is used rather than the direct sum).
We write $\sigma^T$ and $\sigma^V$ for the canonical extensions of $\sigma$ to the larger groups $T$ and $V$, respectively.

\noindent
{\bf Action of the Cuntz algebra $\cO$.}
The extension of $\sigma$ to the Cuntz algebra $\cO$ is even more obvious to define from our point of view.
Recall that $s_0,s_1$ are the usual generators of $\cO$ and $\tau_0,\tau_1$ are the surjective partial isometries of $\scrH$ defines in Section \ref{sec:partial-isometries}.
We have that $$\sigma^\cO:\cO\to B(\scrH), s_0\mapsto \tau_0^*, s_1\mapsto \tau_1^*$$
defines a representation of $\cO$.

\noindent
{\bf Compatibility of the actions.}
From the definition of the embedding of Birget and Nekrashevych $\iota:V\to\cO$ that we recall in Section \ref{sec:V-inside-Cuntz} we immediately deduce that the restriction of $\sigma^\cO$ to $V$ is equal to $\sigma^V$.
Hence, the three Thompson groups $F,T,V$ and the Cuntz algebra $\cO$ all act on the {\it same} Hilbert space $\scrH$ and these actions are built canonically from the P-module $(A,B,\fH)$.
When the context is clear we may suppress the superscript of $\sigma$.

\noindent
{\bf Subrepresentations of Pythagorean representations.}
Recall from Observation \ref{obs:scrH-P-module} that $(\tau_0, \tau_1, \scrH)$ is a P-module. Any sub-module of $\scrH$ can be canonically associated with a subrepresentation of $\sigma^X$ in the following manner with $X=F,T,V,\cO$.

\begin{definition} \label{def:mod-sub-rep}
For a sub-module $\scrX \subset \scrH$ define the closed subspace
	\[\sub{\scrX} := \overline{\textrm{span}}\{\cup_{\nu \in \Ver} \tau^*_\nu(\scrX)\} \subset \scrH.\]
This is the smallest topologically closed subspace of $\scrH$ that contains $\scrX$ and is closed under the operations $\tau_0^*$ and $\tau_1^*$.
Intuitively, $\sub{\scrX}$ can be considered to be the subspace formed by decorating the leaves of all trees with elements in $\scrX$ and taking the topological closure.
The space $\sub{\scrX}$ is closed under $\tau_0^*$, $\tau_1^*$ by definition and continuity, and is also closed under $\tau_0$ and $\tau_1$.
Therefore, $\sub{\scrX}$ defines a representation of $\cO$ (given by $s_i\mapsto \tau_i^*\restriction_{\sub{X}}, i=0,1$).
This induces a subrepresentation $\sigma^X_\scrX \subset \sigma^X$ for $X=F,T,V,\cO$. 
We may say that $\sub{\scrX}$ is {\it generated} by $\scrX$.
\end{definition}

\begin{observation} \label{obs:tau-mod-sub-rep}
	Here is an obvious but crucial observation: if $\scrX\subset\scrH$ is a sub-module, then $(\tau_0\restriction_\scrX, \tau_1\restriction_\scrX,\scrX)$ forms a P-module whose associated P-representation is unitarily equivalent to $\sigma_\scrX$. 
	We will freely identify these two representations.
\end{observation}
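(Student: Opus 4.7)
The plan is to verify the two assertions separately. For the P-module claim, let $P\colon\scrH\to\scrX$ denote the orthogonal projection. Since $\scrX$ is closed under $\tau_0$ and $\tau_1$, each restriction $\tau_i\restriction_{\scrX}$ is a bounded operator on $\scrX$ whose adjoint (taken in $B(\scrX)$) is $P\tau_i^{*}\restriction_{\scrX}$. Applying the Pythagorean equality for $(\tau_0,\tau_1,\scrH)$ from Observation \ref{obs:scrH-P-module} then gives
\[(\tau_0\restriction_{\scrX})^{*}(\tau_0\restriction_{\scrX})+(\tau_1\restriction_{\scrX})^{*}(\tau_1\restriction_{\scrX})=P(\tau_0^{*}\tau_0+\tau_1^{*}\tau_1)\restriction_{\scrX}=P\restriction_{\scrX}=\id_{\scrX}.\]

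Next, I would construct an explicit unitary intertwinner between the Jones Hilbert space $\widetilde{\scrH}_{\scrX}$ built from the P-module $(\tau_0\restriction_{\scrX},\tau_1\restriction_{\scrX},\scrX)$ and the subspace $\sub{\scrX}\subset\scrH$. For a representative $[t,\xi]$ with leaves of $t$ indexed by vertices $\nu_1,\ldots,\nu_n$ (forming a sdp) and decorations $\xi_i\in\scrX$, set
\[\Psi([t,\xi]):=\sum_{i=1}^{n}\tau_{\nu_i}^{*}\xi_i\in\scrH.\]
Well-definedness on the inductive limit reduces to compatibility with the insertion of a single caret at position $j$: using $\tau_{\nu_j 0}^{*}=\tau_{\nu_j}^{*}\tau_0^{*}$, $\tau_{\nu_j 1}^{*}=\tau_{\nu_j}^{*}\tau_1^{*}$ and the Pythagorean equality in $\scrH$,
\[\tau_{\nu_j 0}^{*}(\tau_0\xi_j)+\tau_{\nu_j 1}^{*}(\tau_1\xi_j)=\tau_{\nu_j}^{*}(\tau_0^{*}\tau_0+\tau_1^{*}\tau_1)\xi_j=\tau_{\nu_j}^{*}\xi_j.\]

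For isometricity, because $\{\nu_1,\ldots,\nu_n\}$ is a sdp the projections $\rho_{\nu_i}=\tau_{\nu_i}^{*}\tau_{\nu_i}$ are pairwise orthogonal; hence $\tau_{\nu_i}\tau_{\nu_j}^{*}=0$ for $i\ne j$ (the initial spaces $\rho_{\nu_i}\scrH$ and $\rho_{\nu_j}\scrH$ are orthogonal) while $\tau_{\nu_i}\tau_{\nu_i}^{*}=\id$ by surjectivity of the partial isometry $\tau_{\nu_i}$; expanding $\|\Psi([t,\xi])\|^{2}$ collapses to $\sum_{i}\|\xi_i\|^{2}=\|[t,\xi]\|^{2}$. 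Surjectivity onto $\sub{\scrX}$ follows since each $\tau_{\nu}^{*}\xi$ with $\xi\in\scrX$ lies in the image of $\Psi$ (take any sdp containing $\nu$ and decorate the other leaves by $0$), so $\Psi$ has dense image and extends to a unitary onto $\sub{\scrX}$. Finally, intertwining with the four actions is immediate from the definition: growing trees on the Jones side translates via $\tau_{\nu i}^{*}=\tau_{\nu}^{*}\tau_i^{*}$ to the action of $\tau_0^{*},\tau_1^{*}$ on $\sub{\scrX}$, and the leaf-permutation data that extends $F$ to $T,V,\cO$ is carried identically on both sides. The main piece of bookkeeping is reconciling the abstract direct-limit construction with the concrete operators on $\scrH$, but the key identity $\tau_{\nu i}^{*}=\tau_{\nu}^{*}\tau_i^{*}$ makes this essentially formal.
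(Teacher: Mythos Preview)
Your proof is correct and complete. The paper itself offers no proof for this observation, treating both claims as self-evident; you have filled in precisely the natural details (the projection computation for the Pythagorean identity on $\scrX$, and the explicit unitary $\Psi$ built from the partial isometries $\tau_\nu^*$), which is the only reasonable way to verify the statement and matches how the paper implicitly uses these identifications throughout.
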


\begin{lemma} \label{lem:complete-mod}
Let $(A,B,\fH)$ be a P-module with associated P-representation $\sigma:\cO\act\scrH$ and partial isometries $\tau_i:=\sigma(s_i^*), i=0,1$.
Equip $\scrH$ with the P-module structure $(\tau_0,\tau_1,\scrH)$.
If $\scrX\subset\scrH$ is a sub-module, then it is complete if and only if $\sub{\scrX} = \scrH$ (equivalently, $\sigma_\scrX = \sigma$).
\end{lemma}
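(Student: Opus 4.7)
The plan is to prove both implications by a duality argument, exploiting that the defining conditions on $\sub{\scrX}$ are ``dual'' to the defining conditions on a sub-module. I will use two ingredients already available in the paper: first, that $\sub{\scrX}$ is by definition the smallest closed subspace containing $\scrX$ that is stable under $\tau_0^*,\tau_1^*$, and the paper already observes it is automatically stable under $\tau_0,\tau_1$ as well; second, the elementary fact that if a closed subspace $\fY\subset\scrH$ is stable under a bounded operator $T$, then $\fY^\perp$ is stable under $T^*$ (and hence, passing to perpendiculars of a subspace stable under all four operators $\tau_0,\tau_1,\tau_0^*,\tau_1^*$ gives another subspace stable under all four). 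Note also that since any sub-module is closed by definition, $\scrX^\perp$ is the unique candidate for a ``residual'' complement: $\scrH=\scrX\oplus\scrX^\perp$ orthogonally, and $\scrX$ is complete iff $\scrX^\perp$ contains no nontrivial sub-module.

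For the forward direction, assume $\scrX$ is complete. Set $\scrY:=\sub{\scrX}$. By the stability properties just recalled, $\scrY$ is stable under all four of $\tau_0,\tau_1,\tau_0^*,\tau_1^*$. Therefore so is $\scrY^\perp$; in particular $\scrY^\perp$ is itself a sub-module of $(\tau_0,\tau_1,\scrH)$. Since $\scrX\subset\scrY$, we have $\scrY^\perp\subset\scrX^\perp$. But $\scrX^\perp$ contains no nontrivial sub-module by completeness of $\scrX$, so $\scrY^\perp=\{0\}$ and $\sub{\scrX}=\scrY=\scrH$.

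For the converse, assume $\sub{\scrX}=\scrH$, and let $\fY\subset\scrX^\perp$ be any sub-module, i.e.\ a closed subspace stable under $\tau_0,\tau_1$. Then $\fY^\perp$ is closed and stable under $\tau_0^*,\tau_1^*$, and from $\fY\subset\scrX^\perp$ we get $\scrX\subset\fY^\perp$. By the minimality characterisation of $\sub{\scrX}$, this forces $\sub{\scrX}\subset\fY^\perp$; combined with the hypothesis $\sub{\scrX}=\scrH$ this yields $\fY^\perp=\scrH$, so $\fY=\{0\}$. Hence $\scrX^\perp$ has no nontrivial sub-module and $\scrX$ is complete. The equivalence $\sub{\scrX}=\scrH \Leftrightarrow \sigma_\scrX=\sigma$ is then immediate from Definition \ref{def:mod-sub-rep}.

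I do not expect any real obstacle: the argument is a symmetric application of taking orthogonal complements, and the whole point is that Definition \ref{def:mod-sub-rep} was engineered so that $\sub{\cdot}$ and ``sub-module'' are mutually dual constructions. The only minor point to be careful about is the asymmetry in the definition (``complete'' only requires $\fZ$ to be a vector subspace) which is harmless since $\scrX$, being a sub-module, is closed and forces $\fZ=\scrX^\perp$.
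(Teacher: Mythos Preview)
Your proof is correct. The forward direction is essentially identical to the paper's: both observe that $\sub{\scrX}^\perp$ is a sub-module contained in the residual space $\scrX^\perp$, hence trivial.

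For the converse, the paper argues by contrapositive: given a nontrivial sub-module $\scrY\subset\scrX^\perp$, it shows $\sub{\scrX}\perp\sub{\scrY}$ via an explicit inner-product computation $\langle\tau_\nu^*(x),\tau_\omega^*(y)\rangle=0$, with a case split on whether $\nu,\omega$ are disjoint or one is a descendant of the other. Your route is more direct and avoids this computation entirely: you simply note that $\fY^\perp$ is a closed $\tau_i^*$-stable subspace containing $\scrX$, so the minimality characterisation of $\sub{\scrX}$ immediately gives $\sub{\scrX}\subset\fY^\perp$. This is cleaner and makes the duality between ``sub-module'' and ``$\sub{\cdot}$'' transparent, exactly as you say in your final paragraph. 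The paper's computation buys nothing extra here (it proves the slightly stronger $\sub{\scrX}\perp\sub{\scrY}$, but that is not needed).
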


\begin{proof}
Consider a complete sub-module $\scrX\subset\scrH$.
Observe that $\sub{\scrX}\oplus \sub{\scrX}^\perp$ defines a decomposition of the representation of $\sigma$ of $\cO$.
In particular, $\sub{\scrX}^\perp$ is a sub-module of $(\tau_0,\tau_1,\scrH)$ and thus must be trivial since $\scrX$ is complete. This proves the obvious direction.
	
	To prove the converse we shall instead prove the contrapositive. Let $\scrX$ be a sub-module of $\scrH$ that is not complete and thus there exists a nontrivial sub-module $\scrY \subset \scrX^\perp$. We shall claim that $\sub{\scrX}$ and $\sub{\scrY}$ are orthogonal subspaces. Indeed, it is sufficient to show that 
	\[\langle \tau^*_\nu(x), \tau^*_\omega(y) \rangle = 0\]
	for any two elements $x \in \scrX, y \in \scrY$ and any two vertices $\nu, \omega$. If $\nu, \omega$ are disjoint vertices (that is, their associated sdi's are disjoint) then the above immediately follows. Hence, without loss of generality, assume that $\nu$ is either a child of $\omega$ or $\nu = \omega$. Let $\mu$ be the vertex such that $\omega \cdot \mu = \nu$ where $\cdot$ is the concatenation of sequences. We have:
	\[\langle \tau^*_\nu(x), \tau^*_\omega(y) \rangle = \langle \tau^*_\omega\tau^*_\mu(x), \tau^*_\omega(y) \rangle = \langle x, \tau_\mu(y) \rangle = 0\]
	where the last equality follows because $\scrY$ is a sub-module of $\scrH$ and thus $\tau_\mu(y) \in \scrY$. Hence, we have shown that $\sub{\scrX}$ and $\sub{\scrY}$ are orthogonal nontrivial subspaces and thus $\sub{\scrX}$ is a proper subspace of $\scrH$. This completes the proof.
\end{proof}

\begin{remark} \label{rem:complete-sub-rep}
	This shows that if $(A,B,\fH)$ is a P-module with associated representation $\cO\act\scrH$, then the subspace $\fH\subset\scrH$ is always a complete sub-module of $\scrH$. 
	Furthermore, suppose $\fK$ is a complete sub-module of $\fH$ and $\fH$ is finite dimensional. Then from \cite{BW23} we have $\sub{\fK} = \scrH$ and thus $\fK$ is also a complete sub-module of $\scrH$. 
\end{remark}

\subsubsection{Direct sum}
Observe that if $(A,B,\fH)$ and $(A',B',\fH')$ are P-modules with associated representations $\sigma,\sigma'$ acting on $\scrH,\scrH'$, respectively, then $(A\oplus A',B\oplus B',\fH\oplus\fH')$ is a P-module with associated representation $\sigma\oplus\sigma'$ acting on $\scrH\oplus \scrH'$.
Hence, the Jones construction is compatible with direct sums.
This extends to infinite direct sums and even direct integrals in the obvious manner.

\subsubsection{Functoriality} \label{subsec:p-rep-functors}

Fix two P-modules $m=(A,B,\fH)$ and $m'=(A',B',\fH')$ and consider their associated representations $\sigma:\cO\act \scrH, \sigma':\cO\act\scrH'$.
Let $\theta:\fH\to\fH'$ be an intertwinner.
If $t$ is a tree with say $n$ leaves we define the map 
$$\theta_t:\fH_t\to \fH_t', [t,\eta_1,\cdots,\eta_n]\mapsto [t,\theta(\eta_1),\cdots,\theta(\eta_n)].$$
This is a bounded linear operator of same norm as $\theta$.
Since $\theta$ is an intertwinner the collection $(\theta_t,\ t\in\fT)$ indexed by all the tree is compatible with the directed system structures and defines a bounded linear map from $\scrK$ to $\scrK'$ which uniquely extends into
$$\Theta:\scrH\to\scrH'.$$
Diagrammatically, this map works as follows.
\begin{center}
	\begin{tikzpicture}[baseline=0cm, scale = 1]
		\draw (0,0)--(-.5, -.5);
		\draw (0,0)--(.5, -.5);
		
		\node[label={[yshift=-22pt] \normalsize $\xi_1$}] at (-.5, -.5) {};
		\node[label={[yshift=-22pt] \normalsize $\xi_2$}] at (.5, -.5) {};
		
		\node[label={[yshift= -3pt] \normalsize $\Theta$}] at (1.4, -.7) {$\longmapsto$};
	\end{tikzpicture}%
	\begin{tikzpicture}[baseline=0cm, scale = 1]
		\draw (0,0)--(-.5, -.5);
		\draw (0,0)--(.5, -.5);
		
		\node[label={[yshift=-22pt] \normalsize $\theta(\xi_1)$}] at (-.55, -.5) {};
		\node[label={[yshift=-22pt] \normalsize $\theta(\xi_2)$}] at (.55, -.5) {};		
	\end{tikzpicture}.
\end{center}
In symbols, following the notations of Section \ref{sec:partial-isometries}, we have that 
$$\Theta\circ \tau_\nu = \tau_\nu'\circ \Theta \textrm{ and } \Theta\circ \tau^*_\nu = \tau^{'*}_\nu\circ \Theta \text{ for all vertex } \nu.$$
It is not hard to see that $\Theta$ is a morphism from $\sigma$ to $\sigma'$ in the category of representations of $\cO$ (and thus of $F,T,V$), i.e.
$$\Theta\circ\sigma(x) = \sigma'(x)\circ \Theta \text{ for all } x\in \cO.$$
We deduce that 
$$\Pi^\cO:\Mod(\cP)\to \Rep(\cO),\ (A,B,\fH)\mapsto (\sigma_{A,B}:\cO\act \scrH_{A,B}),\ \theta\mapsto \Theta$$
defines a covariant functor.
By restricting the action $\sigma_{A,B}$ to $X=F,T,V$ we deduce three other functors 
$$\Pi^X:\Mod(\cP)\to\Rep(X).$$ 
Moreover, if we equip these categories with the {\it monoidal} structure given by direct sums we observe that we have {\it monoidal} functors.

\begin{remark}
A priori there are much more morphisms in $\Rep(\cO)$ (and even more in $\Rep(F)$) than in $\Mod(\cP).$
Although, our main theorem will prove that up to taking certain full subcategories our functors $\Pi^X$ become fully faithful (i.e.~is bijective on morphisms) for $X=F,T,V,\cO$.
\end{remark}

\subsection{Diffuse and atomic Pythagorean representations} \label{subsec:diff-atom-rep}

We recall some notions and results on diffuse and atomic representations from \cite{BW22,BW23}.
To ease the reading we first introduce definitions that are (nontrivially) equivalent with the original ones and then explain why they coincide.
For this subsection, unless stated otherwise, $X$ and $Y$ shall refer to an object in the lists $\reps$ and $F,T,V$, respectively.

A P-module $(A,B,\fH)$ uniquely defines a representation $\cP \mapsto B(\fH)$ so that $a \mapsto A, b \mapsto B, a^*\mapsto A^*,b\mapsto B^*$.
When $\xi$ is in $\fH$ we often write $a\xi$ for $A\xi$ and similarly use the notation $w\xi$ when $w$ is a word in $a,b$. 
If $p$ is an infinite word in $a,b$ going from right to left, then we denote $p_n$ to be the first $n$ letters.
\begin{definition}
	A P-module $\fH$ is called 
	\begin{itemize}
		\item \textit{diffuse} if $\lim_{n\to\infty}p_n\xi = 0$ for all $\xi \in \fH$ and all infinite words $p$;
		\item \textit{atomic} if it does not contain any nontrivial diffuse sub-modules.
			\end{itemize}
\end{definition}

\noindent
{\it Diffuse and atomic parts of a P-module.}
From the above definitions we deduce the following decomposition of a general P-module:
\[\fH = \fH_{\diff} \oplus \fH_{\atom} \oplus \fH_{\res}^{AD}\]
where $\fH_{\diff},\fH_{\atom}$ are the largest diffuse and atomic sub-modules, and $\fH_{\res}^{AD}$ is defined to be the orthogonal complement of $\fH_{\diff} \oplus \fH_{\atom}$. 

\begin{definition}
The vector subspace $\fH_{\res}^{AD}$ is necessarily {\it residual} in the sense given in Subsection \ref{sec:P-module}: $\fH_{\res}^{AD}$ does not contain any nontrivial sub-modules.
We shall refer to $\fH_{\res}^{AD}$ as being \textit{the AD}-residual subspace of $\fH$ where $AD$ stands for atomic and diffuse.
\end{definition}

\begin{remark}
For any P-module $\fH$ we have the canonical decomposition $\fH = \fH_{\diff} \oplus \fH_{\atom} \oplus \fH_{\res}^{AD}$ of above which works for any dimension of $\fH$ (including infinite dimension).
If $\dim(\fH)<\infty$, then we have better: $\fH=\oplus_{i=1}^n \fH_i\oplus \fH_{\res}$ where $n$ is finite and the $\fH_i$ are irreducible (either atomic or diffuse) and $\fH_{\res}$ is a residual subspace that is {\it the largest} residual subspace of $\fH$.
We always have that $\fH_{\res}^{AD}\subset \fH_{\res}$ but the inclusion may be proper even in the finite dimensional case.
Indeed, $\fH_{\diff}$ and $\fH_{\atom}$ may contain some nontrivial residual subspaces.
On the other hand, it is not always possible to express $\fH$ as a direct sum of two sub-modules, hence $\fH_{\res}^{AD}$ may be non-trivial.
Moreover, if $\dim(\fH)=\infty$, then they may not exists a largest residual subspace $\fH_{\res}$.
These three phenomena occurs in the following examples.

Indeed, consider $(A,B,\C^2)$ where $Ae_1=e_2= Be_2$ and necessarily $Be_1=0=Ae_2$. 
We have $\C^2=:\fH = \fH_{\atom}$ and thus $\fH_{\diff}=\fH_{\res}^{AD}=\{0\}$ while $\fH_{\res}=\C e_1$.
Next, consider $(A,B,\C^3) :=(1/\sqrt{2}, 1/\sqrt{2}, \C) \oplus m_{01,1}$ (see Subsection \ref{subsec:atomic-case} for definition of $m_{01,1}$). Then define $(\ti A ,\ti B,\C^4)$ so that $\ti A, \ti B$ restrict to $A,B$, respectively, on $\textrm{span}\{e_1, e_2, e_3\}$, and $\ti Ae_4 = 1/2(e_1+e_2),\ \ti Be_4 = 1/2(-e_1+e_3)$. Then $(\C^4)_{\res}^{AD} = \C e_4$ and $\C^4$ cannot be expressed as a direct sum of two sub-modules.
Finally, consider $\fH=\ell^2(\N)$ with $A=B$ equal to the unilateral shift operator times $1/\sqrt 2$ and observe that $\ell^2(\{0,1,\cdots,j\})$ is a residual subspace for any $j\geq 0$. Hence, there is no ``largest" residual subspace for $\fH$.

We warn the reader that the sub-modules $\fH_{\diff}, \fH_{\atom}$ defined above do not necessarily coincide with the similarly termed sub-modules defined in our previous article \cite{BW23}. This is because in \cite{BW23} we specialised to the finite-dimensional case (where the above stronger statement involving $\fH_{\res}$ can be exploited) where else the above applies to the more general infinite-dimensional case. However, if $\dim(\fH) <\infty$ and $\fH_{\res}^{AD} = \fH_{\res}$ then the definitions for $\fH_{\diff}, \fH_{\atom}$ in the present article and in \cite{BW23} do coincide.
\end{remark}

\noindent
{\it Diffuse and atomic representations of the Thompson groups and of the Cuntz algebra.}
Recall, from every P-module $m$ we can obtain a representation of $X$ via the functor 
$$\Pi^X:\Mod(\cP)\to\Rep(X).$$ 
We shall say that a representation $\sigma$ of $X$ is \textit{diffuse} if there exists a diffuse P-module $m$ such that $\Pi^X(m) \cong \sigma$. Furthermore, we say that $\sigma$ is \textit{atomic} if it does not contain any diffuse subrepresentations. From here on, we may refer to diffuse (resp.~atomic) representations, thus suppressing the redundant word ``Pythagorean''.

\noindent
{\it Diffuse P-modules and Ind-mixing representations.}
In \cite{BW22} the authors introduced the property {\it\NInd} for a representation of a group $G$ which is when the representation does not contain the induction of a finite dimensional representation of any subgroup of $G$. One of the main results from \cite{BW22} and \cite{BW23} is that a P-module $m$ is diffuse if and only if its associated representation $\sigma$ of $F$ is \NInd{} (the same also holds for $T$ and $V$). 
This permits to show that if two P-modules induce equivalent P-representations and one of them is diffuse, then both of them must be diffuse.
This last statement also holds after replacing $F$ with $T,V,\cO$ and replacing diffuse with atomic. 

\noindent
{\it Decomposition.}
If $\fH$ is a finite dimensional P-module, then one can decompose it into a finite direct sum of irreducible ones plus a residual subspace.
The atomic irreducible finite dimensional P-module and their associated representations are now totally understood, see \cite{BW23} and also Subsection \ref{subsec:atomic-case}.
We shall show in this present article that a diffuse irreducible finite dimensional P-module yields an irreducible representation of $F$ (and thus of $T,V,\cO$) and moreover classify those using only the underlying P-modules.

\section{Isomorphisms of the categories of diffuse P-representations}\label{sec:isomorphism-diffuse}

\textbf{From now on all P-modules considered are assumed to be diffuse.}

\subsection{The von Neumann algebras of intertwinners} \label{subsec:vNA-interwinners}
For the general theory of von Neumann algebras we refer the reader to \cite{Dixmier81}.

The \textit{strong operator topology} (SOT) on $B(\scrH)$ is the locally convex topology generated by the family of seminorms $B(\scrH) \ni T \mapsto \norm{T(z)}$ indexed by vectors $z \in \scrH$. Hence, a net of (bounded linear) operators $(T_\lambda)_\lambda$ converges to an operator $T$ in the SOT if and only if
\[\lim_\lambda \norm{T(z) - T_\lambda(z)} = 0 \textrm{ for all } z\in \scrH.\]
This will be denoted by $$T_\lambda \xrightarrow{s} T.$$
This is the so-called {\it pointwise convergences} topology or the {\it product topology} where $\scrH$ is equipped with its norm topology.
The SOT is strictly weaker than the norm topology on $B(\scrH)$ when $\dim(\scrH)=\infty$ (of course if $\dim(\scrH)<\infty$ then the SOT and the norm topology are the same).

If $S\subset B(\scrH)$ is a multiplicative subset containing the identity and closed under taking adjoint, then the celebrated theorem of von Neumann shows that the SOT closure of the linear span of $S$ is equal to its bicommutant $S''$ (i.e.~the relative commutant of the relative commutant $S'$ of $S$ inside $B(\scrH)$). 
We say that $S''$ is the von Neumann algebra generated by $S$ and denote it by $\VN(S)$.
When $S=\sigma(G)$ for a unitary representation $\sigma:G\to\U(\scrH)$ of a group $G$, then note that $\sigma(G)'$ is the space of $G$-intertwinners (which is a von Neumann algebra since $S'=S'''$). Hence, $\VN_\sigma(G):=\VN(\sigma(G))$ is the relative commutant of the space of $G$-intertwinners. 

If $\sigma$ is a diffuse representation, then we show that the space of intertwinners for $F,T,V,\cO$ are all equal.
We prove it by showing that $\sigma(F)$ and $\sigma(\cO)$ generate the same von Neumann subalgebra of $B(\scrH)$ and thus have same relative commutant which are the $F$-intertwinners and $\cO$-intertwinners, respectively, (using again the identity $S'=S'''$).

\begin{proposition} \label{prop:pyth-vNA}
If $\sigma:\cO\act\scrH$ is a diffuse representation then the von Neumann algebras generated by $F,T,V,\cO$ are all equal, i.e.~$\sigma(F)''=\sigma(\cO)''.$
\end{proposition}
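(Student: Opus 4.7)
The plan is to establish the equalities via the following sequence of reductions. The chain of inclusions $\sigma(F)''\subseteq \sigma(T)''\subseteq \sigma(V)''\subseteq \sigma(\cO)''$ is immediate from the group inclusions $F\subset T\subset V\subset \cO$. The equality $\sigma(V)''=\sigma(\cO)''$ holds without any hypothesis on $\sigma$: by Birget--Nekrashevych, $V$ is realised as the unitary normaliser of the standard Cartan subalgebra $\cA\subset\cO$, which generates $\cO$ as a C*-algebra. Hence $\sigma(V)$ C*-generates $\sigma(\cO)$, and the C*-closure of any self-adjoint subset of $B(\scrH)$ lies inside its SOT-closure. The remaining, and essential, content of the proposition is the inclusion $\sigma(\cO)''\subseteq \sigma(F)''$ under the diffuseness hypothesis.

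The main tool I would use is the observation, following directly from diffuseness, that for every infinite binary word $p$ the projections $\rho_{p_n}=\tau_{p_n}^*\tau_{p_n}$ converge to $0$ in the SOT as $n\to\infty$. To verify this it suffices to check on the dense family of vectors of the form $\tau_\nu^*\eta$ with $\nu$ a finite binary string and $\eta\in\fH$: for $n$ large enough either $p_n$ is incomparable with $\nu$ so $\rho_{p_n}\tau_\nu^*\eta=0$, or $p_n=\nu\cdot p_n'$ with $|p_n'|\to\infty$, in which case $\norm{\rho_{p_n}\tau_\nu^*\eta}=\norm{\tau_{p_n'}\eta}$, a product of the operators $A,B$ applied to $\eta$ of increasing length which tends to zero by the very definition of the diffuseness of $(A,B,\fH)$.

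With this preparation, the plan is to realise each $v\in V$ as an SOT-limit of elements in the $*$-algebra generated by $\sigma(F)$. Given $v=\sum_j \tau_{\nu_j}^*\tau_{\mu_j}$, after refining the sdps we may assume all $|\nu_j|=|\mu_j|=d$; the data of $v$ then amounts to a permutation $\kappa$ of the $2^d$ binary strings of length $d$, and $v\in F$ precisely when $\kappa=\id$. For each large $n$, I would construct $g_n\in F$ that agrees with $\sigma(v)$ on the complement of a subspace $P_n\scrH$, where $P_n$ is a finite sum of projections $\rho_{w_i^{(n)}}$ with $|w_i^{(n)}|\to\infty$ along designated paths. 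The preliminary observation then gives $P_n\to 0$ strongly, so $\sigma(v)-\sigma(g_n)$ factors through $P_n$ and vanishes in the SOT, yielding $\sigma(v)\in\sigma(F)''$.

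The main obstacle is the combinatorial construction of the approximants $g_n$: one must \emph{absorb} the non-trivial permutation $\kappa$ into a region that disappears in the limit, while implementing the rest of the action of $v$ using only order-preserving moves. A significant simplification is to reduce to a generating system of $V$ modulo $F$, for instance the half-rotation $c=\tau_0^*\tau_1+\tau_1^*\tau_0\in T$ together with one non-cyclic involution; once each such generator is exhibited in $\sigma(F)''$, the inclusion $\sigma(V)\subseteq \sigma(F)''$ follows from the group-theoretic relations. The explicit construction of $F$-elements $g_n$ whose action coincides with $c$ (respectively the involution) off a vanishing-projection subspace is the technical heart and, I expect, the hardest part of the proof.
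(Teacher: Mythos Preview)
Your reductions are fine, and the observation that $\rho_{p_n}\to 0$ strongly for every infinite word $p$ is exactly the ``diffuse'' input one needs. The gap is in the heart of the argument: approximating $\sigma(v)$ by $\sigma(g_n)$ with $g_n\in F$ cannot work for the half-rotation $c=\tau_0^*\tau_1+\tau_1^*\tau_0$, no matter how you choose the vanishing projections $P_n$. Indeed, suppose $\sigma(g_n)$ agrees with $\sigma(c)$ on $(1-P_n)\scrH$. Take any vector $\xi=\tau_{0\mu}^*\eta$ with $0\mu$ outside the support of $P_n$; then $\sigma(c)\xi=\tau_{1\mu}^*\eta$, so $g_n$ must send the sdi $I_{0\mu}$ onto $I_{1\mu}$, i.e.\ $g_n(x)=x+1/2$ there. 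Since such $\mu$ can be chosen with $I_{0\mu}$ arbitrarily close to $1/2$, continuity forces $g_n(1/2)=1$, hence $g_n$ collapses $[1/2,1]$ and is not a homeomorphism. The obstruction is intrinsic: an order-preserving bijection of $[0,1]$ cannot mimic a half-rotation on any set whose complement has small $\rho$-projection. Passing to the linear span $\C[\sigma(F)]$ does not help unless you already have the projections $\rho_\nu$ at your disposal.

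The paper avoids this by not trying to approximate unitaries at all. It first proves $\rho_\nu\in\sigma(F)''$ for every finite word $\nu$, via the mean ergodic fact (from \cite{BW22}) that $\tfrac{1}{n}\sum_{k=0}^{n-1}\sigma(g^k)\to \id-\rho_{\supp(g)}$ strongly when $\sigma$ is diffuse. Once the $\rho_\nu$ are available, multiplying a suitable $\sigma(g)$ by $\rho_\omega$ isolates a single block $\tau_\nu^*\tau_\omega\in\sigma(F)''$ whenever $\nu,\omega$ lie on the same side (left, right, or centre) of $t_\infty$. Then one writes $\tau_0=\sum_{|\nu|=n}\tau_\nu^*\tau_{0\nu}$; all summands but $\tau_{1^n}^*\tau_{01^n}=\rho_{1^n}\tau_0$ are of the admissible form, and this last term tends to $0$ strongly by your own observation. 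Thus $\tau_0,\tau_1\in\sigma(F)''$, which immediately gives $\sigma(\cO)''\subset\sigma(F)''$. The missing idea in your plan is precisely the mean-ergodic step producing $\rho_\nu\in\sigma(F)''$; with it, one bypasses the impossible task of approximating order-reversing permutations by elements of $F$.
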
 

\begin{proof}
	Since we have the chain of inclusion $F\subset T\subset V\subset \cO$ it is sufficient to only show that $\VN_\sigma(\cO)\subset\VN_\sigma(F).$
	The operators $\tau_0,\tau_1$ generate $\VN_\sigma(\cO)$ as a von Neumann algebra. Hence, it is sufficient to show that $\tau_i \in \VN_\sigma(F)$ for $i=0,1$.
	
	Since $\sigma$ is diffuse we can use Proposition 2.8 of \cite{BW22} which asserts that
	\[\frac{1}{n}\sum_{k=0}^{n-1} \sigma(g^k) \xrightarrow{s} \id - \rho_{\supp(g)} \text{ for all } g\in F\]
	where $\supp(g) := \overline{\{p \in \cC : g(p) \neq p\}}$ is the closure of the support of $g$ and $\rho_{\supp(g)}$ is the projection defined in Subsection \ref{sec:partial-isometries}.
	We deduce that $\rho_\nu \in \VN_\sigma(F)$ for all vertex $\nu \in \Ver$. 
	
	Fix $\omega,\nu$ some finitely supported binary strings containing at least one $0$ and one $1$ (i.e.~none of them are contained in an endpoint $0^\infty$ or $1^\infty$ of the Cantor space). There exists $g\in F$ so that $g(I_\omega)=I_\nu$ where $I_\omega$ is the sdi associated to $\omega$. This implies that $\sigma(g) \rho_\omega=\tau_\nu^*\tau_\omega\in \VN_{\sigma(F)}$.
	 A similar argument shows $\tau_\nu^*\tau_\omega\in \VN_{\sigma}(F)$ when both $\nu, \omega$ are finitely supported sequences of only $0$'s (resp.~only $1$'s).

Now consider the map $\tau_0$. By using the identity $\id = \rho_0 + \rho_1$ we can rewrite $\tau_0$ as
	\[\tau_0 = \rho_0\tau_0 + \rho_1\tau_0 = \tau^*_0\tau_{00} + \tau^*_1\tau_{01}.\]
	Repeating this procedure, we obtain more generally that
	\begin{equation} \label{eqn:tau_0}
		\tau_0 = \sum_{\substack{\nu \in \Ver:\\ \vert \nu \vert = n}} \tau^*_\nu\tau_{0\nu} \textrm{ for } n \geq 1.
	\end{equation} 
	Diagrammatically, this can be visualised by observing that ``lifting'' the vertex $0$ to the root $\emptyset$ is equivalent to ``lifting'' the two immediate children, $00$ and $01$, of $0$ to the two immediate children, $0$ and $1$, of the root $\emptyset$. 
	Then, reapplying this idea to all children of $0$ at distance $n$.
	
	From the above reformulation, we have expressed $\tau_0$ as a sum of terms $\tau^*_\nu\tau_{0\omega}$ that belong to $\VN_\sigma(F)$ except for the last term which will be in the form $\tau^*_{1^n}\tau_{01^n} = \rho_{1^n}\tau_0$. However, by \cite{BW23}, the assumption that $\sigma$ is diffuse gives
	\[\rho_{1^n}\tau_0 \xrightarrow{s}  0 \textrm{ as } n \to \infty.\]
	Hence, by emitting the last term in Equation \ref{eqn:tau_0}, this proves that $\tau_0$ can be written as the SOT limit of operators in $\VN_\sigma(F)$. Therefore, $\tau_0 \in \VN_\sigma(F)$ and a similar reasoning shows that $\tau_1 \in \VN_\sigma(F)$. This completes the proof.
\end{proof}

Using a 2 by 2 matrix trick we deduce equalities of intertwinner spaces.

\begin{corollary} \label{coro:P-rep-intertwinner}
If $\sigma^\cO:\cO\act\scrH$ and $\ti\sigma^\cO:\cO\act\ti\scrH$ are two diffuse representations, then $\theta:\scrH\to\ti\scrH$ intertwinnes the $F$-actions if and only if it intertwinnes the $\cO$-actions.
In particular, $\sigma^F:F\act\scrH$ is irreducible if and only if the extension $\sigma^\cO:\cO\act\scrH$ is irreducible and $\sigma^F \cong \ti\sigma^{F}$ if and only if $\sigma^\cO \cong \ti\sigma^{\cO}$.
\end{corollary}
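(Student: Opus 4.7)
The plan is to reduce the corollary to Proposition \ref{prop:pyth-vNA} via a standard $2\times 2$ matrix trick. Consider the direct sum representation
$$\pi:=\sigma^\cO\oplus\ti\sigma^\cO:\cO\act\scrH\oplus\ti\scrH.$$
The first step is to check that $\pi$ is again diffuse. This follows from the definition of diffuseness for P-modules: if $(A,B,\fH)$ and $(A',B',\fH')$ are diffuse P-modules giving rise (via the functor $\Pi^\cO$) to $\sigma^\cO$ and $\ti\sigma^\cO$, then $(A\oplus A',B\oplus B',\fH\oplus\fH')$ induces $\pi$ and is clearly diffuse since for every infinite word $p$ and every $\xi\oplus\xi'$ we have $\|p_n(\xi\oplus\xi')\|^2=\|p_n\xi\|^2+\|p_n\xi'\|^2\to 0$. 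Hence Proposition \ref{prop:pyth-vNA} applies to $\pi$ and yields $\pi(F)''=\pi(\cO)''$, so by the double commutant theorem $\pi(F)'=\pi(\cO)'$.

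Next, identify elements of $B(\scrH\oplus\ti\scrH)$ with $2\times 2$ matrices of operators in the obvious way. A bounded linear map $\theta:\scrH\to\ti\scrH$ intertwines the $F$-actions precisely when the off-diagonal operator
$$T_\theta:=\begin{pmatrix}0 & 0\\ \theta & 0\end{pmatrix}$$
belongs to $\pi(F)'$. By the equality $\pi(F)'=\pi(\cO)'$ this is equivalent to $T_\theta\in\pi(\cO)'$, which unwinds to $\theta$ intertwining the $\cO$-actions. This proves the main equivalence.

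For the two ``in particular'' statements: taking $\ti\sigma^\cO=\sigma^\cO$ and applying Proposition \ref{prop:pyth-vNA} directly gives $\sigma(F)'=\sigma(\cO)'$, so one commutant reduces to $\C\cdot\id$ exactly when the other does; this gives the equivalence of irreducibility for $F$ and for $\cO$. For unitary equivalence, the first part of the corollary shows that a bounded operator $\scrH\to\ti\scrH$ intertwines the $F$-actions if and only if it intertwines the $\cO$-actions, and this equivalence is preserved under the additional condition of being unitary, yielding $\sigma^F\simeq\ti\sigma^F\Leftrightarrow \sigma^\cO\simeq\ti\sigma^\cO$. The only mildly subtle step is the verification that a direct sum of diffuse P-modules is diffuse, but this is immediate from the definition; the real content is entirely contained in Proposition \ref{prop:pyth-vNA}.
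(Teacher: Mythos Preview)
Your proof is correct and follows essentially the same approach as the paper: form the direct sum representation, observe it is diffuse, apply Proposition \ref{prop:pyth-vNA} to equate the commutants, and read off the intertwining statement via the off-diagonal $2\times 2$ matrix. You give a bit more detail than the paper on why the direct sum remains diffuse and on the passage from $\pi(F)''=\pi(\cO)''$ to $\pi(F)'=\pi(\cO)'$, but the argument is the same.
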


\begin{proof}
Consider two diffuse representations $\sigma:\cO\act\scrH$ and $\ti\sigma:\cO\act\ti\scrH$ and let $\theta:\scrH\to\ti\scrH$ be a bounded linear map that intertwinnes the $F$-actions.
Let $\tau:=\sigma\oplus \ti\sigma:\cO\act \scrH\oplus\ti\scrH$ be the direct sum of these representations which is still diffuse.
The map $$\widehat \theta:\scrH\oplus\ti\scrH \to \scrH\oplus\ti\scrH,\ (\xi,\ti\xi)\mapsto (0,\theta(\xi))$$
is bounded linear and intertwinnes the $F$-action (i.e.~the restriction of $\tau$ to $F$), that is, $\widehat \theta$ belongs to the relative commutant of $\tau(F)$.
Proposition \ref{prop:pyth-vNA} implies that $\widehat \theta$ is in the relative commutant of $\tau(\cO)$ and thus intertwinnes the $\cO$-action. 
Therefore, $\theta$ also intertwinnes the $\cO$-action.
The corollary easily follows.
\end{proof}

We deduce the following corollary concerning all (including infinite P-dimensional) diffuse representations of $F$. 

\begin{corollary} \label{coro:diffuse-sub-rep}
	Any subrepresentation of a diffuse representation $\sigma$ of $F$ is again a diffuse representation of the form $(\sigma_\scrX, \sub{\scrX})$ where $\scrX$ is a sub-module of $(\tau_0, \tau_1, \scrH)$.  
\end{corollary}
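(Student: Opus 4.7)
Let $\sigma:F\act\scrH$ be the given diffuse representation, and realise it as $\sigma=\Pi^F(m)$ for some diffuse P-module $m=(A,B,\fH)$, so that $\scrH=\scrH_{A,B}$ carries the operators $\tau_0,\tau_1$ and the associated $\cO$-action $\sigma^\cO$ from Subsection \ref{subsubsec:pyth-rep}. The overall strategy is: lift any $F$-invariant closed subspace to an $\cO$-invariant one using Corollary \ref{coro:P-rep-intertwinner}, recognise it as arising from a sub-module of $(\tau_0,\tau_1,\scrH)$, and then propagate the diffuseness from $\sigma$ to the subrepresentation via the Ind-mixing characterisation of diffuseness.

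Let $\scrY\subseteq\scrH$ be a closed $F$-invariant subspace and let $p_\scrY$ be the orthogonal projection onto $\scrY$. Then $p_\scrY$ lies in the commutant of $\sigma(F)$. By Corollary \ref{coro:P-rep-intertwinner} (whose proof goes through Proposition \ref{prop:pyth-vNA} and uses precisely that $\sigma$ is diffuse), the $F$- and $\cO$-commutants of $\sigma$ coincide, so $p_\scrY\in\sigma^\cO(\cO)'$. Hence $\scrY$ is stable under all of $\tau_0,\tau_1,\tau_0^*,\tau_1^*$, so in particular $\scrX:=\scrY$ is a sub-module of the ambient P-module $(\tau_0,\tau_1,\scrH)$. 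Since $\scrY$ is already closed under every $\tau_\nu^*$, the definition of $\sub{\cdot}$ in Definition \ref{def:mod-sub-rep} yields $\sub{\scrX}=\scrY$, and by Observation \ref{obs:tau-mod-sub-rep} the subrepresentation $\sigma|_\scrY$ identifies with the Pythagorean representation $\sigma_\scrX$ induced by the sub-module $(\tau_0|_\scrX,\tau_1|_\scrX,\scrX)$. This already gives the structural part of the claim: the subrepresentation has the form $(\sigma_\scrX,\sub{\scrX})$.

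It remains to check that $\sigma_\scrX$ is itself diffuse. Because $\sigma$ arises from a diffuse P-module, the equivalence recalled in Subsection \ref{subsec:diff-atom-rep} (a P-representation of $F$ is diffuse iff it is Ind-mixing) tells us that $\sigma$ is Ind-mixing. The Ind-mixing property is manifestly inherited by subrepresentations, since containing an induced representation from a finite-dimensional one is preserved when passing to a larger representation; hence $\sigma_\scrX$ is Ind-mixing. By the previous step $\sigma_\scrX$ is a Pythagorean representation, so applying the same equivalence in the other direction forces $\sigma_\scrX$ to be diffuse. The only conceptual step here is the reduction from $F$-invariance to $\cO$-invariance, which rests on Corollary \ref{coro:P-rep-intertwinner}; every other step is a direct bookkeeping with the $\sub{\cdot}$ construction and the Ind-mixing criterion.
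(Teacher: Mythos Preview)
Your proof is correct and follows essentially the same approach as the paper: both use the equality of the $F$- and $\cO$-commutants (you via Corollary \ref{coro:P-rep-intertwinner}, the paper directly via Proposition \ref{prop:pyth-vNA}) to see that an $F$-invariant closed subspace $\scrX$ is stable under $\tau_0,\tau_1$, and then invoke Observation \ref{obs:tau-mod-sub-rep} together with $\sub{\scrX}=\scrX$.

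The only noteworthy difference is the diffuseness argument. You go through the Ind-mixing characterisation from Subsection \ref{subsec:diff-atom-rep}, which is correct but slightly roundabout. The paper leaves diffuseness implicit, relying on the more elementary observation that any sub-module of a diffuse P-module is itself diffuse (immediate from the pointwise definition $\lim_n p_n\xi=0$), combined with the fact that $(\tau_0,\tau_1,\scrH)$ is diffuse when $(A,B,\fH)$ is. Your route has the virtue of being self-contained within the results stated in this paper; the paper's route is shorter once one accepts that $(\tau_0,\tau_1,\scrH)$ inherits diffuseness.
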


\begin{proof}
	Let $(A,B,\fH)$ be a diffuse P-module with associated representation $\sigma:F\act\scrH$ and suppose $\scrX \subset \scrH$ is a subrepresentation. By Proposition \ref{prop:pyth-vNA} the subspace $\scrX$ is closed under $\tau_0, \tau_1$. Hence, $\scrX$ is a sub-module of $\scrH$ and we can conclude by Observation \ref{obs:tau-mod-sub-rep} and noting that $\sub{\scrX} = \scrX$.  
\end{proof}

\subsection{Isomorphisms of the full subcategories of diffuse representations}
Recall $\Rep_{\diff}(X)$ is the full subcategory of $\Rep(X)$ whose objects are the {\it diffuse} (Pythagorean) representations of $X$ for $X = \reps$. We now provide a functorial interpretation of Corollary \ref{coro:P-rep-intertwinner}.
Recall that two categories $\cC$ and $\cD$ are {\it isomorphic} when there exists a functor $\Phi:\cC\to\cD$ that is bijective on objects and additionally for each $c,c'$ in $\cC$ the functor $\Phi$ realises a bijection from $\cC(c,c')$ to $\cD(\Phi(c),\Phi(c')).$ This is a stronger notion than being {\it equivalent} categories where we only ask that for any object $d$ in $\cD$ there is an object $c$ in $\cC$ so that $\Phi(c)$ is isomorphic to $d$.

\begin{theorem} \label{theo:isom-diff-functor}
	The four categories $\Rep_{\diff}(X)$ with $X=F,T,V,\cO$ are \emph{isomorphic}.
\end{theorem}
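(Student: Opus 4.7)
The plan is to prove that the restriction functor $\Phi \colon \Rep_{\diff}(\cO) \to \Rep_{\diff}(F)$ is an isomorphism of categories; the cases involving $T$ and $V$ then follow by the same argument applied along the chain $F \subset T \subset V \subset \cO$. First I would verify that $\Phi$ is well-defined on objects: if $\sigma \in \Rep_{\diff}(\cO)$ equals $\Pi^\cO(m)$ (up to unitary equivalence) for some diffuse P-module $m$, then its restriction to $F$ coincides with $\Pi^F(m)$ and therefore lies in $\Rep_{\diff}(F)$. It remains to show that $\Phi$ is fully faithful and bijective on objects.

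The fully faithful part is an immediate reformulation of Corollary \ref{coro:P-rep-intertwinner}. For any pair of objects $\sigma, \sigma' \in \Rep_{\diff}(\cO)$ acting on Hilbert spaces $\scrH, \scrH'$, morphisms $\sigma \to \sigma'$ in $\Rep_{\diff}(\cO)$ are the bounded linear $\cO$-intertwinners $\scrH \to \scrH'$, whereas morphisms $\Phi(\sigma) \to \Phi(\sigma')$ in $\Rep_{\diff}(F)$ are the bounded $F$-intertwinners. These two sets coincide by Corollary \ref{coro:P-rep-intertwinner}, so $\Phi$ induces bijections on morphism spaces.

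For bijectivity on objects, I would show that every $\sigma \in \Rep_{\diff}(F)$ acting on some $\scrH$ admits exactly one lift $\tilde \sigma \in \Rep_{\diff}(\cO)$ on the same $\scrH$ with $\Phi(\tilde \sigma) = \sigma$. For existence, by definition of $\Rep_{\diff}(F)$ there exists a diffuse P-module $(A,B,\fH)$ and a unitary $U \colon \scrH \to \scrH_{A,B}$ with $U \sigma(g) U^* = \sigma_{A,B}(g)$ for all $g \in F$; the same P-module furnishes a canonical extension $\sigma_{A,B}^\cO$ on $\scrH_{A,B}$, and I define $\tilde \sigma(x) := U^* \sigma_{A,B}^\cO(x) U$. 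For uniqueness, if $\tilde \sigma_1, \tilde \sigma_2 \in \Rep_{\diff}(\cO)$ are two diffuse $\cO$-representations on $\scrH$ both restricting to $\sigma$, then $\id_\scrH$ intertwines their $F$-actions, hence by Corollary \ref{coro:P-rep-intertwinner} also their $\cO$-actions, forcing $\tilde \sigma_1 = \tilde \sigma_2$.

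The main subtle point is to check that the lift $\tilde \sigma$ is independent of the choice of P-module $(A,B,\fH)$ and unitary $U$ used in its construction, which is necessary for the assignment $\sigma \mapsto \tilde \sigma$ to define an inverse functor $\Psi$. If $(A',B',\fH')$ and $U'$ form another such choice, then $U' U^{-1} \colon \scrH_{A,B} \to \scrH_{A',B'}$ is a unitary intertwining the $F$-representations arising from the two P-modules and, by Corollary \ref{coro:P-rep-intertwinner}, intertwines their $\cO$-representations as well, so the two pulled-back actions on $\scrH$ agree. This produces an inverse functor $\Psi$ satisfying $\Phi \Psi = \id_{\Rep_{\diff}(F)}$ by construction and $\Psi \Phi = \id_{\Rep_{\diff}(\cO)}$ by the uniqueness of the lift, concluding the proof.
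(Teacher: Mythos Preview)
Your proof is correct and follows essentially the same strategy as the paper: both arguments show that the restriction functor $\Rep_{\diff}(\cO)\to\Rep_{\diff}(F)$ is an isomorphism, and both rely on Corollary~\ref{coro:P-rep-intertwinner} for the bijectivity on morphism spaces. The only minor difference is in how the inverse is built: the paper reconstructs $\tau_0,\tau_1$ directly from the $F$-action via the explicit SOT-limit formulas inside the proof of Proposition~\ref{prop:pyth-vNA}, giving a canonical lift with no choice involved, whereas you produce the lift through an auxiliary choice of P-module and unitary and then invoke Corollary~\ref{coro:P-rep-intertwinner} again to show independence of that choice; both routes are equally valid.
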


\begin{proof}
Fix $Y$ being either $F,T$ or $V$.
Let $(\sigma^\cO, \scrH)$ be a diffuse representation of $\cO$. 
Restricting the action of $\cO$ to $Y$ yields the forgetful functor $\Rep_{\diff}(\cO)\to\Rep_{\diff}(Y)$.
Conversely, let $\sigma^Y:Y\act\scrH$ be a diffuse representation of $Y$. 
The proof of Proposition \ref{prop:pyth-vNA} permits to reconstruct $\tau_0$ and $\tau_1$ canonically from the action of $Y$ on $\scrH$. 
This provides the action $\sigma^\cO$ of the Cuntz algebra and defines a functor $:\Psi:\sigma^Y\mapsto \sigma^\cO$ satisfying $\Pi^\cO=\Psi\circ\Pi^Y$ when $\Pi^Y$ and $\Pi^\cO$ are restricted to diffuse representations (see Subsection \ref{subsec:p-rep-functors} for the definition of $\Pi^Y$).
Corollary \ref{coro:P-rep-intertwinner} shows that $\Psi$ is bijective on the spaces of morphisms.
Moreover, the composition of $\Psi$ with the restriction functor of above is the identity on objects and thus $\Psi$ must be bijective on objects.
We deduce that the two categories are isomorphic.
\end{proof}

\section{Classification of finite dimensional P-representations}\label{sec:classification}

This section contains the main results of the paper. 
We introduce the Pythagorean dimension, classify all finite dimensional diffuse Pythagorean representations, and describe the resulting moduli spaces of irreducible classes as smooth manifolds indexed by dimension. For brevity, several of the results concerning diffuse representations in this section are only stated for $F$; however, they can be readily extended to the $T,V,\cO$ case by applying Theorem \ref{theo:isom-diff-functor}. 

\subsection{Pythagorean dimension} \label{subsec-pyth-dim}

\begin{definition} \label{def:pythag-dim}
	Let $(\sigma^X, \scrH)$ be \emph{any} Pythagorean representation of $X$ for $X = \reps$. 
	Define the \emph{Pythagorean dimension} (in short P-dimension) $\dim_P(\sigma^X)$ of $\sigma^X$ to be infinite if there are no finite dimensional P-module $m$ satisfying $\Pi^X(m)\simeq \sigma^X$.
	Otherwise, the P-dimension of $\sigma^X$ is the minimal dimension $\dim(\fH)$ for all P-modules $m=(A,B,\fH)$ satisfying that $\Pi^X(m)$ is equivalent to $\sigma^X$; in symbols:
	$$\dim_P(\sigma^X) = \min(\dim(\fH):\ \Pi^X(A,B,\fH)\simeq \sigma^X).$$

	Similarly, define the P-dimension $\dim_P(\fH)$ of a P-module $\fH$ to be the P-dimension of the associated P-representation of $F$.
	
	If the P-dimension is $d\in\N$ (resp.~finite), then we may describe $\sigma^X$ as being $d$-dimensional (resp.~finite dimensional). Note that all Pythagorean representations are necessarily infinite dimensional for the usual dimension (see Remark 1.4 in \cite{BW22}), thus this should not cause any confusion.
\end{definition}

By definition $\dim_P(\fH)\leq \dim(\fH)$ for a P-module $\fH$ where the first refers to the P-dimension and the second to the usual dimension of $\fH$ as a complex vector space.
Moreover, observe $\dim_P(\sigma^X)$ does not depend on $X=F,T,V,\cO$.

\begin{example}
\begin{enumerate}
\item Using the classification of \cite{BW23} we may deduce the following (see also Subsection \ref{subsec:atomic-case}).
Let $p$ be an infinite binary string and let $\lambda_{Y/Y_p}:Y\act\ell^2(Y/Y_p)$ be the associated quasi-regular representation of $Y$ for $Y$ in $F,T,V$. Remove the case when $Y=F$ and $p$ a dyadic rational (i.e. $p$ is eventually constant). 
If $p$ is eventually periodic then $\dim_P(\lambda_{Y/Y_p})=d$ where $d$ is the length of its period. Otherwise, in a future work it will be shown that $\lambda_{Y/Y_p}$ is indeed an atomic representations coming from an infinite dimensional P-module and thus $\dim_P(\lambda_{Y/Y_p})=\infty$.
\item
Similarly, if $w$ is a prime binary word of length $d\geq 2$, $z\in \bS_1$, and $m_{w,z}$ is the associated P-module of Subsection \ref{subsec:atomic-case}, then $\dim_P(m_{w,z})=d$. When $z=1$, then the associated $Y$-representation is $\lambda_{Y/Y_p}$ (where $p=w^\infty$ and $Y$ in $F,T,V$) and for nontrivial $z$ is monomial.
\item
If $(A,B,\C)$ is a P-module, then necessarily it is of P-dimension 1.
\end{enumerate}
\end{example}

The next lemma shows that given a P-representation $F\act \scrH$ we may only consider sub-modules of $\scrH$ for computing $\dim_P(\scrH)$.

\begin{lemma} \label{lem:pyth-dim-tau-mod}
	If $\sigma:F\act \scrH$ is a diffuse representation then its P-dimension is equal to the minimum of the dimension of all \emph{complete} sub-modules $\scrX$ in $\scrH$.	
\end{lemma}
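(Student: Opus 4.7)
The plan is to prove the two inequalities separately; the key is to translate between P-modules $(A,B,\fH)$ (which come with their own ambient Hilbert space $\scrH_{A,B}$) and sub-modules of the given $\scrH$.

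\textbf{Upper bound $\dim_P(\sigma)\leq \min\dim(\scrX)$.} Take any complete sub-module $\scrX\subset\scrH$. By Observation \ref{obs:tau-mod-sub-rep} the triple $(\tau_0\restriction_\scrX,\tau_1\restriction_\scrX,\scrX)$ is a P-module whose associated Pythagorean representation is unitarily equivalent to $\sigma_\scrX$ (the subrepresentation of $\sigma$ on $\sub{\scrX}$). Since $\scrX$ is \emph{complete}, Lemma \ref{lem:complete-mod} gives $\sub{\scrX}=\scrH$, hence $\sigma_\scrX=\sigma$. Thus $\Pi^F(\tau_0\restriction_\scrX,\tau_1\restriction_\scrX,\scrX)\simeq \sigma$, which yields $\dim_P(\sigma)\leq\dim(\scrX)$. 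Taking the infimum over all complete sub-modules gives the desired inequality. Note this direction does not require diffuseness.

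\textbf{Lower bound $\min\dim(\scrX)\leq \dim_P(\sigma)$.} Here diffuseness is essential. Let $(A,B,\fH)$ be any P-module realising $\dim_P(\sigma)$, so that there exists a unitary $F$-intertwinner $U:\scrH_{A,B}\to\scrH$. Because $\sigma$ is diffuse and unitary equivalence preserves diffuseness (see the discussion in Subsection \ref{subsec:diff-atom-rep}), the representation $\Pi^\cO(A,B,\fH)$ is also diffuse. By Theorem \ref{theo:isom-diff-functor} (equivalently Corollary \ref{coro:P-rep-intertwinner}) the map $U$, being an $F$-intertwinner between two diffuse representations, automatically intertwines the whole Cuntz algebra action. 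In particular $U$ commutes with $\tau_0,\tau_1$, so it sends sub-modules of $(\tau_0,\tau_1,\scrH_{A,B})$ to sub-modules of $(\tau_0,\tau_1,\scrH)$, preserving completeness. By Remark \ref{rem:complete-sub-rep}, the canonical copy of $\fH$ inside $\scrH_{A,B}$ is a complete sub-module of $\scrH_{A,B}$; therefore $U(\fH)$ is a complete sub-module of $\scrH$ of the same dimension $\dim(\fH)=\dim_P(\sigma)$. This gives the reverse inequality.

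\textbf{Main obstacle.} The only nonobvious step is upgrading the unitary $F$-intertwinner $U$ to an $\cO$-intertwinner, so that it preserves the P-module structure $(\tau_0,\tau_1)$ rather than merely the $F$-action. This is precisely where the diffuse hypothesis is used, via the previously established rigidity Theorem \ref{theo:isom-diff-functor}. Everything else reduces to bookkeeping: Remark \ref{rem:complete-sub-rep} for why $\fH\subset\scrH_{A,B}$ is complete, and Lemma \ref{lem:complete-mod} to guarantee that a complete sub-module of $\scrH$ regenerates the whole representation.
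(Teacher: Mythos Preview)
Your proof is correct and follows essentially the same approach as the paper's: both directions match, and the key step of upgrading the unitary $F$-intertwinner $U$ to an $\cO$-intertwinner via Corollary~\ref{coro:P-rep-intertwinner} (using diffuseness) is exactly what the paper does. The only cosmetic difference is that the paper verifies explicitly that $U(\fH)$ is a complete sub-module by computing $\tau_i(U\eta)=U(\ti\tau_i\eta)$ and $\tau_i^*(U\eta)=U(\ti\tau_i^*\eta)$, whereas you package this as ``$U$ preserves sub-modules and completeness'' and invoke Remark~\ref{rem:complete-sub-rep}.
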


\begin{proof}
Consider a diffuse representation $\sigma:F\act \scrH$.
	By Remark \ref{obs:tau-mod-sub-rep} and Lemma \ref{lem:complete-mod} it is obvious that the Pythagorean dimension must be equal or smaller than the dimension of any complete sub-module of $\scrH$. Hence, it is sufficient to show there exists a complete sub-module of $\scrH$ with usual dimension equal to $\dim_P(\sigma)$.
	
Let $\ti m:=(\ti A, \ti B, \ti\fH)$ be a P-module such that $\dim(\ti \fH) = \dim_P(\sigma)$ and $(\ti \sigma,\ti\scrH):=\Pi^F(\ti m)\simeq (\sigma,\scrH)$.
Then there is a unitary intertwinner $\Theta : \ti \scrH \rightarrow \scrH$. 
Set $\fX := \Theta(\ti \fH)$ and $\xi := \Theta(\eta) \in \fX$ for some $\eta \in \ti \fH$. Then we have:
	\[\tau_0(\xi) = \tau_0(\Theta(\eta)) = \Theta(\ti \tau_0(\eta)) = \Theta(\ti A \eta) \in \fX\]
	where we use the fact that $\ti A = \ti\tau_0\restriction_{\ti \fH}$ and $\Theta$ intertwinnes $\tau_i$ with $ \ti\tau_i$ by Corollary \ref{coro:P-rep-intertwinner}. A similar reasoning yields that $\tau_1(\xi) \in \fX$, thus proving that $\fX$ is a sub-module of $\scrH$ with $\dim(\fX) = \dim(\ti \fH) = \dim_P(\sigma)$. 
	
To finish the proof it is sufficient to show that $\fX$ is a complete sub-module of $\scrH$, i.e.~$\langle \fX\rangle=\scrH$.
	By another application of Corollary \ref{coro:P-rep-intertwinner} we have:
	\[\tau^*_i(\xi) = \tau^*_i(\Theta(\eta)) = \Theta(\ti\tau^*_i(\eta)) \textrm{ for } i = 0,1.\]
	Further, by definition $\sub{\ti\fH} = \ti\scrH$. Hence, with the above equation we can conclude that $\sub{\fX} = \Theta(\sub{\ti\fH}) = \scrH$. 
\end{proof}

\subsection{Existence of a smallest nontrivial module}
\label{subsec:smallest-P-mod}

For an atomic representation, its P-dimension can be easily determined (see Subsection \ref{subsec:atomic-case}) by using the classification of such representations in \cite{BW23}.
However, for diffuse representations, a priori, it is not clear what is its P-dimension for it requires knowledge of the complete sub-modules of $\scrH$ as demonstrated by Lemma \ref{lem:pyth-dim-tau-mod}.

It is easy to see that the set of sub-modules in $\scrH$ forms a directed set. We shall prove that for finite P-dimensional irreducible P-representations this directed set contains a \emph{unique smallest nontrivial sub-module} which is necessarily contained in the P-module $\fH$. {\bf This is a very surprising result and the most important of our article}. 

\begin{theorem} \label{theo:min-invariant-subspace}
Let $(A,B, \fH)$ be a diffuse finite dimensional P-module whose associated P-representation $\sigma:F\act\scrH$ is irreducible. Then there exists a nontrivial sub-module $\fK \subset \fH$ such that any sub-module $\scrX \subset \scrH$ must contain $\fK$.
	
In particular, $\fK \subset \fH$ is the unique smallest nontrivial sub-module in $\scrH$ and satisfies $\dim(\fK) = \dim_P(\sigma)$. Furthermore, if $\fH$ is an irreducible P-module then $\fK = \fH$.
\end{theorem}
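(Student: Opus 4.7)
My plan is to build $\fK$ inside $\fH$ as the minimal nontrivial sub-module and then promote this minimality from $\fH$ to all of $\scrH$ by applying the same principle twice. The principle asserts that for every \emph{complete} sub-module $\fL\subset\scrH$, every P-module endomorphism $T\colon\fL\to\fL$ is scalar. To prove it I would send $T$ through Jones' functor (Subsection \ref{subsec:p-rep-functors}) to an $F$-intertwiner $\hat T\colon\scrH_\fL\to\scrH_\fL$ of the Jones representation attached to $\fL$. The P-module inclusion $\fL\hookrightarrow\scrH$, with $\scrH$ carrying the structure $(\tau_0,\tau_1)$, extends by the same functoriality to an isometric $F$-intertwiner $\Theta\colon\scrH_\fL\to\scrH$; since $\fL$ is complete, Lemma \ref{lem:complete-mod} shows the image of $\Theta$ is all of $\scrH$, so $\Theta$ is unitary. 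Conjugating $\hat T$ by $\Theta$ yields a self-intertwiner of the irreducible $\sigma$, which Schur forces to be scalar, and restricting back to $\fL$ concludes that $T$ is scalar.

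With this principle at hand I would choose $\fK\subset\fH$ to be any nontrivial sub-module of minimal dimension. Uniqueness, and the fact that $\fK$ is contained in every other nontrivial sub-module of $\fH$, are proved by contradiction: given a second nontrivial sub-module $\fL\subset\fH$ with $\fK\cap\fL=\{0\}$, both $\fK$ and $\fL$ are complete in $\scrH$ by irreducibility of $\sigma$, and so is the subspace $\fK+\fL\subset\fH$, which equals the algebraic direct sum $\fK\oplus\fL$ on which $A$ and $B$ act block-diagonally. The projection of this subspace onto $\fK$ parallel to $\fL$ is then a bounded P-module endomorphism that is plainly non-scalar, contradicting the principle. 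Hence $\fK\cap\fL\neq 0$, and the minimality of $\dim(\fK)$ forces $\fK\subset\fL$.

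The same projection trick, now with an arbitrary nontrivial sub-module $\scrX\subset\scrH$ in place of $\fL$, should promote $\fK$ to a sub-module of $\scrX$. Since $\fK$ is finite-dimensional and $\scrX$ is closed, the sum $\fK+\scrX$ is again closed; $\scrX$ is complete in $\scrH$ by irreducibility, so $\fK+\scrX$ is as well. If $\fK\cap\scrX=\{0\}$, the projection onto $\fK$ parallel to $\scrX$ is once more a bounded non-scalar P-module endomorphism, and the same contradiction closes the argument. Therefore $\fK\cap\scrX\neq 0$, and the minimality of $\fK$ inside $\fH$ gives $\fK\subset\scrX$. The dimension claim then follows from Lemma \ref{lem:pyth-dim-tau-mod}, since $\fK$ is by construction the smallest complete sub-module of $\scrH$; and when $(A,B,\fH)$ is itself an irreducible P-module the only nontrivial sub-module of $\fH$ is $\fH$, forcing $\fK=\fH$.

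The step I expect to require the most care is the endomorphism principle when $\fL$ is infinite-dimensional, as needed once an arbitrary $\scrX$ enters the picture. One must verify that Jones' functor assembles a P-module endomorphism of $\fL$ into a bounded $F$-intertwiner $\hat T$ on $\scrH_\fL$, and that the comparison map $\Theta\colon\scrH_\fL\to\scrH$ remains unitary. Both should follow from a direct inspection of the inductive-limit construction: the leaf-wise maps $\theta_t\colon\fH_t^\fL\to\fH_t^\fL$ induced by $T$ are compatible with the structural isometries because $T$ commutes with the restrictions of $\tau_0$ and $\tau_1$ to $\fL$, and the inclusion $\fL\hookrightarrow\scrH$ is leaf-wise isometric with $\sub{\fL}=\scrH$.
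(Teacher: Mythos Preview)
Your argument is correct and takes a genuinely different route from the paper's. Both proofs reduce to showing that a chosen minimal sub-module $\fK\subset\fH$ meets every nontrivial sub-module $\scrX\subset\scrH$; the divergence is in how this intersection is established. The paper proceeds analytically: given a unit vector $z\in\scrX$, it approximates $z$ by a tree-vector with leaf components in $\fK$, extracts a leaf where the normalised component is $\varepsilon$-close to a unit vector of $\fK$, and then invokes compactness of the unit sphere of the \emph{finite dimensional} $\fK$ to produce a common limit point in $\fK\cap\scrX$. Your approach is instead algebraic: you observe that $\fK+\scrX$ is a closed (hence complete) sub-module, and that the oblique projection onto $\fK$ along $\scrX$ is a bounded P-module endomorphism; functoriality (Subsection~\ref{subsec:p-rep-functors}) together with Observation~\ref{obs:tau-mod-sub-rep} and Lemma~\ref{lem:complete-mod} promotes it to a self-intertwiner of the irreducible $\sigma$, and Schur forces it to be scalar, a contradiction. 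Finite dimensionality of $\fK$ is still essential in your version---it guarantees that $\fK+\scrX$ is closed and that the oblique projection is bounded---so neither proof extends to infinite P-dimension (cf.~Remark~\ref{rem:THB}). Your route is more categorical and foreshadows the intertwiner arguments of Theorems~\ref{theo:pythag-irrep} and~\ref{theo:pythag-equiv}; the paper's route is more self-contained and avoids invoking the functorial machinery at this stage. One small point worth making explicit in a write-up: the boundedness of the projection onto $\fK$ parallel to $\scrX$ follows from the closed graph (or open mapping) theorem once $\fK+\scrX$ is known to be closed.
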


\begin{proof}
	First, observe the assumption that $\sigma$ is irreducible assures that all nontrivial sub-modules of $\scrH$ are complete by Corollary \ref{coro:diffuse-sub-rep}.
	Let $\scrX$ be any nontrivial sub-module of $\scrH$ and let $\fK$ be a (nontrivial) sub-module of $\scrH$ which satisfies $\dim(\fK) = \dim_P(\sigma)$ (existence of such a sub-module is guaranteed by Lemma \ref{lem:pyth-dim-tau-mod}). In particular, $\fK$ is a finite dimensional subspace and is a minimal sub-module in $\scrH$ with respect to set inclusion.
We are going to show that $\scrX\cap \fK$ is nontrivial (by showing that their unit sphere intersect) which would imply that $\fK$ is contained in $\scrX$.
	
Consider an arbitrary unit element $z \in \scrX$ and fix $0 < \varepsilon < 1$. Since $\sub{\fK} = \scrH$ there exists $[t, \xi] \in \scrH$ such that $\norm{z - [t, \xi]} < \varepsilon$ and $\xi_\nu \in \fK$ for $\nu \in \Leaf(t)$ (recall $\xi_\nu \in \fK$ is the component of $\xi$ under the leaf $\nu$). This can be re-formulated as
	\[\norm{z - [t, \xi]}^2 = \norm{z - \sum_{\nu \in \Leaf(t)} \tau^*_\nu(\xi_\nu)}^2 = \sum_{\nu\in\Leaf(t)} \norm{\tau_\nu(z) - \xi_\nu}^2 < \varepsilon^2.\]
	Further, we can assume that $\xi_\nu = 0$ whenever $\tau_\nu(z) = 0$ as this will continue to satisfy the inequality $\norm{z - [t, \xi]} < \varepsilon$.
	Then there exists a vertex $\omega \in \Leaf(t)$ such that
	\[\tau_\omega(z) \neq 0 \textrm{ and } \norm{\frac{\tau_\omega(z)}{\norm{\tau_\omega(z)}} - \frac{\xi_\omega}{\norm{\tau_\omega(z)}}} < \varepsilon.\]
	Indeed, if such a vertex did not exist, then that will imply 
	\[\sum_{\nu\in\Leaf(t)} \norm{\tau_\nu(z) - \xi_\nu}^2 \geq \varepsilon^2\sum_{\nu\in\Leaf(t)} \norm{\tau_\nu(z)}^2 = \varepsilon^2 \]
	which is a contradiction.
Since $\varepsilon<1$ we deduce that $\xi_\omega\neq 0$.
	Moreover,
	\begin{align*}
		\norm{\frac{\tau_\omega(z)}{\norm{\tau_\omega(z)}} - \frac{\xi_\omega}{\norm{\xi_\omega}}} 
		&\leq \norm{\frac{\tau_\omega(z)}{\norm{\tau_\omega(z)}} - \frac{\xi_\omega}{\norm{\tau_\omega(z)}}} +
		\norm{\frac{\xi_\omega}{\norm{\tau_\omega(z)}} - \frac{\xi_\omega}{\norm{\xi_\omega}}} \\
		&< \varepsilon + \norm{\xi_\omega} \cdot \left| \frac{1}{\norm{\tau_\omega(z)}} - \frac{1}{\norm{\xi_\omega}}\right|
		 =\varepsilon + \left| \frac{\Vert \xi_\omega\Vert}{\Vert\tau_\omega(z)\Vert} - 1 \right|\\
		 & = \varepsilon +  \frac{1}{\| \tau_\omega(z) \|} \cdot  \left| \|\xi_\omega \| -  \| \tau_\omega(z) \| \right|< 2 \varepsilon.
	\end{align*}
	
We have shown that for any $\varepsilon>0$ there exist two unit vectors $y\in\fK$ and $z\in\scrX$ so that $\norm{y-z}<\varepsilon.$
	Therefore, from the above procedure, we can construct a sequence of unit vectors $(y_n : n \geq 1)$ of $\fK$ and another sequence of unit vectors $(z_n : n \geq 1)$ in $\scrX$ such that $\norm{y_n - z_n} \rightarrow 0$ as $n \rightarrow \infty$. By compactness of the unit sphere in the \emph{finite dimensional} space $\fK$, we can pass to a subsequence and assume that $(y_n)$ converges to a unit vector $y \in \fK$. Hence, it follows that $(z_n)$ also converges to $y$. 
Since $\scrX$ is a sub-module it is in particular a Hilbert space and thus topologically closed inside $\scrH$. This implies that the limit point $y$ must be in $\scrX$.
	
We have proven that the intersection $\fK\cap \scrX$ is nontrivial since it contains a vector of norm one. 
Moreover, since $\fK$ and $\scrX$ are P-modules (they are sub-modules of $(\tau_0^*,\tau_1^*,\scrH$)) we deduce that $\fK\cap\scrX$ is a nontrivial sub-module of $\scrH$.
By minimality of the P-module $\fK$ we deduce that $\fK=\fK\cap \scrX$ and thus $\fK$ is contained in $\scrX$.
Hence, $\fK$ is contained in {\it any} nontrivial sub-module of $\scrH$. 
The theorem easily follows.
\end{proof}

\begin{remark}\label{rem:THB}
Theorem \ref{theo:min-invariant-subspace} will later be extended (in the finite P-dimensional situation) to the atomic case and moreover to the reducible case where $\fK$ will become the smallest {\it complete} sub-module rather than the smallest {\it nontrivial} sub-module.
However, the proof of the existence of $\fK$ uses the compactness of the unit sphere of the P-module $\fH$ and thus cannot be adapted to {\it infinite P-dimensional} P-modules. 
An explicit counterexample in the diffuse case is given by $\fH=\ell^2(\Z)$ and $A=B=S/\sqrt 2$ which is the unilateral shift operator divided by $\sqrt 2$. Each subspace $\fK_j:=\ell^2(\{j,j+1,j+2,\cdots\})$ with $j\in\Z$ defines a complete sub-module but clearly their intersection is trivial.
By a similar reasoning, instead letting $A=S$ and $B=0$ provides a counterexample in the atomic case.
\end{remark}

\subsection{From intertwinners of diffuse P-representations to intertwinners of P-modules}

\subsubsection{Irreducibility and equivalence.} \label{subsec:pyth-irrep-equiv}

We can now provide a complete classification of finite dimensional diffuse P-representations. 
Surprisingly, while these results are significant and very nontrivial, the techniques used in the proofs are elementary in nature.

\begin{theorem} \label{theo:pythag-irrep}
	Let $m=(A,B, \fH)$ be a diffuse finite dimensional P-module. Then the associated P-representation $\Pi^F(m):=(\sigma,\scrH)$ of $F$ is irreducible if and only if $\fH$ is indecomposable.
\end{theorem}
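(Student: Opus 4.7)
My plan is to prove the two implications separately, both by contrapositive, borrowing the compactness technique from the proof of Theorem \ref{theo:min-invariant-subspace}.

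For the direction ``$\sigma$ irreducible $\Rightarrow$ $\fH$ indecomposable'', I would assume instead that $\fH = \fH_1 \oplus \fH_2 \oplus \fZ$ is a decomposition witnessing that $\fH$ is decomposable, with $\fH_1, \fH_2$ two nontrivial orthogonal sub-modules of $\fH$. Via the inclusion $\fH \subset \scrH$ they are also orthogonal sub-modules of $(\tau_0, \tau_1, \scrH)$. The orthogonality computation at the end of the proof of Lemma \ref{lem:complete-mod} shows that orthogonal sub-modules $\scrX \perp \scrY$ of $\scrH$ produce orthogonal generated subspaces $\sub{\scrX} \perp \sub{\scrY}$; applied here, this gives two nontrivial orthogonal $F$-invariant subspaces of $\scrH$, hence reducibility of $\sigma$.

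For the converse ``$\fH$ indecomposable $\Rightarrow$ $\sigma$ irreducible'', I would suppose $\sigma$ is reducible. By Corollary \ref{coro:diffuse-sub-rep} there is an orthogonal splitting $\scrH = \scrX \oplus \scrY$ where both pieces are nontrivial sub-modules of $(\tau_0, \tau_1, \scrH)$. The candidate decomposition of $\fH$ is
\[
\fH = (\fH \cap \scrX) \oplus (\fH \cap \scrY) \oplus \fZ'.
\]
Both intersections are automatically closed under $A = \tau_0\restriction_{\fH}$ and $B = \tau_1\restriction_{\fH}$ (since $\scrX, \scrY$ are closed under $\tau_0, \tau_1$), and they are mutually orthogonal. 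So the whole problem reduces to showing that each intersection is nonzero; then $\fH$ is decomposable, contradicting the hypothesis.

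The main obstacle, and the only place where finite dimensionality is used, is showing $\fH \cap \scrX \neq 0$ (the argument for $\scrY$ being identical). I would adapt the compactness technique of Theorem \ref{theo:min-invariant-subspace}. Pick any unit $z \in \scrX$. Since $\fH$ is a complete sub-module of $\scrH$ by Remark \ref{rem:complete-sub-rep}, the $\fH$-decorated trees $[t, \xi]$ are dense in $\scrH$. The same leaf-selection estimate as in Theorem \ref{theo:min-invariant-subspace} produces, for every $\varepsilon > 0$, a leaf $\omega$ with $\tau_\omega z \neq 0$ and
\[
\left\| \frac{\tau_\omega z}{\|\tau_\omega z\|} - \frac{\xi_\omega}{\|\xi_\omega\|} \right\| < 2\varepsilon.
\]
Running this for $\varepsilon_n = 1/n$ yields unit vectors $z_n \in \scrX$ and $\eta_n \in \fH$ with $\|\eta_n - z_n\| \to 0$. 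Compactness of the unit sphere of the finite dimensional space $\fH$ lets me extract a subsequence with $\eta_n \to \eta \in \fH$; then $z_n \to \eta$ as well, and norm-closedness of $\scrX$ forces $\eta \in \scrX$. Hence $\eta$ is a unit vector in $\fH \cap \scrX$. Exactly as in Remark \ref{rem:THB}, this is the step that fails for infinite P-dimensional modules, so the statement does not extend beyond the present hypotheses without further work.
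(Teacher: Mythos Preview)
Your proof is correct, but the hard direction takes a genuinely different route from the paper.

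For the implication ``$\fH$ indecomposable $\Rightarrow$ $\sigma$ irreducible'', the paper does not split $\scrH$ into two pieces and intersect each with $\fH$. Instead it fixes a single nontrivial $F$-invariant subspace $\scrX\subset\scrH$, lets $q$ be the orthogonal projection onto $\scrX$, and argues that $q=\id$. The key observation is that $\ker(q\restriction_\fH)$ is a sub-module of $\fH$ (because $q$ commutes with $\tau_0,\tau_1$); if it were nontrivial, indecomposability of $\fH$ would force it to be \emph{complete}, hence to generate all of $\scrH$, forcing $q=0$, a contradiction. Thus $q\restriction_\fH$ is injective, hence bounded below on the finite dimensional $\fH$, and this lower bound propagates to all of $\scrK$ via the tree structure, giving $q$ injective and therefore $q=\id$.

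Your approach instead recycles the compactness argument of Theorem~\ref{theo:min-invariant-subspace} with $\fH$ in the role of $\fK$, producing a nonzero vector in $\fH\cap\scrX$ and symmetrically in $\fH\cap\scrY$, thereby exhibiting two orthogonal nontrivial sub-modules of $\fH$. This is perfectly valid and arguably more economical, since the technique is already on the table. The paper's projection argument, on the other hand, never needs the complementary piece $\scrY$ and gives slightly more: it shows directly that any nontrivial subrepresentation is all of $\scrH$, and the mechanism (intertwiners restrict well to $\fH$) foreshadows the later results on morphism spaces in Theorem~\ref{theo:pythag-equiv} and Corollary~\ref{cor:intertwinner-rep-to-mod}.
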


\begin{proof}
Consider a P-module $m=(A,B,\fH)$ and the associated P-representation $\Pi^F(m)$ denoted $(\sigma,\scrH)$.

Suppose that $\fH$ is decomposable and $\fX_1, \fX_2$ are two orthogonal sub-modules of $\fH$. Then it is clear that $\sigma_{\fX_1}, \sigma_{\fX_2}$ are two orthogonal nontrivial subrepresentations of $\sigma$, and thus $\sigma$ is reducible. This proves the easy direction.
	
Suppose now that $\fH$ is indecomposable. 
Further, let $\scrX \subset \scrH$ be any nontrivial closed subspace which is invariant under $\sigma$ (i.e.~a subrepresentation). 
Denote $\Theta$ to be the orthogonal projection of $\scrH$.
We are going to show that $\Theta$ is injective and thus $\Theta=\id$.

\noindent	\textbf{Claim 1.} We have that $\ker(\Theta)\cap \fH=\{0\}.$

Consider $\fK$ the set of $\xi\in\fH$ such that $\Theta(\xi)=0$.
Since $\Theta$ intertwinnes the $F$-action it commutes with $\tau_0,\tau_1$ by Proposition \ref{prop:pyth-vNA}.
We deduce that $\fK$ is a sub-module of $\fH$.
If $\fK$ is nontrivial, then $\fK$ must be complete inside $\fH$ since this latter is indecomposable.
By Remark \ref{rem:complete-sub-rep} $\fK$ is complete inside $\scrH$ and thus $\langle \fK\rangle=\scrH$.
Since $\Theta$ commutes with the adjoints $\tau_0^*,\tau_1^*$ we deduce that $\langle \fK\rangle$ is contained inside $\ker(\Theta)$ and thus $\Theta=0$, a contradiction.
This proves the claim. 
	
\noindent\textbf{Claim 2.} The map $\Theta$ is injective.

We have proven that the restriction $\Theta\restriction_{\fH}$ is injective. Since $\dim(\fH)<\infty$ we deduce that $\Theta\restriction_{\fH}$ is bounded below.
Hence, there exists $c>0$ satisfying that $\|\Theta(\xi)\|\geq c\|\xi\|$ for all $\xi\in\fH$.
Let us show that $\Theta$ is bounded below on the whole space $\scrH$.
Consider any $z := [t, \xi]$ in the dense subspace $\scrK$ of $\scrH$ which we can express as 
	\[z = \sum_{\nu \in \Leaf(t)} \tau_\nu^*(\xi_\nu)\]
	where $\xi_\nu \in \fH$ is the component of $\xi$ under the leaf $\nu$. In addition, we have the following relations:
	\begin{align*}
		\norm{\Theta(z)}^2 &= \norm{\Theta(\sum_{\nu \in \Leaf(t)} \tau_\nu^*(\xi_\nu))}^2 
		= \norm{\sum_{\nu \in \Leaf(t)} \tau_\nu^*(\Theta(\xi_\nu))}^2 \\
		&= \sum_{\nu \in \Leaf(t)} \norm{\Theta(\xi_\nu)}^2
		\geq \sum_{\nu \in \Leaf(t)} c^2\norm{\xi_\nu}^2 = c^2\norm{\xi}^2.
	\end{align*}
	The above shows that $\Theta$ is bounded below on a dense subspace $\scrK \subset \scrH$. 
	By density and continuity of $\Theta$ (it is a projection) we deduce that $\Theta$ is injective. Hence, $\Theta=\id$ and $\sigma$ is irreducible.
\end{proof}

\begin{corollary} \label{coro:pythag-irrep}
	Every diffuse finite dimensional P-representation of $F$ is a finite direct sum of irreducible diffuse P-representations. 	
\end{corollary}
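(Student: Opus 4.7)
The plan is to reduce the statement to Theorem \ref{theo:pythag-irrep} by first decomposing the given finite dimensional diffuse P-module $(A,B,\fH)$ into indecomposable sub-modules at the P-module level, and then transporting the decomposition to the level of $F$-representations via the functor $\Pi^F$ and its compatibility with direct sums. The overall strategy is: decompose $\fH$, apply Theorem \ref{theo:pythag-irrep} on each piece, and check nothing is lost when passing to $\scrH$.

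First I would fix a maximal family $\fH_1,\dots,\fH_n$ of pairwise orthogonal, nontrivial, indecomposable sub-modules of $\fH$. Such a family exists and is finite because $\fH$ is finite dimensional (so $n\leq \dim(\fH)$). Set $\fH_0:=\fH_1\oplus\cdots\oplus\fH_n$ and $\fZ:=\fH_0^\perp$. The key technical step is to show that $\fZ$ is a residual subspace of $\fH$, so that $\fH_0$ is a complete sub-module. If on the contrary $\fZ$ contained a nontrivial sub-module $\fK$, then a straightforward induction on $\dim(\fK)$ produces an indecomposable nontrivial sub-module $\fK'\subset\fK$ (at each stage, a decomposable module splits into two nontrivial sub-modules of strictly smaller dimension, and we iterate until we land on an indecomposable one). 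Since $\fK'\subset\fZ$ is orthogonal to each $\fH_i$, the enlarged family $\fH_1,\dots,\fH_n,\fK'$ would contradict maximality. I expect this maximality/orthogonality bookkeeping to be the only mildly delicate point of the argument.

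Next I would observe that each $\fH_i$ is automatically diffuse (diffuseness passes to sub-modules, since the defining condition $\lim_n p_n\xi=0$ is inherited) and indecomposable, so by Theorem \ref{theo:pythag-irrep} the associated P-representation $\sigma_i:=\Pi^F(\fH_i)$ is an irreducible diffuse Pythagorean representation of $F$. By the compatibility of the Jones construction with direct sums recalled in Subsection \ref{subsec:p-rep-functors}, we have a canonical identification
\[
\Pi^F(\fH_0)\;=\;\Pi^F\!\left(\bigoplus_{i=1}^n\fH_i\right)\;\cong\;\bigoplus_{i=1}^n\sigma_i.
\]

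Finally, to identify $\Pi^F(\fH_0)$ with the original $\sigma=\Pi^F(\fH)$, I would use that $\fH_0$ is a complete sub-module of $\fH$. By Remark \ref{rem:complete-sub-rep}, viewing $\fH_0\subset\fH\subset\scrH$, we have $\sub{\fH_0}=\scrH$. Functoriality (Subsection \ref{subsec:p-rep-functors}) applied to the isometric inclusion $\fH_0\hookrightarrow\fH$ provides a morphism $\Theta:\scrH_{\fH_0}\to\scrH$ of $F$-representations; on the dense subspaces $\fH_{0,t}\subset\fH_t$ this map is isometric and its image is exactly $\sub{\fH_0}=\scrH$, so $\Theta$ extends to a unitary equivalence. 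Combining these two identifications yields $\sigma\cong\sigma_1\oplus\cdots\oplus\sigma_n$, a finite direct sum of irreducible diffuse Pythagorean representations, as required.
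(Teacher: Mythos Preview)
Your proposal is correct and follows essentially the same route as the paper: decompose the finite dimensional diffuse P-module into indecomposable sub-modules plus a residual subspace, apply Theorem \ref{theo:pythag-irrep} to each piece, and use that a complete sub-module induces the same representation. The only difference is that the paper outsources the P-module decomposition $\fH=\bigoplus_i\fH_i\oplus\fZ$ to Proposition 2.19 of \cite{BW23}, whereas you reprove it directly via a maximality argument; your inline argument is fine and makes the note self-contained.
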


\begin{proof}
	This follows from the previous theorem and by the fact, from \cite{BW23}, that all diffuse finite dimensional P-module decomposes in a finite direct sum of diffuse irreducible P-modules (plus possibly a residual subspace).
	\end{proof}

We now provide necessary and sufficient conditions for when irreducible Pythagorean representations are equivalent to each other.

\begin{theorem} \label{theo:pythag-equiv}
Consider two P-modules $(A, B, \fH)$ and $(\ti A, \ti B, \ti\fH)$ that are indecomposable, diffuse and finite dimensional.
Let $(\sigma,\scrH)$ and $(\ti\sigma,\ti\scrH)$ be their associated {\it irreducible} representations of $F$ and denote by $\fX$ and $\ti\fX$ the unique smallest nontrivial sub-modules of $\fH, \ti\fH$, respectively.
	
	The representations $\sigma$ and $\ti\sigma$ of $F$ are unitarily equivalent if and only if there exists a unitary operator $u : \fX \rightarrow \ti \fX$ which intertwinnes the sub-modules $\fX, \ti\fX$.
	In particular, if $\fH, \ti\fH$ are irreducible P-modules, then $\sigma$ is equivalent to $\ti\sigma$ if and only if there exists a unitary operator $u:\fH\to\ti\fH$ satisfying that 
	$$u\circ A= \ti A\circ u \text{ and } u\circ B = \ti B\circ u.$$
	The same statement holds if we replace $F$ by $T,V$ or $\cO$.
\end{theorem}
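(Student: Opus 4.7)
The plan is to reduce to the case $X=F$ and combine Theorem \ref{theo:min-invariant-subspace} with Corollary \ref{coro:P-rep-intertwinner}.

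For the `if' direction, given a unitary P-module intertwiner $U:\fX\to\ti\fX$, I would first observe that because $\sigma$ is irreducible every nontrivial sub-module of $\scrH$ is complete: indeed Corollary \ref{coro:diffuse-sub-rep} expresses any subrepresentation of $\sigma$ as $\sub{\scrX}$ for some sub-module $\scrX$, and Lemma \ref{lem:complete-mod} says $\scrX$ is complete if and only if $\sub{\scrX}=\scrH$, which irreducibility of $\sigma$ forces. Hence both $\fX$ and $\ti\fX$ are complete sub-modules of $\fH$ and $\ti\fH$, so the Pythagorean representations they generate recover $(\sigma,\scrH)$ and $(\ti\sigma,\ti\scrH)$ (cf.~Remark \ref{rem:complete-sub-rep}). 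Functoriality of the Jones construction (Subsection \ref{subsec:p-rep-functors}) then promotes $U$ to a unitary equivalence $\Pi^F(U):\scrH\to\ti\scrH$ of $F$-representations.

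For the `only if' direction, suppose $\Theta:\scrH\to\ti\scrH$ is a unitary $F$-intertwiner. By Corollary \ref{coro:P-rep-intertwinner}, $\Theta$ also intertwines the Cuntz algebra actions, so $\Theta\tau_i=\ti\tau_i\Theta$ for $i=0,1$. In particular $\Theta$ is a unitary P-module intertwiner between $(\tau_0,\tau_1,\scrH)$ and $(\ti\tau_0,\ti\tau_1,\ti\scrH)$, and therefore $\Theta(\fX)$ is a nontrivial sub-module of $\ti\scrH$. Theorem \ref{theo:min-invariant-subspace} applied to $\ti\sigma$ gives $\ti\fX\subseteq\Theta(\fX)$; applying the same theorem to $\sigma$ via $\Theta^{-1}$ yields $\fX\subseteq\Theta^{-1}(\ti\fX)$, i.e.~$\Theta(\fX)\subseteq\ti\fX$. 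Hence $\Theta(\fX)=\ti\fX$, and since $\dim(\fX)=\dim(\ti\fX)=\dim_P(\sigma)$ by Theorem \ref{theo:min-invariant-subspace}, the restriction $U:=\Theta\restriction_{\fX}:\fX\to\ti\fX$ is the desired unitary P-module intertwiner. When $\fH$ and $\ti\fH$ are themselves irreducible P-modules the same theorem identifies $\fX=\fH$ and $\ti\fX=\ti\fH$, which yields the second statement of the theorem.

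Finally, extending to $X=T,V,\cO$ is immediate. For the `only if' direction, a unitary $T$- or $V$-intertwiner restricts to an $F$-intertwiner so the $F$ case applies; for $\cO$, Corollary \ref{coro:P-rep-intertwinner} again identifies $F$- and $\cO$-intertwiners. For the `if' direction, the P-module intertwiner $U$ canonically produces unitary intertwiners of the extended actions of $T,V$ and $\cO$ through the functors $\Pi^T,\Pi^V,\Pi^\cO$ of Subsection \ref{subsec:p-rep-functors}. I do not anticipate a serious obstacle here: the entire substantive content has been packaged into Theorem \ref{theo:min-invariant-subspace}, which is precisely what permits one to compress the infinite-dimensional intertwining datum $\Theta$ down to its finite-dimensional canonical core.
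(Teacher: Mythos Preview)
Your proof is correct and follows essentially the same approach as the paper: both directions hinge on Theorem \ref{theo:min-invariant-subspace} and Corollary \ref{coro:P-rep-intertwinner}, with functoriality handling the easy implication. Your ``only if'' argument is a minor variant---you obtain $\Theta(\fX)=\ti\fX$ by applying minimality symmetrically (to both $\ti\sigma$ and $\sigma$), whereas the paper argues via invariance of the P-dimension that $\dim(\Theta(\fX))=\dim(\ti\fX)$ and then invokes uniqueness of the smallest sub-module; both routes are equally valid and yours is arguably a touch cleaner.
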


\begin{proof}
	Consider two P-modules $(A, B, \fH)$ and $(\ti A, \ti B, \ti\fH)$ that are indecomposable, diffuse and finite dimensional. 
	By Theorem \ref{theo:pythag-irrep}, their associated representations $(\sigma,\scrH)$ and $(\ti\sigma,\ti\scrH)$ of $F$ are irreducible.
	Consider $\fX,\ti\fX$ as above which exist by Theorem \ref{theo:min-invariant-subspace}.
		
	Suppose that there exists a unitary operator $u:\fX\to\ti\fX$ which intertwinnes the P-modules structures. The functor $\Pi^F:\Mod(\cP)\to\Rep(F)$ maps $u$ to a unitary operator $\Theta : \sub{\fX} \rightarrow \sub{\ti\fX}$ which intertwinnes the two actions of $F$. Since $\sigma$ and $\ti\sigma$ are irreducible, $\fX$ and $\ti\fX$ are complete sub-modules. Subsequently, we have that $\sub{\fX}=\scrH$, $\sub{\ti\fX}=\ti\scrH$ and thus $\Theta$ is an intertwinner from $\scrH$ to $\ti\scrH$. This proves the easy direction of the theorem.
	
	Now we shall prove the more surprising direction and assume that there exists $\Theta : \scrH \rightarrow \ti \scrH$ a unitary operator intertwining the actions $\sigma, \ti \sigma$ of $F$.
	Up to equivalence and taking restrictions of the initial P-module,  we can assume that $\fH = \fX$, $\ti\fH = \ti\fX$. By Theorem \ref{theo:min-invariant-subspace} we have that $\fH, \ti\fH$ are in fact the unique smallest nontrivial sub-modules of the larger Hilbert spaces $\scrH,\ti\scrH$, respectively, and $\dim(\fH) = \dim_P(\sigma)$, $\dim(\ti \fH) = \dim_P(\ti\sigma)$. Further, since the Pythagorean dimension is an invariant for P-representations this implies that $\dim(\fH) =\dim(\ti\fH)$.
	Observe that the image $\Theta(\fH)$ is a sub-module of $\ti\scrH$ with $\dim(\Theta(\fH)) = \dim(\fH) = \dim(\ti\fH)$. 
	Hence, by uniqueness of the smallest nontrivial sub-module, we can conclude that $\ti \fH = \Theta(\fH)$. Therefore, the unitary intertwinner $\Theta$ restricts to a unitary operators from $\fH$ to $\ti \fH$ which intertwinnes the P-module structures.
	
	By Corollary \ref{coro:P-rep-intertwinner} the case $X=F$ implies all the other cases $X=F,T,V,\cO$.
\end{proof}

\subsubsection{Extension to reducible representations} \label{subsec:ext-reducible-rep}

Appealingly, the P-dimension is compatible with the usual direct sum of representations when considering diffuse P-representations permitting us to easily compute P-dimensions. 

\begin{proposition} \label{prop:p-dim-sum}
	If $\sigma_1, \sigma_2$ are two diffuse Pythagorean representations, then
	\[\dim_P(\sigma_1 \oplus \sigma_2) = \dim_P(\sigma_1) + \dim_P(\sigma_2).\]
	Here we take the convention that any addition involving $\infty$ is equal to $\infty$.
\end{proposition}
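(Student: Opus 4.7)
The inequality $\dim_P(\sigma_1 \oplus \sigma_2) \leq \dim_P(\sigma_1) + \dim_P(\sigma_2)$ is the easy direction: if both $\dim_P(\sigma_i)$ are finite, summing minimal P-modules realising each $\sigma_i$ gives a P-module of dimension $\dim_P(\sigma_1) + \dim_P(\sigma_2)$ inducing $\sigma_1 \oplus \sigma_2$, and if either is infinite the inequality is vacuous. Contrapositively, $\dim_P(\sigma_1 \oplus \sigma_2) = \infty$ already forces $\dim_P(\sigma_1) + \dim_P(\sigma_2) = \infty$, so both sides agree at $\infty$. It remains to prove $\dim_P(\sigma_1 \oplus \sigma_2) \geq \dim_P(\sigma_1) + \dim_P(\sigma_2)$ when $\dim_P(\sigma_1 \oplus \sigma_2) < \infty$; in this case, projecting a finite-dimensional minimal complete sub-module of $\scrH_1 \oplus \scrH_2$ onto each $\scrH_i$ (using the projections' commutation with $\tau_0, \tau_1$ from Proposition \ref{prop:pyth-vNA}, as detailed below) produces a finite-dimensional complete sub-module of $\scrH_i$, so each $\dim_P(\sigma_i)$ is finite as well.

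By Corollary \ref{coro:pythag-irrep} each $\sigma_i = \bigoplus_{j=1}^{n_i} \pi_{i,j}$ with $\pi_{i,j}$ irreducible diffuse. I reduce the proposition to the following claim: for any finite family of irreducible diffuse P-representations $\pi_k$,
\[
\dim_P\Bigl(\bigoplus_{k=1}^n \pi_k\Bigr) = \sum_{k=1}^n \dim_P(\pi_k),
\]
which, applied to each $\sigma_i$ and to $\sigma_1 \oplus \sigma_2$, yields the proposition. I prove the claim by induction on $n$, the case $n=1$ being trivial. For the inductive step let $\scrH := \bigoplus_{k=1}^n \scrH_k$, pick a complete sub-module $\fK \subset \scrH$ of minimal dimension $\dim_P(\bigoplus_k \pi_k)$ by Lemma \ref{lem:pyth-dim-tau-mod}, and let $p : \scrH \to \bigoplus_{k<n}\scrH_k$ and $q : \scrH \to \scrH_n$ be the orthogonal projections. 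Each $\scrH_k$ is an $F$-subrepresentation of the diffuse $\scrH$, hence by Proposition \ref{prop:pyth-vNA} stable under $\tau_0, \tau_1$ and their adjoints, so $p$ and $q$ commute with these operators. If $\fK \cap \scrH_n \neq \{0\}$, then it is a nontrivial sub-module of the irreducible $\scrH_n$, so Theorem \ref{theo:min-invariant-subspace} tells us it contains the smallest nontrivial sub-module $\fX_n$ of $\scrH_n$, yielding $\dim(\fK \cap \scrH_n) \geq \dim \fX_n = \dim_P(\pi_n)$. Moreover $p(\fK)$ is a complete sub-module of $\bigoplus_{k<n}\scrH_k$ (as $\sub{p(\fK)} = p(\sub{\fK}) = \bigoplus_{k<n}\scrH_k$), so by the inductive hypothesis $\dim p(\fK) \geq \sum_{k<n}\dim_P(\pi_k)$. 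The exact sequence $0 \to \fK \cap \scrH_n \to \fK \to p(\fK) \to 0$ then gives $\dim \fK \geq \sum_{k \leq n} \dim_P(\pi_k)$, as required.

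The main obstacle is ruling out the remaining case $\fK \cap \scrH_n = \{0\}$. In this case $p|_\fK$ is injective, so $\fK = \{v + \psi(v) : v \in V\}$ where $V := p(\fK)$ and $\psi : V \to \scrH_n$ is defined by $\psi(p(\xi)) := q(\xi)$ for $\xi \in \fK$; since $p, q$ commute with $\tau_0, \tau_1$, the map $\psi$ is a morphism of P-modules, so by the functoriality of $\Pi^F$ from Subsection \ref{subsec:p-rep-functors} it extends to an $F$-intertwiner $\Psi : \sub{V} = \bigoplus_{k<n}\scrH_k \to \sub{\psi(V)} \subseteq \scrH_n$; by Corollary \ref{coro:P-rep-intertwinner} the extension $\Psi$ also commutes with every $\tau_\nu^*$. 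The identity $\tau_\nu^*(v + \psi(v)) = \tau_\nu^* v + \Psi(\tau_\nu^* v)$ then shows that $\sub{\fK}$ is contained in the graph $G := \{w + \Psi(w) : w \in \bigoplus_{k<n}\scrH_k\}$, yet any nonzero vector of the form $(0, \ldots, 0, u_n)$ with $u_n \in \scrH_n \setminus \{0\}$ lies outside $G$, so $G$ is a proper subspace of $\scrH$ and this contradicts the completeness of $\fK$.
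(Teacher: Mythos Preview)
Your proof is correct and takes a genuinely different route from the paper. The paper reduces to the case where $\sigma_1,\sigma_2$ are both irreducible, takes a minimal complete sub-module $\scrX\subset\scrH_1\oplus\scrH_2$, observes that minimality forces $\scrX$ to be \emph{full}, and then invokes the structure theorem for full finite P-modules (Proposition~2.19 of \cite{BW23} together with Theorem~\ref{theo:pythag-irrep}) to split $\scrX=\scrX_1\oplus\scrX_2$ orthogonally into irreducible sub-modules; matching the induced irreducible subrepresentations with $\sigma_1,\sigma_2$ gives the bound. Your argument avoids this splitting entirely: you project the minimal $\fK$ onto the last factor and its complement, use rank--nullity via the short exact sequence $0\to\fK\cap\scrH_n\to\fK\to p(\fK)\to 0$, and then combine Theorem~\ref{theo:min-invariant-subspace} on the kernel with the inductive hypothesis on the image. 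The graph case you isolate (when $\fK\cap\scrH_n=0$) is precisely the situation the paper's fullness argument implicitly excludes, and your functorial extension of $\psi$ to an intertwiner $\Psi$ whose graph is closed and proper is a clean, self-contained way to rule it out without appealing to the decomposition of full P-modules. The trade-off: the paper's proof is shorter but leans on an external structural result, while yours is more elementary and stays within the tools developed in this section, at the cost of the extra case analysis. One small remark: your general claim about $\bigoplus_{k=1}^n\pi_k$ tacitly uses that each $\pi_k$ has finite P-dimension (needed to invoke Theorem~\ref{theo:min-invariant-subspace} and to have a finite-dimensional $\fK$); this holds in your application since you first established $\dim_P(\sigma_i)<\infty$ via the projection argument, but it would be worth stating explicitly.
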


\begin{proof}
	Consider two P-representations $(\sigma_1,\scrH_1)$ and $(\sigma_2,\scrH_2)$ of $F$.
	It is obvious that $\dim_P(\sigma_1 \oplus \sigma_2) \leq \dim_P(\sigma_1) + \dim_P(\sigma_2)$. Hence, we shall prove the reverse inequality. Since it is clear that $\dim_P(\sigma) \leq \dim_P(\pi)$ if $\sigma \subset \pi$ we can assume that both $\sigma_1, \sigma_2$ are finite dimensional Pythagorean representations. Further, by Corollary \ref{coro:pythag-irrep} it is sufficient to prove the lemma when $\sigma_1, \sigma_2$ are both irreducible.  
	
	By Lemma \ref{lem:pyth-dim-tau-mod}, let $\scrX \subset \scrH_1 \oplus \scrH_2$ be a complete sub-module such that $\dim(\scrX) = \dim_P(\sigma_1 \oplus \sigma_2)$. 
If $\scrX$ is not full, then there is a proper complete sub-module $\scrX_0\subset\scrX$. By completeness $\langle \scrX_0\rangle$ is the carrier Hilbert space of $\sigma_1\oplus\sigma_2$ and thus $\dim_P(\sigma_1\oplus\sigma_2)\leq \dim(\scrX_0)\leq \dim(\scrX)-1$, a contradiction. 
Therefore, $\scrX$ is full.
By Theorem \ref{theo:pythag-irrep} and the property of being full, $\scrX$ can be decomposed as $\scrX_1 \oplus \scrX_2$ where $\scrX_i$ is an irreducible sub-module which induces the subrepresentation $\sigma_i$ for $i = 1,2$.
	Then by invoking Lemma \ref{lem:pyth-dim-tau-mod} again we can conclude since
	\[\dim_P(\sigma_1) + \dim_P(\sigma_2) \leq \dim(\scrX_1) + \dim(\scrX_2) = \dim(\scrX) = \dim_P(\sigma_1 \oplus \sigma_2). \qedhere\]
\end{proof}

We now extend Theorem \ref{theo:min-invariant-subspace} to reducible Pythagorean representations.

\begin{corollary} \label{cor:min-invariant-subspace}
	Let $(A,B,\fH)$ be a diffuse P-module of finite Pythagorean dimension with associated $F$-representation $(\sigma,\scrH)$. Then there exists a unique smallest complete sub-module $\fK$ of $\scrH$ which is contained in $\fH$. More precisely, $\fK$ is the (unique) complete and full sub-module of $\fH$ or equivalently the smallest complete sub-module of $\fH$.
\end{corollary}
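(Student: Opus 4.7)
The plan is to reduce the statement to the irreducible case of Theorem \ref{theo:min-invariant-subspace} by first decomposing the finite-dimensional P-module $\fH$ itself into indecomposable sub-modules.

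First, I would split $\fH$ as an internal direct sum $\fH = \bigoplus_{j=1}^m \fH_j^{(0)} \oplus \fZ$, where each $\fH_j^{(0)}$ is an indecomposable sub-module and $\fZ$ is a residual subspace of $\fH$. Such a decomposition exists by an induction on $\dim(\fH)$: at each step one either peels off an indecomposable nontrivial sub-module summand of $\fH$, or the remainder is already residual. By Theorem \ref{theo:pythag-irrep}, each $\fH_j^{(0)}$ induces an \emph{irreducible} subrepresentation $\sigma_j^{(0)}$ on $\scrH_j^{(0)} := \sub{\fH_j^{(0)}}$, and Theorem \ref{theo:min-invariant-subspace} then produces the unique smallest nontrivial sub-module $\fK_j^{(0)} \subset \fH_j^{(0)}$ with $\dim(\fK_j^{(0)}) = \dim_P(\sigma_j^{(0)})$.

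I then set $\fK := \bigoplus_{j=1}^m \fK_j^{(0)} \subset \fH$. Since $\sub{\fK_j^{(0)}} = \scrH_j^{(0)}$ and $\bigoplus_j \fH_j^{(0)}$ is a complete sub-module of $\fH$ (because $\fZ$ is residual), one obtains $\sub{\fK} = \sum_j \scrH_j^{(0)} = \scrH$, so $\fK$ is complete in $\scrH$. Proposition \ref{prop:p-dim-sum} gives $\dim(\fK) = \sum_j \dim_P(\sigma_j^{(0)}) = \dim_P(\sigma)$, which is the absolute minimum possible for a complete sub-module by Lemma \ref{lem:pyth-dim-tau-mod}. Fullness of $\fK$ as a P-module then follows immediately: no proper complete sub-module of $\fK$ can have dimension strictly below $\dim_P(\sigma)$.

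Finally, for uniqueness and the smallest property, I would argue by dimension counting using the orthogonal projections $p_i$ onto a chosen orthogonal $F$-invariant decomposition of $\scrH$ into irreducibles; these commute with $\tau_0, \tau_1$ and their adjoints by Proposition \ref{prop:pyth-vNA}. Given any complete sub-module $\fK'$ of $\fH$, the image $p_i(\fK')$ is complete in the $i$-th irreducible component and thus contains the associated smallest sub-module by Theorem \ref{theo:min-invariant-subspace}; the injection $\fK' \hookrightarrow \bigoplus_i p_i(\fK')$ combined with the lower bound $\dim(p_i(\fK')) \geq \dim_P(\sigma_i)$ and $\dim(\fK') = \dim_P(\sigma)$ then forces $\fK' = \fK$. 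The main obstacle is precisely ensuring that the constructed $\fK$ does not depend on the (non-canonical, when $\sigma$ has equivalent irreducible summands) decomposition of $\fH$: the expected resolution is to identify $\fK$ with the canonical minimum-dimension complete sub-module of $\scrH$, and then conclude that every complete sub-module of $\fH$ of minimal dimension must coincide with it.
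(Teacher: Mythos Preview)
Your construction of $\fK$ is essentially the paper's (the paper decomposes $\fH$ directly into \emph{irreducible} sub-modules via \cite[Proposition 2.19]{BW23}, so that $\fK_j^{(0)}=\fH_j^{(0)}$, but this is a cosmetic difference). The real gap is in your minimality argument.

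The statement asks for the \emph{smallest} complete sub-module of $\scrH$: you must show $\fK\subset\scrX$ for \emph{every} complete sub-module $\scrX\subset\scrH$. Your argument does not do this. You only show that $\dim(\fK)=\dim_P(\sigma)$ is the minimal possible dimension, and then attempt uniqueness among complete sub-modules $\fK'\subset\fH$ with $\dim(\fK')=\dim_P(\sigma)$. Even if that uniqueness argument is completed, it says nothing about an arbitrary complete sub-module $\scrX$ of larger dimension, or one not contained in $\fH$. To bridge the gap you would still need to prove that every complete sub-module of $\scrH$ contains a complete sub-module of dimension exactly $\dim_P(\sigma)$, which is an additional nontrivial step you omit. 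There is also a secondary issue: your projections give $\fK_i\subset p_i(\fK')$, not $\fK_i\subset\fK'$; you only recover the latter under the extra hypothesis $\dim(\fK')=\dim_P(\sigma)$.

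The paper avoids all of this by arguing containment directly. With $\scrH=\oplus_i\scrH_i$ coming from the irreducible decomposition of $\fH$, it takes an arbitrary complete sub-module $\scrX\subset\scrH$ and produces, for each $i$, a nontrivial sub-module $\scrX_i\subset\scrH_i$ with $\scrX_i\subset\scrX$ (so intersections rather than projections). Theorem~\ref{theo:min-invariant-subspace} then gives $\fH_i\subset\scrX_i\subset\scrX$, hence $\fK\subset\scrX$ outright, with no dimension counting and no dependence on the choice of decomposition. Finally, the paper handles the case where $\fH$ is infinite dimensional (but of finite P-dimension) by first passing to a finite-dimensional complete sub-module via Lemma~\ref{lem:pyth-dim-tau-mod}; your induction on $\dim(\fH)$ does not cover this.
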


\begin{proof}
First suppose that $\fH$ is finite dimensional (as a vector space).
Let $(\sigma, \scrH)$ be the associated P-representation. 
From \cite{BW23} the P-module $\fH$ can be decomposed as 
\[\fH = \oplus_{i=1}^n \fH_{i}^{\oplus m_i} \oplus \fH_{\res} = \oplus_{i=1}^n \ti\fH_i \oplus \fH_{\res}\]
where $\{\fH_i\}_i$ are irreducible, pairwise inequivalent diffuse sub-modules, $\ti\fH_i = \fH_{i}^{\oplus m_i}$, and $\fH_{\res}$ is the residual subspace (note we have used the usual decomposition formula for $\fH$ and have grouped together equivalent sub-modules). 
By Theorem \ref{theo:pythag-irrep} this induces the decomposition
\[\sigma = \oplus_{i=1}^n \sigma_{i}^{\oplus m_i} = \oplus_{i=1}^n \ti\sigma_i\]
where the $\sigma_{i}$ are irreducible and pairwise inequivalent.

Let us show that $\fK := \oplus_{i=1}^n \ti\fH_i$ is the smallest complete sub-module of $\scrH$. Fix a complete sub-module $\scrX$. Up to removing a residual subspace we can assume that $\scrX$ is full. 
If $\scrX$ is irreducible, then $\langle \scrX\rangle$ is irreducible and thus $\sigma$ is irreducible implying that $n=1$ and $m_1=1$.
We are done by Theorem 	\ref{theo:min-invariant-subspace}.
Otherwise, $\scrX$ decomposes into a nontrivial direct sum of two which yields a decomposition of $\sigma$. Since this latter is a finite direct sum of irreducible representations the process of decomposing $\scrX$ must eventually terminate. 
Hence, $\scrX=\oplus_{\alpha} \scrX_\alpha$ with $\alpha$ in a finite index set and $\scrX_\alpha$ irreducible P-module.
By Theorem \ref{theo:pythag-irrep} we have that $\langle \scrX_\alpha\rangle$ must be irreducible.
By Schur Lemma we have that each $\langle \scrX_\alpha\rangle$ is unitarily equivalent to a certain $\sigma_i$.
Up to re-decomposing $\scrX$ we may now assume that
	\[\scrX = \oplus_{i=1}^n \scrX_{i}^{\oplus m_i} = \oplus_{i=1}^n \ti\scrX_i \]
	where necessarily $\langle \scrX_i\rangle \simeq \sigma_i.$
Theorem \ref{theo:min-invariant-subspace} implies that $\sub{\fH_i} = \sub{\scrX_i}$ for each $i$ and each copy of $\fH_i$ and $\scrX_i$. 
Therefore, $\fK$ is contained in $\scrX$ and thus is the smallest complete sub-module of $\scrH$. 
This proves the result for when $\fH$ is finite-dimensional.   

Now we remove the restriction that $\fH$ is finite dimensional as a vector space (but note that $\fH$ is still of finite P-dimension). 
By Lemma \ref{lem:pyth-dim-tau-mod} there exists a finite dimensional sub-module $\fX\subset\scrH$ which is complete.
Applying the proof of above to $\fX$ yields the existence of a unique smallest complete sub-module $\fK$ in $\scrH$. Since $\fH$ is itself a complete sub-module it must contain $\fK$.
\end{proof}

The next corollary extends Theorem \ref{theo:pythag-equiv} to the reducible case.

\begin{corollary} \label{cor:intertwinner-rep-to-mod}
	Every intertwinner between diffuse representations of finite P-dimension restricts to an intertwinner between their respective smallest complete sub-module.
	In particular, two diffuse representations of finite P-dimension are unitarily equivalent if and only if their respective smallest complete sub-modules are unitarily equivalent.
\end{corollary}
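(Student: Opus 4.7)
The plan is to reduce to the irreducible case handled by Theorem \ref{theo:pythag-equiv} by decomposing the representations into irreducibles and applying Schur's lemma blockwise.

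First, using Corollary \ref{cor:min-invariant-subspace} one decomposes each smallest complete sub-module as a finite orthogonal sum $\fK_i = \oplus_j \fK_{i,j}$ of irreducible diffuse P-submodules. The orthogonality argument from the proof of Lemma \ref{lem:complete-mod} shows that pairwise orthogonal sub-modules of $(\tau_0, \tau_1, \scrH_i)$ generate pairwise orthogonal subspaces, producing an orthogonal decomposition $\scrH_i = \oplus_j \sub{\fK_{i,j}}$ into irreducible $F$-subrepresentations by Theorem \ref{theo:pythag-irrep}. Any bounded $F$-intertwinner $\Theta : \scrH_1 \to \scrH_2$ therefore decomposes into a matrix of blocks $\Theta_{k,j} : \sub{\fK_{1,j}} \to \sub{\fK_{2,k}}$ between irreducible $F$-representations.

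Next, each block will be controlled by Schur's lemma: since $\Theta_{k,j}^*\Theta_{k,j}$ is a self-intertwinner of an irreducible representation, either $\Theta_{k,j} = 0$ or $\Theta_{k,j}$ is a scalar multiple of a unitary $F$-intertwinner. In the latter case Theorem \ref{theo:pythag-equiv} produces a unitary P-module intertwinner $U_{k,j} : \fK_{1,j} \to \fK_{2,k}$. The coordinate-wise construction of $\Pi^F$ in Subsection \ref{subsec:p-rep-functors} shows that the induced $F$-intertwinner $\Pi^F(U_{k,j})$ restricts to $U_{k,j}$ on $\fK_{1,j}$, and a second appeal to Schur's lemma on the one-dimensional intertwining space forces $\Theta_{k,j}$ to equal a scalar multiple of $\Pi^F(U_{k,j})$. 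Restricting this equality to $\fK_{1,j}$ then shows that $\Theta_{k,j}\restriction_{\fK_{1,j}}$ maps into $\fK_{2,k}$ and is itself a P-module intertwinner. Assembling the blocks proves that $\Theta\restriction_{\fK_1} : \fK_1 \to \fK_2$ is a P-module intertwinner.

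For the equivalence statement, if $\Theta$ is a unitary $F$-equivalence the above construction yields a P-module intertwinner $\fK_1 \to \fK_2$ that is automatically unitary because $\dim(\fK_1) = \dim_P(\sigma_1) = \dim_P(\sigma_2) = \dim(\fK_2)$ by Proposition \ref{prop:p-dim-sum}. Conversely, any unitary P-module intertwinner $V : \fK_1 \to \fK_2$ induces via $\Pi^F$ a unitary $F$-intertwinner $\Pi^F(V) : \sub{\fK_1} \to \sub{\fK_2}$; completeness of $\fK_i$ together with Lemma \ref{lem:complete-mod} then gives $\sub{\fK_i} = \scrH_i$, which furnishes the desired unitary equivalence. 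The main technical point to verify is the orthogonality of the generated subspaces $\sub{\fK_{i,j}}$ inside $\scrH_i$, which truly exploits the Pythagorean structure via the argument in Lemma \ref{lem:complete-mod}; once this is in place, the reduction to Schur's lemma and Theorem \ref{theo:pythag-equiv} completes everything.
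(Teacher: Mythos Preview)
Your proposal is correct and follows essentially the same route as the paper: decompose both representations into finitely many irreducible diffuse summands, apply Schur's lemma to the resulting matrix of blocks, and invoke Theorem~\ref{theo:pythag-equiv} on each nonzero block to see that it restricts to a P-module morphism between the corresponding irreducible sub-modules; reassembling gives the restriction $\Theta\restriction_{\fK_1}:\fK_1\to\fK_2$. The paper phrases the decomposition on the representation side via Corollary~\ref{coro:pythag-irrep} while you start from the P-module side via Corollary~\ref{cor:min-invariant-subspace}, but these are equivalent, and your extra care (the orthogonality of the $\sub{\fK_{i,j}}$ via Lemma~\ref{lem:complete-mod}, and the identification $\Theta_{k,j}=c\,\Pi^F(U_{k,j})$ via a second use of Schur) simply makes explicit what the paper leaves implicit. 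One small wording point: in the unitary case, ``automatically unitary because the dimensions agree'' should read ``an isometry (as the restriction of a unitary) between finite-dimensional spaces of equal dimension, hence unitary''.
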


\begin{proof}
	Let $(\sigma,\scrH)$ and $(\ti\sigma,\ti\scrH)$ be two such representations with intertwinner $\Theta : \scrH \rightarrow \ti\scrH$. By Corollary \ref{coro:pythag-irrep} we have a decomposition into irreducible diffuse subrepresentations:
	\[(\sigma,\scrH) = \oplus_{i=1}^m (\sigma_i,\scrH_i) \text{ and } (\ti\sigma,\ti\scrH) = \oplus_{j=1}^n (\ti\sigma_j,\ti\scrH_j).\]
	Denote by $\fH_i\subset\scrH_i$ and $\ti\fH_j\subset \ti\scrH_j$ the unique smallest complete sub-modules for all $i,j$.
	By Schur's lemma, for each $i,j$ the morphism $\Theta$ restricts into a morphism $\Theta_{j,i}:\sigma_i\to \ti\sigma_j$ which is either zero or an isomorphism.
	Given $i,j$ and assuming that $\Theta_{j,i}\neq 0$, this morphism restricts into an isomorphism between the smallest complete sub-modules $\fH_i$ and $\ti\fH_j$. The result then follows from Corollary \ref{cor:min-invariant-subspace}.
\end{proof}

\begin{remark}\label{rem:THC}
We have proven that if a P-module is indecomposable and is additionally {\it diffuse and finite P-dimensional}, then its associated representations of $F,T,V,\cO$ are all irreducible.
This is no longer true in full generality if we do not restrict to {\it diffuse and finite P-dimensional} P-modules.
Indeed, indecomposable {\it atomic} P-modules of finite dimension yields the following reducible representation of $F$: $1_F\oplus\lambda_{F/F_{1/2}}$. 
Although, this is rather an exception for the finite dimensional atomic case which only appears for the one P-dimensional representations of $F$ (the extensions to $T,V,\cO$ are irreducible and for higher dimension this never happens even for $F$).
In the infinite P-dimensional case we have that an indecomposable atomic P-module yields the quasi-regular representation 
$\lambda_{ F/\widehat F_{1/2} } :F\act \ell^2( F/\widehat F_{1/2} )$ 
where  $\widehat F_{1/2}=\{g\in F:\ g(1/2)=1/2, g'(1/2)=1\}$ is a subgroup of $F$ that is {\it not} self-commensurated and thus $\lambda_{F/\widehat F_{1/2}}$ is {\it reducible} (and so are the extensions to $T,V$ and even the extension to $\cO$).
Moreover, the example outlined in Remark \ref{rem:THB} provides a reducible representation of $F$ (worst it is a direct integral over a non-atomic measure of representations just like the previous example $\lambda_{ F/\widehat F_{1/2} }$) built from an indecomposable infinite dimensional diffuse P-module which stays reducible when extended to $T,V$ and $\cO$.\\
Second, we have proven that we may classify diffuse irreducible finite P-dimensional representations of $F,T,V,\cO$ by checking conjugacy of their underlying smallest nontrivial P-modules.
We will explain in Subsection \ref{subsec:atomic-case} that this extends to atomic finite P-dimensional representations of $F,T,V,\cO$ (after discarding the case of $F$ in P-dimension one). 
The classification is even less tractable in the infinite P-dimensional case since indecomposable P-modules may yield reducible representations both in the atomic and diffuse cases.
\end{remark}

\subsection{The atomic case} \label{subsec:atomic-case}
We now extend certain main results to the atomic case.
We start by giving the generic form of all finite dimensional irreducible atomic P-modules and their associated P-representations.

\noindent
{\it P-module.}
Fix $d\geq 1$ and define $W_d$ to be a set of representatives of prime binary strings of length $d$, i.e.~finite word $w$ in $0,1$ that cannot be written as $v\cdot v\cdots v$ with $v$ a smaller word.
Define the following partial shift operators:
$$\begin{cases}
A_w(e_k) = e_{k+1} \text{ and }B_we_k = 0 & \text{ if the $k$th digit of $w$ is equal to $0$}\\
A_w(e_k) = 0 \text{ and }B_we_k = e_{k+1} & \text{ otherwise},
\end{cases}$$
where $(e_1,\cdots,e_d)$ is the usual basis of $\C^d$ and for convenience of notation, we write $e_{d+1}$ for $e_1$. 
Given $z\in \bS_1$ in the circle we define
$$m_{w,z}:=(A_w D_z, B_w D_z,\C^d)$$
where $D_z$ is the diagonal matrix with coefficients equal to $1$ except the last one equal to $z$.
By \cite{BW23}, we have that {\it every} irreducible atomic P-module is unitarily equivalent to a P-module of the form $m_{w,z}$.

\noindent 
{\it P-representations of $F,T,V$.}
Consider now $Y=F,T,V$, $d\geq 1$, and $(w,z)\in W_d\times \bS_1$.
The P-module $m_{w,z}$ yields the following P-representation of $Y$.
Let $p:=w^\infty\in\cC:=\{0,1\}^{\N^*}$ be the infinite binary string obtained by concatenating $w$ with itself indefinitely.
Define the parabolic subgroup
$$Y_p:=\{g\in Y:\ g(p)=p\}$$
for the usual action $Y\act \cC$.
Define now the one dimensional representation: 
$$\chi_{z}^p: Y_w\to \C,\ g\mapsto z^{\log(2^d)(g'(p))}$$
where $\log(2^d)$ is the logarithm in base $2^d$.
Finally, consider the induced representation $\Ind_{Y_p}^Y \chi_z^p$ of the Thompson group $Y$ which is by definition {\it monomial}.
From \cite{BW23} we deduce that if $(Y,d)\neq (F,1)$ then 
$$\Pi^Y(m_{w,z})\simeq \Ind_{Y_p}^Y \chi_z^p.$$
Moreover, the representation $\Ind_{Y_p}^Y \chi_z^p$ is irreducible (since $Y_p\subset Y$ is a self-commensurated subgroup) and by inspection we have that $m_{w,z}$ is irreducible as well.
In particular, $$\dim_P(m_{w,z})=|w|=d.$$
When $(Y,|w|)=(F,1)$ then $w$ is either $0$ or $1$ and thus $p:=w^\infty$ is one of the endpoint of $\cC$ which is already fixed by $F$.
Hence, $m_{w,z}$ is an irreducible P-module of dimension one.
However, the associated representation of $F$ is reducible and unitarily equivalent to $\chi_z^p\oplus \Ind_{F_{r}}^F \chi_z^r$ or $\chi_z^q \oplus \Ind_{F_{s}}^F \chi_z^s$, respectively, where $p=0^\infty, q=1^\infty, r=1\cdot 0^\infty, s=0\cdot 1^\infty$.
Note that even in the $(F,1)$-case two pairwise non-isomorphic P-modules yield two pairwise non-isomorphic representations except when $z=1$: $\Pi^F(m_{0,1})\simeq \Pi^F(m_{1,1})$ even though $m_{0,1}\not\simeq m_{1,1}$.

Using the Mackey--Schoda criterion we easily deduce that $\Pi^Y(m_{w,z})\simeq \Pi^Y(m_{v,y})$ if and only if $(w,z)=(v,y)$ for $Y=F,T,V$ (and $(Y,d)\neq (F,1)$) \cite{Mack51}, see also \cite{Burger-Harpe97} or the recent monograph \cite{Bekka-Harpe20}.
Now, we can observe that at the P-module level we have the same equivalence classes: $m_{w,z}\simeq m_{v,y}$ if and only if $(w,z)=(v,y)$.

{\it P-representations of $\cO$.}
Consider $(w,z)$ and $(v,y)$ as above and the representations $\Pi^\cO(m_{w,z}), \Pi^\cO(m_{v,y}).$
Note that these representations are irreducible since their restriction to $V$ are.
If $(w,z)=(v,y)$, then $m_{w,z}\simeq m_{v,y}$ implying that $\Pi^\cO(m_{w,z})\simeq \Pi^\cO(m_{v,y}).$
Otherwise, $\Pi^\cO(m_{w,z}), \Pi^\cO(m_{v,y})$ cannot be unitary equivalent because their restriction to $V$ are not.
We deduce that the same classification holds for $\cO$.

In summary, consider two irreducible atomic P-modules $m,\ti m$, $X=F,T,V,\cO$, and $\dim_P(m)=d$ is finite.
If $(X,d)\neq (F,1)$, then $m\simeq \ti m$ if and only if $\Pi^X(m)\simeq \Pi^X(\ti m)$ for $X=F,T,V,\cO$.
Moreover, $\Pi^X(m)$ is irreducible.

\noindent
{\it P-dimension of atomic P-representations.}
If $m_{w,z}$ is an atomic P-module as above with $(w,z)\in W_d\times \bS_1$, then $\dim_P(m_{w,d})=|w|=d$. 
Now, a finite dimensional atomic representation of $X$ is of the form $\Pi^X(\oplus_{i=1}^k m_{w_i,z_i})$ with $k$ finite, $w_i \in W_{d_i}$, and $z_i\in \bS_1$.
In particular, 
$$\dim_P(\Pi^X(\oplus_{i=1}^k m_{w_i,z_i}))=\dim_P(\oplus_{i=1}^k m_{w_i,z_i}) = \sum_{i=1}^k d_i$$ and thus is independent of $X=F,T,V,\cO$ and is additive.

Finally, let us show that the P-dimension is additive for \textit{all} P-representations which follows by a similar argument to Proposition \ref{prop:p-dim-sum}. Let $\sigma_{\atom}$ be an atomic P-representation and $\sigma_{\diff}$ a diffuse P-representation. We can assume that both representations have finite P-dimension and are irreducible. Let $\scrX$ be a complete full sub-module satisfying $\dim(\scrX) = \dim_P(\sigma_{\atom} \oplus \sigma_{\diff})$. By the general decomposition theorem for P-modules, we have $\scrX = \scrX_{\atom} \oplus \scrX_{\diff}$ where $\scrX_{\atom}$ (resp.~$\scrX_{\diff}$) is a full atomic (resp.~diffuse) P-module which induces $\sigma_{\atom}$ (resp. $\sigma_{\diff}$). From there, we can conclude from the previous results that $\dim_P(\sigma_{\atom} \oplus \sigma_{\diff}) = \dim_P(\sigma_{\atom}) + \dim_P(\sigma_{\diff})$.

Therefore, from the classification results of \cite{BW23} and the above elementary observations we can extend the main results of Section \ref{sec:classification}, thus completing the proof of Theorem \ref{thm:smallest-module} for the atomic case.

\subsection{Equivalence of categories for finite P-dimensions}

We now interpret the results developed so far from this section using the language of categories and complete the proof for Theorem \ref{thm:rigidity-category}. 
Recall from the introduction that $\Mod_{\full}^{\FD}(\cP)$ denotes the full subcategory of P-modules whose objects are {\it finite dimensional full} P-modules. 
Finally, write $\Mod^{\FD+}_{\full}(\cP)$ for the full sub-category of $\Mod_{\full}^{\FD}(\cP)$ obtained by removing P-modules that contains atomic P-modules of dimension one.
The full subcategories $\Rep^{\FD}(X), \Rep^{\FD+}(F)$ for $X=F,T,V,\cO$ are similarly defined.

\begin{theorem}\label{theo:equiv-category}
	The following assertions are true.
	\begin{enumerate}
		\item The functor $\Pi^Z:\Mod_{\full}^{\FD}(\cP)\to \Rep^{\FD}(Z)$ 
		is an \emph{equivalence of categories} for $Z=T,V,\cO$.
		\item The functor $\Pi^F:\Mod_{\full}^{\FD+}(\cP)\to \Rep^{\FD+}(F)$ is an \emph{equivalence of categories}.
		\item The categories $\Rep^{\FD}(Z)$ with $Z=T,V,\cO$ and $\Rep^{\FD+}(F)$ are all \emph{semisimple}.
	\end{enumerate}
\end{theorem}

\begin{proof}
Fix $Z=T,V,\cO$ and consider a full P-module $m$ of finite dimension.
There exists $k\geq 1$ and some irreducible P-modules $m_1,\dots,m_k$ such that $m= \oplus_{i=1}^k m_i$.
Now, from the definition of the Pythagorean functor we get that $\Pi^Z(m)= \oplus_{i=1}^k \Pi^Z(m_i)$.
If $m_i$ is diffuse, then by Corollary \ref{cor:min-invariant-subspace} we have that $\Pi^Z(m_i)$ is irreducible.
If $m_i$ is atomic, then by the classification of Section \ref{subsec:atomic-case} of all irreducible atomic P-modules of finite dimension and their associated representation  we deduce that $\Pi^Z(m_i)$ is irreducible.
Hence, in any case $\Pi^Z(m_i)$ is irreducible.
Moreover, again by applying Corollary \ref{cor:min-invariant-subspace} in the diffuse case and the classification of atomic P-modules and representations of Section \ref{subsec:atomic-case} we deduce that $\Pi^Z(m_i)\simeq \Pi^Z(m_j)$ if and only if $m_i\simeq m_j$.
From there (using Schur lemma) we easily deduce that $\Pi^Z:\Mod_{\full}^{\FD}(\cP)\to \Rep^{\FD}(Z)$ realises an equivalence of categories. This proves the first statement.
A similar argument gives the second statement.
We have recalled in the preliminaries that a full finite dimensional P-module decomposes into a finite direct sum of irreducible ones.
This means that $\Mod_{\full}^{\FD}(\cP)$ is semisimple.
Hence, the third statement follows from the first two since semisimplicity is preserved by equivalences of categories.
\end{proof}

\begin{remark}
Note that the existence of nontrivial residual subspaces in the decomposition of non-full P-modules prevents $\Mod^{\FD}(\cP)$ to be semisimple. In fact, this later category is not even {\it abelian} \cite[Chapter VII.3]{MacLane71}.
	Indeed, if $\fH:=\fH_0\oplus\fZ$ is a P-module with $\fZ$ a nontrivial residual subspace and $\fH_0$ a sub-module, we have that the obvious embedding $j:\fH_0\into\fH$ is {\it monic} and moreover it can be proved that its cokernel exists and is the zero map. This implies that $j$ is not a kernel; if it were it would be the kernel of a zero map implying that $\fH$ and $\fH_0$ are isomorphic which is absurd.
This can easily be checked on the example $\fH:=\C e_1\oplus\C e_2$ so that $A(e_1)=e_1=B(e_2), A(e_2)=0=B(e_1)$ and $\fH_0:=\C e_1, \fZ:=\C e_2$.
\end{remark}

\section{Classification of diffuse and atomic P-modules per dimension}
\label{sec:mani-pyth-rep}
In this section we unravel a beautiful geometric structure for irreducible classes of P-representations hence providing {\it moduli spaces}.
We will freely use elementary facts on differential manifolds and refer the reader, for details and proofs, to the book of Lee \cite{Lee12}.
All manifolds considered here are {\it real} and {\it smooth}, and all submanifolds are {\it embedded} (or {\it regular}): a subset $S$ of a manifold $M$ that is itself a manifold whose topology is the subspace topology and whose smooth structure yields a smooth embedding inside $M$. Note that an open subset of a manifold is a submanifold (and all submanifold $S\subset M$ with same dimension than $M$ arise that way, see \cite[Proposition 5.1]{Lee12}).
We may drop the terms ``smooth'' and ``embedded''.

\subsection{Manifolds of P-modules}
In \cite{BW22} (Section 4) it was shown that the set of diffuse one dimensional P-representations forms a manifold $\bS_3\backslash(\cC_1 \cup \cC_2)$: the real 3-sphere $$\bS_3=\{(a,b)\in \C^2:\ |a|^2+|b|^2=1\}$$
to which we remove the two circles $\cC_1, \cC_2$ given by $a=0$ or $b=0$, respectively.
Note that the two circles correspond to the atomic reducible representations of $F$ (which becomes irreducible when extended to $T,V,\cO$).
By Theorems \ref{theo:pythag-irrep} and \ref{theo:pythag-equiv}, every diffuse one dimensional P-representation is irreducible and moreover two of them are equivalent if and only if they correspond to the same point of the 3-sphere. 
Hence, $\bS_3\backslash(\cC_1 \cup \cC_2)$ is in bijection with all irreducible classes of diffuse representations of $F$ (resp.~$T,V,\cO$) of P-dimension one.
Remarkably, we shall show that this result can be extended for any finite P-dimension. This will not only provide a beautiful classification result but will also demonstrate how the number of equivalence classes of irreducible diffuse representations grows rapidly with P-dimension.

Rather than working with an abstract Hilbert space $\fH$, we now consider $\C^d$ for a certain dimension $d\geq 1$ equipped with the coordinate basis and the standard inner product. 
Recall that by construction of $\cP$ we have:

\begin{observation} \label{obs:pyth-pair-matrix}
Given $d\geq 1$, the set of P-modules $(A,B,\C^d)$ is in bijection with the set of isometric matrices in $M_{2d, d}(\C)$ (formed by stacking $A$ on top of $B$).
\end{observation}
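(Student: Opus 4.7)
The plan is to verify the claim by a direct computation: given a pair $(A,B)\in M_{d,d}(\C)^2$, form the stacked matrix
\[M := \begin{pmatrix} A \\ B \end{pmatrix} \in M_{2d,d}(\C),\]
and compute its Gram matrix $M^*M$. Writing $M^* = (A^* \ B^*)$ with the obvious block decomposition, one finds $M^*M = A^*A + B^*B$, so the Pythagorean equality $A^*A + B^*B = I_d$ is exactly the condition that $M$ is an isometry (i.e.\ $M^*M = I_d$, equivalently $M$ preserves norms from $\C^d$ to $\C^{2d}$).

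The map $(A,B) \mapsto M$ is evidently a bijection from pairs of $d\times d$ matrices to $M_{2d,d}(\C)$ since one recovers $A$ and $B$ as the upper and lower $d \times d$ blocks of $M$, respectively. Restricting to pairs satisfying the Pythagorean equality produces, by the computation above, exactly the set of isometries $\C^d \hookrightarrow \C^{2d}$, so the restriction is still a bijection.

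There is essentially no obstacle here; the observation is a tautology once one writes the block matrix. The only thing worth flagging is the convention: the fact that stacking $A$ on top of $B$ (rather than, say, placing them side by side) is what produces the correct Gram matrix $A^*A + B^*B$ rather than $AA^* + BB^*$. This is consistent with the earlier definition of a P-module and with the interpretation in Subsection \ref{sec:P-module} of an element of the P-module as an isometry $\fH \to \fH \oplus \fH$, $\xi \mapsto (A\xi, B\xi)$, whose matrix representation is precisely the stacked matrix $M$.
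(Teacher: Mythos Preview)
Your proof is correct and is exactly the computation the paper has in mind; the paper itself offers no proof for this observation, simply prefacing it with ``by construction of $\cP$'' and treating it as immediate from the definition of a P-module.
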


We shall frequently identify P-modules with their associated rectangular matrices. 
We now define various classes of P-modules.

\begin{definition} \label{def:set-pyth-pair}
For any $d\geq 1$ we consider the following lattice of sets:
$$\Irr_{\diff}\sqcup \Irr_{\atom}=\Irr(d) \subset P_d\subset P_{\leq d}$$
where $P_{\leq d}$ is the set of P-module of the form $(A,B,\C^d)$ (and thus of P-dimension smaller or equal to $d$), $P_d$ the subset of full ones (or equivalently of P-dimension exactly $d$), $\Irr(d)$ the subset of irreducible ones, and finally $\Irr_{\diff}(d)$ and $\Irr_{\atom}(d)$ the irreducible diffuse and atomic ones, respectively.
\end{definition}

\begin{lemma}
	The set $P_{\leq d}$ forms a $3d^2$-dimensional submanifold of $M_{2d,d}(\C)$.
\end{lemma}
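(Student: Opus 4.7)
The plan is to realize $P_{\leq d}$ as the regular preimage of a point under a smooth map between finite dimensional real vector spaces, then invoke the preimage (regular value) theorem. By Observation \ref{obs:pyth-pair-matrix}, $P_{\leq d}$ corresponds, via the stacking identification $(A,B)\leftrightarrow R$, to the set of isometric matrices $R\in M_{2d,d}(\C)$, that is, those satisfying $R^*R = I_d$. Letting $H_d$ denote the real vector space of $d\times d$ Hermitian matrices, I would therefore consider the smooth map
\[f: M_{2d,d}(\C)\to H_d,\qquad R\mapsto R^*R,\]
so that $P_{\leq d}=f^{-1}(I_d)$.

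Next I would verify that $I_d$ is a regular value of $f$, i.e.\ that the differential is surjective at every point of $P_{\leq d}$. A direct computation gives
\[df_R(X)=R^*X+X^*R\]
for $R\in P_{\leq d}$ and $X\in M_{2d,d}(\C)$. To produce a preimage of an arbitrary $H\in H_d$, the natural test vector is $X:=\tfrac{1}{2}RH$, for which $R^*X=\tfrac{1}{2}(R^*R)H=\tfrac{1}{2}H$ and $X^*R=\tfrac{1}{2}H^*(R^*R)=\tfrac{1}{2}H$ (using $H^*=H$), so $df_R(X)=H$. Hence $df_R$ is surjective and $I_d$ is a regular value.

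Finally, the preimage theorem will then yield that $P_{\leq d}$ is a smooth embedded submanifold of $M_{2d,d}(\C)$ of real codimension $\dim_\R H_d=d^2$. Combined with $\dim_\R M_{2d,d}(\C)=4d^2$, this gives $\dim_\R P_{\leq d}=4d^2-d^2=3d^2$, as required. I do not anticipate any serious obstacle: the only nontrivial step is the surjectivity of $df_R$, and it collapses to the one-line calculation above once one picks the right test vector.
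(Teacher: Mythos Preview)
Your proof is correct and follows essentially the same route as the paper: both introduce the map $f:M_{2d,d}(\C)\to H_d$, $R\mapsto R^*R$, realise $P_{\leq d}$ as $f^{-1}(I_d)$, and compute the dimension as $4d^2-d^2=3d^2$. If anything, your version is slightly more careful: the paper simply asserts that $f$ is a submersion (which fails at $R=0$), whereas you correctly verify the weaker sufficient fact that $I_d$ is a regular value, via the explicit test vector $X=\tfrac12 RH$.
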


\begin{proof}
Define the map $f:M_{2d,d}(\C)\to H_d,\ R\mapsto R^*R$ where $H_d$ is the set of hermitian matrices of size $d$.
We consider $M_{2d,d}(\C)$ and $H_d$ as {\it real} vector spaces.
Observe that $f$ is smooth with derivative at $R\in M_{2d,d}(\C)$ being the real linear map:
$$df_R:M_{2d,d}(\C)\to H_d,\ X\mapsto R^*X+ X^* R.$$
Note that $f^{-1}(\{\id\})=P_{\leq d}$. Hence, to prove that $P_{\leq d}$ is a manifold of dimension $3d^2$ it is sufficient to prove that $df_R$ is surjective for all $R\in f^{-1}(\{\id\})$ by \cite[Corollary 5.14]{Lee12}.
If $K=K^*$ and $R^*R=\id$, then $df_R(RK/2)=K$ proving the surjectivity.
\end{proof}

\subsection{Manifolds of diffuse representations}
\subsubsection{Action of $\PSU(d)$ on P-modules.}
Let $\U(d)$ be the unitary group of size $d$ and consider the action by conjugation:
\[\alpha : \U(d) \act P_{\leq d},\ u \cdot (A,B,\C^d) := (uAu^*,uBu^*,\C^d).\]
This action factorises into an action of $\PSU(d):=\U(d)/\bS_1$ (the projective special unitary group) that we continue to denote by $\alpha$.
It is rather obvious that conjugating a P-module by a unitary preserves various properties such as being: atomic, diffuse, full, irreducible, and in particular preserves the P-dimension. We deduce the following fact.

\begin{lemma} \label{lem:alpha-act-restrict}
The subsets $P_d, \Irr(d), \Irr_{\diff}(d)$ and $\Irr_{\atom}(d)$ are all stabilised by $\alpha$ (and we write $\alpha$ for the corresponding restrictions).
\end{lemma}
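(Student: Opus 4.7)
The plan is to observe that the action $\alpha$ of $\U(d)$ on $P_{\leq d}$ is nothing but the natural action by \emph{unitary intertwinners} of P-modules. Explicitly, for $u\in\U(d)$ and $m=(A,B,\C^d)$, the unitary $u:\C^d\to\C^d$ satisfies
\[
u\circ A=(uAu^{*})\circ u \quad\text{and}\quad u\circ B=(uBu^{*})\circ u,
\]
so $u$ is a unitary isomorphism in $\Mod(\cP)$ from $m$ to $\alpha(u)(m)$. Hence any property of P-modules which is an invariant of the unitary equivalence class is automatically $\alpha$-invariant, and the lemma reduces to verifying that being full, irreducible, diffuse, and atomic are each such invariants.

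For fullness and irreducibility this is immediate: if $v:m\to m'$ is any unitary intertwinner of P-modules, then $v$ bijects sub-modules of $m$ with sub-modules of $m'$ (a subspace $\fK$ is closed under $A,B$ if and only if $v(\fK)$ is closed under $A',B'$) and likewise bijects residual subspaces with residual subspaces. Thus the existence or nonexistence of a nontrivial sub-module (resp.~a nontrivial residual subspace) is preserved, which shows that $\Irr(d)$ and $P_d$ are $\alpha$-stable.

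For diffuseness and atomicity the key identity is
\[
u(w\cdot\xi)=w'\cdot u(\xi),
\]
for every finite word $w$ in $a,b$, where $w'$ is the same word evaluated in the conjugated generators $uAu^{*},uBu^{*}$; taking norms yields $\|w\cdot\xi\|=\|w'\cdot u(\xi)\|$. Hence the defining vanishing condition $\lim_n \|p_n\cdot \xi\|=0$ on infinite words transfers through $u$, so diffuseness is preserved. Since atomicity is defined as the absence of a nontrivial diffuse sub-module, and both ``sub-module'' and ``diffuse'' have just been shown to be preserved, atomicity is preserved as well. Combined with the invariance of irreducibility this stabilises $\Irr_{\diff}(d)$ and $\Irr_{\atom}(d)$. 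No genuine obstacle is expected: the whole argument amounts to unwinding the definitions once one recognises $\alpha$ as the unitary-equivalence action.
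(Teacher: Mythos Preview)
Your proof is correct and follows exactly the approach the paper takes: the paper simply asserts (in the sentence preceding the lemma) that conjugating a P-module by a unitary obviously preserves being atomic, diffuse, full, and irreducible, and states the lemma without further proof. You have made this explicit by recognising $\alpha$ as the unitary-equivalence action and verifying that each property is a unitary-equivalence invariant, which is precisely the content the paper leaves implicit.
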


\subsubsection{Diffuse submanifolds of $P_{\leq d}$.}

\begin{lemma}
	The set $\Irr(d)$ is an open subset of $P_{\leq d}$ and in particular is a submanifold of same dimension $3d^2$.
\end{lemma}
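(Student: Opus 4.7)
The plan is to show that the complement of $\Irr(d)$ in $P_{\leq d}$ — the set of reducible P-modules — is closed, which then forces $\Irr(d)$ to be open. Since an open subset of a manifold inherits a canonical submanifold structure of the same dimension (\cite[Proposition 5.1]{Lee12}), the dimension claim is automatic once openness is established.

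First, I would reformulate reducibility in terms of projections. A P-module $(A,B,\C^d)$ is reducible precisely when there exists an integer $k$ with $1 \leq k \leq d-1$ and an orthogonal projection $p \in M_d(\C)$ of rank $k$ such that $Ap = pAp$ and $Bp = pBp$; these conditions are equivalent to the range of $p$ being a nontrivial proper sub-module.

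Next, for each fixed $k \in \{1,\dots,d-1\}$, consider
\[ R_k := \{ (A,B,p) \in P_{\leq d} \times G_k : Ap = pAp,\ Bp = pBp \}, \]
where $G_k \subset M_d(\C)$ denotes the Grassmannian of rank-$k$ orthogonal projections, a compact submanifold. The defining equations of $R_k$ are polynomial (continuous) in the entries of $A,B,p$, so $R_k$ is closed. Because $G_k$ is compact, the projection $\pi : P_{\leq d} \times G_k \to P_{\leq d}$ is a closed map, and hence $\pi(R_k)$ is closed in $P_{\leq d}$. The set of reducible P-modules in $P_{\leq d}$ is the finite union $\bigcup_{k=1}^{d-1} \pi(R_k)$, which is therefore closed. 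Consequently $\Irr(d)$ is open in $P_{\leq d}$ and, being open, is a submanifold of the same real dimension $3d^2$.

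The only subtle point is the use of compactness of $G_k$ to turn a pointwise existence statement ("there is some invariant subspace") into a closed condition on $(A,B)$ alone; without this, the existential quantifier over subspaces might not preserve closedness. All other ingredients (polynomial defining equations, the manifold structure of $G_k$, openness preserved by finite unions on the complement) are routine.
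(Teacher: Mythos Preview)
Your proof is correct and follows essentially the same strategy as the paper: show that the set of reducible P-modules is closed by expressing it as a finite union over ranks $k$, and for each $k$ use compactness of the parameter space of $k$-dimensional subspaces to pass from a closed incidence condition to a closed condition on $(A,B)$ alone. The only cosmetic difference is that the paper fixes one reference subspace $V_k=\mathrm{span}(e_1,\dots,e_k)$ and then sweeps it via the compact group $\PSU(d)$, whereas you parameterise all $k$-dimensional subspaces directly by the compact Grassmannian $G_k$ and use that projection along a compact factor is a closed map; these are equivalent devices.
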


\begin{proof}
Fix a natural number $1\leq k\leq d-1$ and define $q_k\in M_{d,d}(\C)$ to be the orthogonal projection onto $V_k\subset \C^d$: the vector subspace spanned by the first $k$ vectors of the standard basis $(e_1,\cdots,e_d)$ of $\C^d$.
Observe that if $(A,B,\C^d)$ is a P-module, then $V_k$ is a sub-module of it if and only if we have
$$Aq_k=q_kAq_k \text{ and } Bq_k=q_kBq_k.$$
Subsequently, define the map
\[\phi_k : P_{\leq d} \rightarrow M_{d,d}(\C) \times M_{d,d}(\C),\ (A,B,\C^d) \mapsto (q_k^\perp A q_k, q_k^\perp Bq_k)\]
where $q_k^\perp:=I_d-p_k$ and set $P_{\leq d}^k$ to be the pre-image of $0$. 
By construction, $P_{\leq d}^k$ is the set of P-modules admitting $V_k$ as a sub-module.
Since $\phi_k$ is a continuous function and $P_{\leq d}$ is compact (inside $M_{2d,d}(\C)$) we deduce that $P_{\leq d}^k$ is compact. 

Let $\PSU(d)\cdot P_{\leq d}^k$ be the range of the map
$$\PSU(d)\times P_{\leq d}^k,\ (u,(A,B,\C^d))\mapsto (uAu^*,uBu^*,\C^d)$$
which is compact since $\PSU(d)$ and $P_{\leq d}^k$ are.
Note that $\PSU(d)\cdot P_{\leq d}^k$ is the set of all P-modules $(A,B,\C^d)$ admitting a sub-module of dimension $k$.
We deduce that $\Irr(d)$ is the complement of a finite union of compact subspaces $\cup_{k=1}^{d-1}\PSU(d)\cdot P_{\leq d}^k$ and thus must be open.
\end{proof}

\begin{lemma}\label{lem:atomic}
We have the partition 
$$\Irr(d)=\Irr_{\diff}(d)\sqcup \Irr_{\atom}(d)$$
where $\Irr_{\atom}(d)$ is a compact submanifold of dimension $d^2$ and $\Irr_{\diff}(d)$ is an open submanifold of dimension $3d^2$.
\end{lemma}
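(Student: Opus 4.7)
The plan is to verify the set-theoretic partition, then establish compactness of $\Irr_{\atom}(d)$, from which the submanifold claim for $\Irr_{\diff}(d)$ follows essentially for free by taking the complement inside the open set $\Irr(d)$.

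First I would dispose of the partition. The diffuse and atomic subsets are disjoint by definition: a nontrivial diffuse P-module cannot be atomic. Conversely, if $(A,B,\C^d)\in\Irr(d)$ is not atomic, then by definition it contains a nontrivial diffuse sub-module, and irreducibility forces this sub-module to be all of $\C^d$; hence the P-module is diffuse. This gives $\Irr(d)=\Irr_{\diff}(d)\sqcup \Irr_{\atom}(d)$.

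Next I would establish compactness of $\Irr_{\atom}(d)$ using the explicit description of atomic irreducible P-modules recalled in Subsection \ref{subsec:atomic-case}. Let $\widetilde W_d$ denote the (finite) set of prime binary strings of length $d$. According to the classification of \cite{BW23}, every element of $\Irr_{\atom}(d)$ is unitarily conjugate to some $Z\cdot m_w=(A_wZ,B_wZ,\C^d)$ with $w\in\widetilde W_d$ and $Z\in D_d$. Thus
$$\Irr_{\atom}(d)=\bigcup_{w\in\widetilde W_d}\bigl\{\,u(A_wZ)u^*,u(B_wZ)u^*,\C^d)\ :\ u\in\PSU(d),\ Z\in D_d\,\bigr\}.$$
Each piece is the image of the compact space $\PSU(d)\times D_d$ under the continuous map $(u,Z)\mapsto u\cdot(Z\cdot m_w)$, hence compact; and $\widetilde W_d$ is finite, so $\Irr_{\atom}(d)$ is compact in $M_{2d,d}(\C)$.

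Finally, since $M_{2d,d}(\C)$ is Hausdorff, the compact set $\Irr_{\atom}(d)$ is closed in $M_{2d,d}(\C)$, hence closed in the open subset $\Irr(d)$ of $P_{\leq d}$. Therefore $\Irr_{\diff}(d)=\Irr(d)\setminus\Irr_{\atom}(d)$ is an open subset of the $3d^2$-dimensional manifold $\Irr(d)$; open subsets of smooth manifolds are smooth submanifolds of the same dimension by \cite[Proposition 5.1]{Lee12}, which finishes the argument. The only nonroutine step is the identification of $\Irr_{\atom}(d)$ as a finite union of orbits of $\PSU(d)\times D_d$, and this is handled by invoking the classification summarized in Subsection \ref{subsec:atomic-case}; everything else reduces to elementary topology.
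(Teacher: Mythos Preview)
Your proof is correct and follows essentially the same route as the paper: both arguments obtain compactness of $\Irr_{\atom}(d)$ by writing it as the image of the compact set $W\times D_d\times\PSU(d)$ under the continuous map $(w,Z,u)\mapsto(uA_wZu^*,uB_wZu^*,\C^d)$ via the classification of Subsection~\ref{subsec:atomic-case}, and then deduce the submanifold claim by taking the open complement. You simply spell out a bit more explicitly the partition step and the Hausdorff/open-subset reasoning that the paper summarizes as ``the rest of the statement easily follows.''
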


\begin{proof}
An irreducible P-module is either atomic or diffuse. Therefore, we have a partition as above.
Recall that $W_d$ is a set of representatives of prime words of length $d$ modulo cyclic permutations.
From \cite{BW23} we have that
$$\PSU(d)\times W_d\times \bS_1\to \Irr_{\atom}(d), (u,w,z)\mapsto u\cdot m_{w,z}$$ 
is a bijection, see notations from Section \ref{subsec:atomic-case}.
We immediately deduce that $\Irr_{\atom}(d)$ is a compact submanifold of $\Irr(d)$ of dimension $d^2$.
Hence, its complement $\Irr_{\diff}(d)$ must be open inside $\Irr(d)$.
Thus, $\Irr_{\diff}(d)$ is an open submanifold of $\Irr(d)$ of dimension $3d^2.$ 
\end{proof}

\subsubsection{Manifold of equivalence classes of diffuse irreducible representations.}

We prove the main theorem of this section using the following standard theorem on Lie group actions.

\begin{theorem}\label{theo:Lee}\cite[Theorem 21.10]{Lee12}
Let $\beta:G\act M$ be a smooth free proper action of a Lie group $G$ on a manifold $M$. 
Then the orbit space $G\backslash M$ is a topological manifold that admits a unique smooth structure for which the canonical map $M\onto G\backslash M$ is a submersion. Moreover, its dimension is $\dim(G\backslash M)=\dim(M)-\dim(G)$.
\end{theorem}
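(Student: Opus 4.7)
The plan is to establish the quotient manifold structure by constructing explicit slice charts compatible with the group action, following the standard strategy for proving the Quotient Manifold Theorem.

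First, I would verify the topological prerequisites on $Q:=G\backslash M$. Second countability of $Q$ is immediate from second countability of $M$ since the quotient map $\pi:M\onto Q$ is continuous and open (openness because for any open $U\subset M$ one has $\pi^{-1}(\pi(U))=\bigcup_{g\in G}g\cdot U$, an open union). Hausdorffness is where the properness hypothesis first enters: one considers the orbit relation $R:=\{(x,y)\in M\times M : G\cdot x=G\cdot y\}$ and observes that $R$ is the image of the proper map $\theta:G\times M\to M\times M$, $(g,x)\mapsto(x,g\cdot x)$. Properness of $\theta$ forces $R$ to be closed in $M\times M$, which is equivalent to Hausdorffness of $Q$.

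Second, I would construct a slice at each point $p\in M$. Equip $M$ with a $G$-invariant Riemannian metric (obtained by averaging, or directly using a partition of unity plus properness). Let $W\subset T_pM$ be the orthogonal complement of the tangent space $T_p(G\cdot p)$ to the orbit, which has dimension $\dim M-\dim G$ since freeness implies the orbit map $G\to G\cdot p$ is an immersion of maximal rank. Via the Riemannian exponential map, $W$ produces an embedded submanifold $S_p\ni p$ transverse to the orbit. The action map
\[
\Phi:G\times S_p\to M,\quad (g,s)\mapsto g\cdot s
\]
has invertible differential at $(e,p)$. Freeness gives injectivity on a neighborhood of $\{e\}\times\{p\}$; shrinking $S_p$ and using the local inverse, $\Phi$ becomes a local diffeomorphism, and then properness promotes this to a diffeomorphism of $G\times S_p$ onto a $G$-invariant open neighborhood of the orbit. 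This is the delicate step, and the main obstacle: without properness one could not globalize the local slice to rule out distinct group elements $g_n$ sending nearby slice points back close to $p$.

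Third, the composition $S_p\xhookrightarrow{}M\xrightarrow{\pi}Q$ gives a homeomorphism from $S_p$ onto an open neighborhood $V_p\subset Q$ of $[p]$; take its inverse as a chart. To check smooth compatibility on overlaps $V_p\cap V_q$, observe that the transition map $S_p\supset U\to S_q$ is obtained by following, for each $s\in U$, the unique $g(s)\in G$ with $g(s)\cdot s\in S_q$; this $g(s)$ depends smoothly on $s$ by the implicit function theorem applied to $\Phi$ restricted to a neighborhood of the identity, so the transition map is smooth. This defines a smooth atlas on $Q$ of dimension $\dim M-\dim G$.

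Finally, I would check that $\pi$ is a smooth submersion (in any slice chart it is the projection $G\times S_p\to S_p$ composed with local diffeomorphisms) and deduce uniqueness of the smooth structure: if $Q'$ denoted $Q$ with another compatible smooth structure, both identity maps $Q\to Q'$ and $Q'\to Q$ lift through $\pi$ to the identity on $M$, and the universal property of smooth submersions descends smoothness, giving a diffeomorphism. The dimension count drops out of the chart construction.
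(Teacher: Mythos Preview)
The paper does not prove this statement at all: it is simply quoted verbatim from Lee's textbook \cite[Theorem 21.10]{Lee12} and used as a black box to deduce the next theorem on the orbit space $\PSU(d)\backslash\Irr_{\diff}(d)$. Your sketch is essentially the standard proof found in Lee's book (construct slices via a transverse submanifold, show the action map $G\times S_p\to M$ is a diffeomorphism onto a tube using freeness and properness, and use the slices as charts on the quotient), so there is nothing to compare against on the paper's side.
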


By combining Theorems \ref{theo:pythag-equiv} and \ref{theo:Lee}
we deduce the following.

\begin{theorem}\label{theo:diffuse-d}
	Fix a natural number $d\geq 1$. 
	The action $\alpha:\PSU(d)\act\Irr_{\diff}(d)$ is smooth, free and proper. 
The orbit space $\PSU(d)\backslash \Irr_{\diff}(d)$ has a natural structure of a manifold of dimension $2d^2+1$. 
	Moreover, it is in bijection with the irreducible classes of diffuse representations of $F$ with P-dimension $d$.
	
	The same statements hold if we replace $F$ by $T,V$ or $\mathcal O$. 
\end{theorem}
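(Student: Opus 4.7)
The plan is to verify the three hypotheses (smoothness, freeness, properness) of the action $\alpha:\PSU(d)\act\Irr_{\diff}(d)$, invoke Theorem \ref{theo:Lee} to get the manifold structure on the orbit space, and then combine Theorem \ref{theo:pythag-equiv} with the existence of the smallest nontrivial sub-module (Theorem \ref{theo:min-invariant-subspace}) to match the orbits with irreducible equivalence classes. Smoothness is immediate, since conjugation by a unitary is a polynomial in the matrix entries. Properness is automatic because $\PSU(d)$ is compact and the ambient action is continuous: any continuous action of a compact group is proper.

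The only substantive point is freeness, and this is essentially a Schur argument at the level of P-modules. Suppose $u\in\U(d)$ stabilises $(A,B,\C^d)\in\Irr_{\diff}(d)$, i.e.~$uAu^*=A$ and $uBu^*=B$. Then $u$ commutes with both $A$ and $B$, and by taking adjoints also with $A^*$ and $B^*$. Any spectral projection $p$ of $u$ lies in the abelian von Neumann algebra generated by $u$ and thus commutes with $A,B$; hence $p\C^d$ is invariant under $A$ and $B$, i.e.~a sub-module. Irreducibility of $(A,B,\C^d)$ forces $p\in\{0,\id\}$ for every spectral projection, so $u$ is a scalar and therefore trivial in $\PSU(d)$. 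Theorem \ref{theo:Lee} then applies, yielding a manifold structure on the orbit space of dimension
$$\dim\Irr_{\diff}(d)-\dim\PSU(d)=3d^2-(d^2-1)=2d^2+1.$$

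It remains to identify $\PSU(d)\backslash\Irr_{\diff}(d)$ with the set of irreducible classes of diffuse P-representations of $F$ of P-dimension $d$. Given such a representation $\sigma$, Theorem \ref{theo:min-invariant-subspace} supplies a unique smallest nontrivial sub-module $\fK$ of its ambient Hilbert space, which is an irreducible diffuse P-module of usual dimension $d$; fixing any unitary identification $\fK\simeq\C^d$ produces a point of $\Irr_{\diff}(d)$, and a different identification alters this point by conjugation by an element of $\U(d)$, hence gives the same $\PSU(d)$-orbit. Conversely every point of $\Irr_{\diff}(d)$ gives, via $\Pi^F$, an irreducible diffuse P-representation by Theorem \ref{theo:pythag-irrep}. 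Theorem \ref{theo:pythag-equiv} shows that two such P-modules yield unitarily equivalent representations of $F$ if and only if they are $\U(d)$-conjugate, which is exactly the condition of lying in a common $\PSU(d)$-orbit. The statements for $T,V,\cO$ follow at once: the case of $\cO$ is already covered by Theorem \ref{theo:pythag-equiv}, and equivalences as $T$- or $V$-representations coincide with equivalences as $F$-representations by Corollary \ref{coro:P-rep-intertwinner} (which upgrades any $F$-intertwinner between diffuse representations to a $\cO$-intertwinner, a fortiori a $T$- or $V$-intertwinner).

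The main obstacle is really only the freeness step, and even that is transparent once one realises that $u$ commutes with the adjoints as well, reducing the question to the standard fact that the commutant of an irreducible *-representation is trivial.
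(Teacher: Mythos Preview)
Your proof is correct and follows essentially the same route as the paper's: verify smoothness, properness (via compactness of $\PSU(d)$), and freeness, then apply Theorem~\ref{theo:Lee} and invoke Theorem~\ref{theo:pythag-equiv} for the bijection. Your freeness argument via spectral projections of $u$ is a slightly more explicit version of the paper's one-line observation that a nontrivial $u$ intertwining $m$ would produce a proper sub-module; and you spell out the surjectivity of the bijection (every irreducible diffuse representation of P-dimension $d$ arises from a point of $\Irr_{\diff}(d)$) by invoking Theorem~\ref{theo:min-invariant-subspace}, which the paper leaves implicit in its reference to Theorem~\ref{theo:pythag-equiv}.
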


\begin{proof}
The action $\al:\PSU(d)\act \Irr_{\diff}(d)$ is a composition of matrix multiplications and taking adjoint of matrices. 
Hence, it defines a smooth action. 
Since $\PSU(d)$ is compact and $\alpha$ is continuous (because it is smooth) we deduce that $\alpha$ is proper.
Consider $u \in \PSU(d)$, $m \in \Irr_{\diff}(d)$ and suppose $u \cdot m = m$. 
If $u$ is nontrivial, then necessarily $m$ decomposes as a direct sum of two nontrivial sub-modules, a contradiction.
Therefore, $\alpha$ is free and satisfies all the assumptions of Theorem \ref{theo:Lee}.
Hence, the orbit space $\PSU(d)\backslash \Irr_{\diff}(d)$ is a smooth manifold with dimension equal to 
$$\dim(\Irr_{\diff}(d)) - \dim(\PSU(d)) = 3d^2 - (d^2-1) = 2d^2+1.$$
Thereafter the rest of the theorem follows from Theorem \ref{theo:pythag-equiv}.
\end{proof}

We now briefly prove Theorem \ref{thm:manifold-atomic} for the atomic case.

\begin{proof}[Proof of Theorem \ref{thm:manifold-atomic}]
The first item comes from \cite{BW23} and the observation that the map $(u,w,z)\mapsto u\cdot m_{w,z}$ is smooth.
By definition, $(u,w,z)\mapsto u\cdot m_{w,z}$ is also $\PSU(d)$-equivariant.
We obtain that $\{m_{w,z}:\ (w,z)\in W_d\times \bS_1\}$ is a set of representatives of the orbit space $\PSU(d)\backslash \Irr_{\atom}(d)$ which is isomorphic to the manifold $W_d\times \bS_1$.
The last statement follows from the main theorem of \cite{BW23}.
\end{proof}

\section{Representations of the Thompson groups and of the Cuntz alegbra in the literature}\label{sec:examples}
We present various families of representations of $F,T,V,\cO$ from the literature and interpret them using our language. 
We mainly enunciate facts without proofs but in the end answer various open questions. 
This last part is largely independent of the rest and provides a survey on representations of the Thompson groups and the Cuntz algebra. 

\subsection{Bratteli and Jorgensen's permutation representations of $\cO$}
Bratteli and Jorgensen introduced the widely celebrated family of \textit{permutation} representations which have had numerous important applications in wavelet theory and the study of fractals as iterated function systems \cite{Bratteli-Jorgensen-19}. These representations are defined generally for $\cO_N$; however, we shall only focus on the case when $N=2$ which can then be extended to the general case in the obvious way.
A representation $\pi$ of $\cO$ on $\scrH$ is said to be \textit{permutative} if there exists an orthonormal basis $\{\varphi_n\}_{n \in \N}$ for $\scrH$ that is stabilised by the generators of $\cO$; that is,
\[\pi(s_i)\varphi_k = \varphi_{m}\]
for all $i=0,1$, natural number $k \in \N$, and where $m=m(i,k)\in\N$ depends on $i,k$. 
The authors define the ``coding'' map 
$$\kappa: \N \rightarrow \cC:=\{0,1\}^{\N^*},\ n\mapsto x_1x_2\dots$$
from $\N$ to the Cantor space $\cC$ where
\[\pi(s_{x_k}^*\dots s_{x_2}^* s_{x_1}^*)\varphi_n \neq 0\]
for all $k \in \N$.

If $(\pi, \scrH)$ is a permutation representation, then the coding map simply identifies which ray each vector $\varphi_n$ is contained in. Furthermore, using the notation from Subsection \ref{subsec:diff-atom-rep}, we have $w\varphi_n = 0$ for all finite word $w$ that is not a finite prefix of $\kappa(n)$. This shows that $\pi$ is necessarily an atomic representation.

From this description it is not immediate but possible to find the P-dimension of $(\pi,\scrH)$ and the smallest complete sub-module of $(\pi(s_0)^*,\pi(s_1)^*,\scrH)$ (when it exists). 
Indeed, the family of vectors $(\varphi_n:\ n\in\N)$ permits to easily construct sub-modules of $\scrH$.
To any $S\subset\N$ we consider the closed linear subspace $\fH_S\subset\scrH$ spanned by the $\varphi_n$ with $n\in S$.
If for all $i=0,1$ and $n\in S$ there exists $m\in S$ satisfying that $\pi(s_i)^*\varphi_n = \varphi_m$, then $\fH_S$ is a sub-module of $\scrH$.
When all such $S$ are infinite we deduce that $\dim_P(\scrH)=\infty$. 
Here, we have that $p = \kappa(n)$ for some $n \in S$ and $p$ is necessarily aperiodic. 

Assume now that $(\pi,\scrH)$ is irreducible and there exists a finite $S$ so that $\fH_S$ is a sub-module of $\scrH$.
In that case there is a smallest such $S$, denoted $S_{\smal}$, whose associated span 
$$\fH_{\smal} = \textrm{span}\{\varphi_n:\ n\in S_{\smal}\}$$ 
is the desired smallest nontrivial (complete) sub-module of $\scrH$. 
Hence, the P-dimension of $\scrH$ is the cardinal of $S_{\smal}$.
Let $n \in S$ and $w$ be a period of $\kappa(n)$ (since $\pi$ is finite P-dimension, $\kappa(n)$ must be eventually periodic). Then $\fH$ is unitarily equivalent to the P-module $m_{w,1}$ with $(w,1)\in W_d\times \bS_1$ defined in Subsection \ref{subsec:atomic-case}. Hence, the restriction of $\pi$ to $F$ yields the quasi-regular representation $\lambda_{F/F_{p}}$ where $p=w^\infty$. 

It can be deduced that the permutation representations of Bratteli and Jorgensen are made of extensions to $\cO$ of {\it quasi-regular} representations of the Thompson groups (i.e.~the atomic representations without phases).

\subsection{Atomic representations of Dutkay, Haussermann and Jorgensen} 
Dutkay, Haussermann and Jorgensen expanded upon the seminal work of Bratteli and Jorgensen on permutation representations by introducing the family of purely atomic representations of $\cO$ in \cite{DHJ-atomic15}. The extension of the atomic representations to $\cO$ considered in the current authors' previous article \cite{BW23} are precisely the purely atomic representations from \cite{DHJ-atomic15}. 
The two studies share common features but interestingly some differences as well which we will briefly expose below.
Perhaps, the key differences between their studies and ours is that we have been mainly studying the small Hilbert space $\fH$ that carries the P-module structure in order to derive results on representations of the Thompson groups and the Cuntz algebra.
In contrast, the work of Dutkay--Haussermann--Jorgensen is more focused on the large Hilbert space $\scrH$ via the study of certain projectors in the atomic case (rather than the diffuse case).
Moreover, they only consider representations of the Cuntz algebra while we have been rather focusing in describing precisely the restrictions to $F,T,V$ of these representations.

Let $(\pi, \scrH)$ be a representation of $\cO$ with $S_i := \pi(s_i)$ for $i=0,1$. 
In \cite{DHJ-atomic15} the authors define a projection-valued measure $P$ on the Cantor space $\cC$. 
Then the following are defined:
\[S(\scrH) := \{p \in \cC : P(p) \neq 0\},\ \scrH_{\atom} := \oplus_{p \in S(\scrH)} P(p)\scrH,\ \scrH_{\textrm{cont}} := \scrH \ominus \scrH_{\atom}.\]
The authors then say $\pi$ is \textit{purely atomic} (or atomic for short) if $\scrH = \scrH_{\atom}$. Equivalently, $\pi$ is atomic if there exists a subset $\Omega \subset \cC$ such that $\oplus_{p \in \Omega}P(p) = \id$. In which case, $\Omega$ is called the support of $P$, $\supp(P)$.
It can be seen that $\scrH_{\atom}$ coincides with the definition of $\scrH_{\atom}$ given in \cite{BW23} while $\scrH_{\textrm{cont}}$ is the subspace $\scrH_{\diff}$. The authors then study atomic representations for the remainder of their article. 

Now suppose $(\pi, \scrH)$ is an atomic representation.
The authors define $\textrm{Orbit}(p)$ to be the set of all rays which are in the same tail equivalence class as $p$. From which, they decompose $\scrH_{\atom}$ into subrepresentations $P(\textrm{Orbit}(p))\scrH$ for rays $p$ in the support of $P$. 
The authors then prove that the subrepresentation $(\pi, P(\textrm{Orbit}(p))\scrH)$ is irreducible if and only if $\dim(P(p)) = 1$. Further, they show that two irreducible atomic representations, $\pi$ and $\ti\pi$, are equivalent if and only if:
\begin{enumerate}
	\item $\supp(P) = \supp(\ti P)$; and
	\item if $p \in \supp(P)$ is a periodic ray then there exists a unitary map $U : P(p) \rightarrow \ti P(p)$ such that $US_w = \ti S_w U$ on $P(p)$ where $w$ is the period of $p$.
\end{enumerate} 
The authors then conclude the main section of their paper by proving that when $\pi$ has {\it finite P-dimension} then
\[\oplus \{P(p)\scrH : p \textrm{ is a periodic ray in } \supp(P)\}\]
is the unique smallest nontrivial P-module in $\scrH$ which recovers the result of Theorem \ref{thm:smallest-module} in the present paper for the atomic case.

\begin{remark}Note that the permutation representations of Bratteli--Jorgensen are the extensions to $\cO$ of {\it quasi-regular} representations of the Thompson groups while the purely atomic representations of Dutkay--Haussermann--Jorgensen are the the extension to $\cO$ of {\it monomial} representations of the Thompson groups.
\end{remark}

\subsection{Barata--Pinto, Ara\'ujo--Pinto and Guimar\~aes--Pinto's representations of the Thompson groups}
Barata and Pinto in \cite{barata2019representations} studied the restriction of a specific class of Bratteli--Jorgensen's permutation representations of $\cO$ to obtain a family of irreducible pairwise inequivalent representations of $V$ indexed by the interval $[0,1]$. 
These representations were later proved to be irreducible and pairwise inequivalent when restricted to the smaller groups $F$ and $T$ by Guimar\~aes and Pinto \cite{Guimaraes-Pinto22}.

The class of permutation representations arise from the interval map $f(x) = 2x\ (\textrm{mod } 1)$ and acts on the Hilbert space $\scrH_x := \ell^2(\textrm{orb}(x))$ where $x \in [0,1]$ and $\textrm{orb}(x) = \{f^m(x) : m \in \Z\}$. The authors showed that the restriction to Thompson's group $V$ is given by the formula
\[\pi_x : V \rightarrow B(\scrH_x) : \pi_x(g)\delta_y = \delta_{g(y)} \textrm{ for all } g \in V, y \in \textrm{orb}(x)\]
where $\{\delta_y\}_{y \in \textrm{orb}(x)}$ is the canonical basis of $\scrH_x$. 

Observe that 
\[\langle \pi_x(g)\delta_y, \delta_y \rangle = 1\]
if $g(y) = y$ and otherwise equals $0$. Hence, let $p$ be the ray in $\cC$ associated to the real number $x$ obtained by its binary expansion (if $x$ is a dyadic rational the choice of binary representation is not important). Then it can be deduced that $\pi_x$ is equivalent to the quasi-regular representation of $V$ associated to $V/V_p$. This corroborates the above discussion on the general class of permutations representations from Bratteli and Jorgensen.

Note, the representations in \cite{barata2019representations} were later extended to the Higman--Thompson groups by Ara\'ujo and Pinto in \cite{Araujo-Pinto22} for which the above continues to hold after making suitable adjustments.

\subsection{Garncarek and Olesen's generalisation of the Koopman representation and Olesen's Bernoulli representations}
The classical Koopman representation of $F$ is the unitary representation $\kappa:F\act L^2([0,1])$ deduced from the usual action $F\act [0,1]$ that is defined by the formula:
$$\kappa(g)f= \left( \frac{ dg_*L}{dL} \right)^{1/2} \cdot f\circ g^{-1} \text{ for } g\in F, f\in L^2([0,1])$$
where $L$ is the Lebesgues measure.
Note that the action $F\act ([0,1],L)$ is {\it not} measure-preserving and thus we must introduce the Radon-Nikodym derivative term $\left( \frac{ dg_*L}{dL} \right)^{1/2}$ to ensure $\kappa$ is unitary.
This makes the formula more complicated but in fact is an advantage if one wants to deform $\kappa$.
Indeed, Garncarek constructed in \cite{Gar12} a one parameter family of irreducible representations of $F$ by {\it twisting} the Radon-Nikodym derivative term by a parameter $s\in\R$ as follows:
$$\kappa_s(g)f := \left( \frac{ dg_*L}{dL} \right)^{is}\cdot\kappa(g)f \text{ where } g\in F, f\in L^2([0,1]).$$

The set of one dimensional Pythagorean pairs can be naturally identified with the 3-sphere $\bS_3$.
Via this identification, it was observed by the current authors in \cite{BW22} that the family of representations $(\kappa_s:\ s\in [0,2\pi))$ forms the following circle $C$ in $\bS_3$:
$$C:=\{ (\frac{\omega}{\sqrt 2}, \frac{\omega}{\sqrt 2}):\ \omega\in \bS_1\}.$$
Olesen in his PhD thesis \cite{olesen2016thompson} (under the supervision of Haagerup and Musat, and following an advice of Monod) expanded on the pioneering work of Garncarek by generalising the representations above. 
Let $AC$ denote the set of increasing homeomorphisms $\psi$ of $[0,1]$ such that for all $g\in F$ we have that 
\[g^\psi := \psi^{-1} \circ g \circ \psi\]
is {\it absolutely continuous}. Then the family of representations $$(\kappa_s^\psi:\ s\in\R, \psi \in AC)$$ is given by 
$$\kappa_s^\psi(g) := \kappa_s(g^\psi) \text{ for } g\in F$$ 
where $\kappa_s$ is extended in the obvious way to be defined on absolutely continuous functions.

The author proved a nice characterisation of irreducibility and equivalence. The representations $\kappa_s^\psi$ are irreducible if and only if the action of $F^\psi$ on $([0,1], L)$ is ergodic. 
Further, if $t=s$, then $\kappa_s^\psi \cong \kappa_t^\varphi$ if and only if the measures $\psi_*L, \varphi_*L$ are equivalent. In the case when $t\neq s$, then the forward direction continues to hold; however, sufficient conditions for equivalence are unknown. 
While the above provide a nice interpretation of irreducibility and equivalence for $(\kappa_s^\psi)$ in terms of measure theoretic language, Olesen does note that it is not always obvious to distinguish between different classes of measures and thus can be difficult to detect equivalence classes of representations.

As one would expect, the above generalised family of representations are Pythagorean. To show this, we only need to provide an extension of $\kappa_s^\psi$ to a representation of $\cO$. 
We start by extending the Koopman representation of $F$ to the Cuntz algebra $\cO$.
Define the functions $g_i, g_i^*$ on $[0,1]$ for $i=0,1$:
\[g_0(x) = \chi_{[0, 1/2]}(x)2x,\ g_1(x) = \chi_{[1/2, 1]}(x)(2x-1),\ g_0^*(x) = \frac{x}{2},\ g_1^*(x) = \frac{x+1}{2}\]
where $x\in [0,1]$ and $\chi_A$ is the characteristic function for a subset $A \subset [0,1]$. Then define the isometries $S_i \in B(L^2([0,1]))$ by
\[S_i(f) = (g_i')^{1/2} \cdot f \circ g_i \textrm{ for all } f \in L^2([0,1]) \text{ and } i=0,1\]
whose adjoints are defined in the same way but replacing $g_i$ with $g_i^*$.
It is easy to verify that the above isometries defines a representation $\sigma:s_i\mapsto S_i,i=1,0$ of $\cO$ and is an extension of the Koopman representation of $F$. Note that the Koopman representation is equivalent to $\kappa_1^{\id}$ (where $\id$ is the identity function on $[0,1]$) and is also equivalent to the Pythagorean representation induced by $(1/\sqrt{2}, 1/\sqrt{2}, \C)$, see Proposition 6.2 in \cite{Brothier-Jones}. 

Now define $g_i^\psi := \psi^{-1} \circ g_i \circ \psi$ as a function on $[0,1]$ and define $(g_i^\psi)^*$ analogously. Finally, define the isometries $S_{i,s}^\psi \in B(L^2([0,1]))$ in the obvious way:
\[S_{i,s}^\psi(f) = ((g_i^\psi)')^{1/2+is} \cdot f \circ g_i^\psi \textrm{ for all } f \in L^2([0,1]).\]
It is not difficult to deduce that the above isometries define a representation $\sigma_s^\psi:\cO\act L^2([0,1])$ and its restriction to $F$ is equivalent to $\kappa_s^\psi$, thus proving that $\kappa_s^\psi$ is a Pythagorean representation.

While the above provides a very explicit description of how to extend $\kappa_s^\psi$ to $\cO$, it is not immediately clear nor trivial to find the P-dimension of $\kappa_s^\psi$ for a general $\psi \in AC$. Though, there are (nontrivial) constructible examples of $\psi$ such that $\kappa_s^\psi$ is diffuse and has infinite P-dimension. 
In these cases the work of Olesen goes beyond ours and provide answers on when {\it infinite diffuse} Pythagorean representations are irreducible and equivalent.

\noindent
{\bf Olesen's Bernoulli representations.}
We now discuss a subclass of the above representations which Olesen focuses on which are the so-called \textit{Bernoulli} representations. These are amenable to performing explicit calculations and are constructed by considering the family of Bernoulli measures $\{\mu_p\}_{p \in (0,1)}$ on the Cantor space $\{0,1\}^\N$: the infinite product measure $\beta_p^{\ot \N}$ on $\{0,1\}^{\N}$ so that $\beta_p(\{0\})=p$ for a fixed $0<p<1.$ 
The Bernoulli measure $\mu_p$ on $\{0,1\}^\N$ can be pulled back to a measure on $[0,1]$ from which an absolutely continuous function $\psi_p$ on $[0,1]$ can be obtained (see Remarks 5.1.27 and 5.2.10 in \cite{olesen2016thompson}). Then $\kappa_s^p := \kappa_s^{\psi_p}$ is termed a Bernoulli representation.
Using the result from Olesen stated in the previous subsection, it can be shown that the representations $$(\kappa_s^p: s\in [0, 2\pi), p \in (0,1))$$ are irreducible and pairwise unitarily inequivalent. 

We shall now specifically identify the P-modules that induce the Bernoulli representations. Continuing the notation from the previous subsection, we have that $(g_0^{\psi_p})^*(x) = px$ and $(g_1^{\psi_p})^*(x) = (1-p)x + p$ (see the discussion after Proposition 5.5.1 in \cite{olesen2016thompson}). Hence, the operators $(S_{i,s}^{\psi_p})^*$ are given by
\[(S_{0,s}^{\psi_p})^*(f)(x) = p^{1/2+is}f(px),\ (S_{1,s}^{\psi_p})^*(f)(x) = (1-p)^{1/2+is}f((1-p)x+p)\]
for $f \in L^2([0,1])$ and $x \in [0,1]$. Observe that
\[S_{0,s}^{\psi_p}\id = p^{1/2+is}\id \textrm{ and } S_{1,s}^{\psi_p}\id = (1-p)^{1/2 + is}\id.\]
Thus, $(p^{1/2+is}, (1-p)^{1/2+is}, \C\id)$ is a (complete) sub-module of $(S_{0,s}^{\psi_p}, S_{1,s}^{\psi_p}, L^2([0,1]))$. Then by Observation \ref{obs:tau-mod-sub-rep} it follows that the Pythagorean representation induced by the one dimensional P-module $(p^{1/2+is}, (1-p)^{1/2+is}, \C)$ is equivalent to $\kappa_s^p$. In fact, it can be seen that the Bernoulli representations are precisely all the diffuse Pythagorean representations arising from the one dimensional P-modules $(a,b,\C)$ such that $a$ and $b$ have same argument. 

\subsection{The Hausdorff representations of Mori, Suzuki and Watatani}
Mori, Suzuki and Watatani studied in \cite{MSW-07} representations on fractal sets from which they defined the family of \textit{Hausdorff} representations of $\cO_N$. We shall again only focus on the case $N=2$ as the general case can be easily deduced from it.

Fix $n\geq 1$ and consider {\it similitudes} $\sigma_i : \R^n \rightarrow \R^n$ for $i=0,1$ such that
\[\norm{\sigma_i(x) - \sigma_i(y)} = \lambda_i \norm{x - y}\]
for all $x,y \in \R^n$ where $0 < \lambda_i < 1$ are fixed (Lipschitz) constant. Then it is a standard result that there exists a unique compact set $K \subset \R^n$ such that 
\[K = \sigma_0(K) \cup \sigma_1(K)\]
which is called a {\it self-similar fractal set}.
We further assume that there exists $V\subset \R^n$ that is open and non-empty, so that $\sigma_i(V),i=0,1$ are disjoint and contained in $V$.
Now let $d := \dim_H(K)$ be the {\it Hausdorff dimension} of the fractal set $K$ and let $\mu$ be the Borel measure on $K$ given by the restriction of the normalised $d$-dimensional Hausdorff measure on $K$. It is then known that the Hausdorff dimension $d$ is the unique constant such that
\[\lambda_0^d + \lambda_1^d = 1\]
and it can be shown that $\mu(\sigma_0(K) \cap \sigma_1(K)) = 0$. Therefore, a representation $\pi_\mu : \cO \act L^2(K, \mu)$ can be defined given by the formula
\[\pi_\mu(s_i)f = g_i \cdot f \circ \kappa_i^{-1} \textrm{ for all } f \in L^2(L, \mu)\] 
where $g_i \in L^2(K, \mu)$ such that it is valued $\lambda_i^d$ on $\sigma_i(K)$ and $0$ elsewhere. The representation $\pi_\mu$ is called the {\it Hausdorff representation} for the system $\{\sigma_0, \sigma_1\}$.

The main result in \cite{MSW-07} is to precisely classify the equivalence classes of all Hausdorff representations (Theorem 3.1). This is achieved by proving that the Hausdorff representations are equivalent to a class of representations defined earlier in the paper, called the Bernoulli representations (see Example 2). These Bernoulli representations are constructed differently from the ones of Olesen.  
Indeed, the ones from Mori, Suzuki and Watatani are defined on $L^2(X,\nu)$ where $X = \{0,1\}^\N$ is the Cantor space and $\nu$ is the Bernoulli measure for some $p \in (0,1)$. The adjoints of the two isometries that induce the representation are given by
\[S_0^*(f)(x_1x_2\dots) = p^{1/2}f(0x_1x_2\dots),\ S_1^*(f)(x_1x_2\dots) = (1-p)^{1/2}f(1x_1x_2\dots)\]
for infinite binary strings $x_1x_2\dots\in X$ and $f\in L^2(X,\nu)$.
In Theorem 3.1, the authors show that a Hausdorff representation for a system $\{\sigma_0, \sigma_1\}$ with associated Lipschitz constants $\lambda_i$ and Hausdorff dimension $d$ is equivalent to the Bernoulli representation associated to $(\lambda_0^d, \lambda_1^d)$. Subsequently, the list $(\lambda_0^d, \lambda_1^d)$ forms a complete invariant for the Hausdorff representation. 
Conversely, in Proposition 3.2 the authors show that for any probability vector $(p, 1-p)$ with $0<p<1$ and any $0 < d \leq 1$, there exists a system of proper contractions $\{\sigma_0, \sigma_1\}$ such that 
\[(p, 1-p) = (\lambda_0^d, \lambda_1^d).\]
Therefore, the equivalence classes of Hausdorff representations are indexed by real numbers in $(0,1)$. However, it should be noted that the equivalence classes does not preserve the Hausdorff dimension of the underlying fractal set (see Example 3).

\noindent
{\bf Comparison with Olesen's representations and our classification.}
As one may expect, the Hausdorff representations of Mori--Suzuki--Watatani are an extension of (a subclass of) Olesen's Bernoulli representations of $F$ to representations of $\cO$. This can be easily seen by noting that if $1 \in L^2(X, \nu)$ is the function of constant value one on $X$ then $S_0^*(f)1 = p^{1/2}\cdot 1$ and $S_1^*(f)1 = (1-p)^{1/2}\cdot 1$. Hence, by a similar argument as from the previous subsection, the above representation is equivalent to the Pythagorean representation associated to $(p^{1/2}, (1-p)^{1/2}, \C)$. 
In particular, they are all diffuse representations of Pythagorean dimension one.
Interestingly, the Bernoulli representations from \cite{olesen2016thompson} and the Hausdorff representations from \cite{MSW-07} coincide despite arising from different motivations: Mori--Suzuki--Watatani were motivated by the study of representations on fractal sets while Olesen was inspired by the representations of Garncarek which are an analogue of the principal series representations of the Lie group $\text{PSL}(2,\R)$. Though, it is noted that the Bernoulli representations of \cite{olesen2016thompson} are slightly more generalised for it allows for the two Pythagorean operators to be twisted by multiplying by a common scalar in $\bS_1$.

\noindent{\bf Answer to a problem of Aiello--Conti--Rossi.}
In the recent survey \cite{Aiello-Conti-Rossi21} on the Cuntz algebra, the authors ask whether the Bernoulli representations of \cite{MSW-07} (and thus of \cite{olesen2016thompson}) are permutation representations (Question 9.41).
Our results show that the answer is no. Moreover, our analysis permits to deduce the stronger result: the restriction to $F$ of any Bernoulli representation is not unitarily equivalent to any permutation representations.
Indeed all Bernoulli representations (in the sense of \cite{MSW-07} or \cite{olesen2016thompson}) are {\it diffuse} representations of {\it P-dimension one}.
In contrast, the permutation representations are {\it atomic} and have {\it various Pythagorean dimensions}. 

\subsection{Aita--Bergmann--Conti's induced product representations and Kawamura's generalised permutation representations.}
Kawamura constructed a rich family of \textit{generalised permutations} (GP for short) of $\cO_N$ in \cite{Kawamura-05} which are inspired by the influential work of Bratteli and Jorgensen. 
To avoid introducing further notation, rather than providing the original description of these GP representations, we shall instead immediately state the P-modules from which they arise from. Further, for simplicity we shall only consider the case when $N=2$; however, it can be easily extended to the general case in the obvious way.

Consider $d\in\N^*\cup\{\infty\}$, the Hilbert space $\fH_d:=\ell^2(\{1,2,\cdots,d\})$ with orthonormal basis $\{e_1,e_2,\cdots\}$, and a $d$-tuple $z=(z_1,z_2,\cdots)$ where $z_k=(a_k,b_k)\in\bS_3$ (i.e.~$|a_k|^2+|b_k|^2=1)$ for all $k$.
Define now the weighted shift operators $A_z,B_z\in B(\fH_d)$ defined as
$$A_ze_k=a_{k+1}e_{k+1} \text{ and } B_ze_k=b_{k+1} e_{k+1}$$
taking indices modulo $d$ if $d$ is finite.
This yields a P-module $m_z:=(A_z,B_z,\fH_d)$.
The associated P-representation $\kappa_z$ is equivalent to a GP representation and $z$ is referred to as the GP vector. Conversely, all GP representations arise in this way.
Observe that when $d=1$ then the GP representations are precisely all the one dimensional P-representations.

\noindent
{\bf Aita--Bergmann--Conti's induced product representations.}
Aita, Bergmann, and Conti introduced in \cite{Aita-Bergmann-Conti97} the family of {\it induced product} representations (IP for short) of the \textit{extended Cuntz algebra} (also referred to as the generalised Cuntz algebra). The extended Cuntz algebra $\cO_\fH$ involves considering an initial Hilbert space $\fH$ and taking an inductive limit
\[\cdots \rightarrow B(\fH^{\oplus r}, \fH^{\oplus r+k}) \rightarrow B(\fH^{\oplus r+1}, \fH^{\oplus r+k+1}) \rightarrow \cdots\]
relative to some isometric embedding (for more details see \cite{CDPR94} where these $C^*$-algebras were first constructed). When the dimension of $\fH$ is finite then $\cO_\fH$ is isomorphic to the usual Cuntz algebra $\cO_{\dim(\fH)}$; however, when the dimension is infinite then $\cO_\fH$ is an extension of $\cO_\infty$. The IP representations are constructed by taking any infinite tuple of unit vectors $z = (z_1, z_2, \dots)$ in $\fH$ which yields an IP representation $\pi_z$ of $\cO_\fH$. 
In \cite{Aita-Bergmann-Conti97} the authors took $\fH = L^2(G)$ for some locally compact group $G$ and were able to derive an equivalent condition for $G$ to be amenable based on the existence of an IP representation satisfying certain properties. Bergmann and Conti then further investigated in a subsequent article \cite{Bergmann-Conti03} the intertwinner spaces of the IP representations.

Kawamura showed that when $\fH = \C^d$ then the IP representation $\pi_z$ is equivalent to the GP representation $\kappa_z$ of $\cO_d$, see \cite[Theorem 3.3]{Kawamura-05}. Hence, all IP representations are equivalent to a GP representation associated to an infinite GP vector. Hereby, for the remainder of this subsection, when $z$ is an infinite vector we shall use $\kappa_z$ to denote both the GP and IP representation associated to $z$. We shall continue to refer to $z$ as being the GP vector and when $z$ is finite then $\kappa_z$ will solely refer to the associated GP representation.

\noindent
{\bf Atomic and diffuse IP and GP representations.}
From an {\bf irreducible} representation $\kappa$ we determine which are atomic and diffuse. First suppose $\kappa$ has a {\it finite} GP vector $z = (z_1, z_2, \dots, z_d)$ with $z_k = (a_k, b_k)$. Then $\kappa$ is atomic if and only if for each $k = 1, \dots, d$ either $\vert a_k \vert = 1$ or $\vert b_k \vert = 1$. In this case, set 
$$
w_k =\begin{cases} 0 \text{ if } |a_k|=1 \\
1 \text{ if } |b_k|=1
\end{cases} 
\text{and } 
\varphi_k =\begin{cases} a_k \text{ if } |a_k|=1 \\
b_k \text{ if } |b_k|=1
\end{cases} 
$$
and form:
$$w = w_1\dots w_d\in\{0,1\}^d \text{ and } \varphi:=\prod_{k=1}^d \varphi_k\in\bS_1.$$
Then the P-module $m_z$ is unitarily equivalent to $Z \cdot m_{w}$ where $Z=\Diag(\varphi_1,\cdots,\varphi_d)$ is the unitary diagonal matrix with entries $\varphi_k$ (see the notation from Subsection \ref{subsec:atomic-case}).
Furthermore, the restriction of $\kappa$ to $V$ is equivalent to the irreducible monomial representation $\Ind_{V_w}^V \chi_\varphi^w$. In the special case that $\varphi= 1$ then $\pi$ is a permutation representation of Bratteli and Jorgensen. 
The converse situation to the above is if at least one of the pairs $(a_k, b_k)$ have nonzero values, for both $a_k$ and $b_k$, in which case $\kappa$ is a diffuse representation. In particular, if $d=1$ and $\Arg(a_1) = \Arg(b_1)$ then we recover the Bernoulli representations of Olesen and Mori--Suzuki--Watatani.

Now suppose the GP vector of $\kappa$ is infinite (here $\kappa$ refers to both the IP and GP representation). 
Then $\kappa$ is atomic if and only if there exists a sequence $\varphi_k \in \{a_k, b_k\}$ such that $\prod_{k=1}^\infty \varphi_k$ is nonzero. If so, set $p_k = 0$ if $\varphi_k = a_k$ otherwise set $p_k = 1$ and form the ray $p = p_1p_2\dots$. Then the restriction of $\kappa$ to $V$ is equivalent to the irreducible quasi-regular representation $\lambda_{V/V_p}$ (hence the phase disappears). 
In particular, this shows that $\kappa$ is a permutation representation in the sense of Bratteli--Jorgensen.

\noindent
{\bf Classification of IP and GP representations.}
The GP vector $z$ is called periodic if there exists a vector $y$ and $n > 1$ such that $y^n = z$ and is called asymptotically periodic if there exists a positive integer $p$ such that 
$$\sum_{n=1}^\infty \left( 1- \vert \langle z_n, z_{n+p} \rangle \vert  \right)< \infty.$$
Kawamura and Bergmann--Conti derived very powerful results regarding the irreducibility and equivalence of GP and IP representations. 

In the {\it finite} case, Kawamura proved that $\kappa_z$ is irreducible if and only if $z$ is not periodic. If $z$ is periodic with $z = y^n$ and $y$ is non-periodic then the GP representation can be decomposed as a direct sum of irreducible GP subrepresentations:
\[\kappa_z \cong \oplus_{k=0}^{n-1} \kappa_{y_k} \text{ where } y_k:=e^{2\pi ik/n}y.\]
Furthermore, Kawamura showed that two GP representations with finite GP vectors are equivalent if and only if their GP vectors are equal up to cyclic permutation (and are necessarily of equal length). 

In the {\it infinite} case, Bergmann--Conti in \cite{Bergmann-Conti03} proved that $\kappa_z$ is irreducible if and only if $z$ is not asymptotically periodic. 
Additionally, they show that $\kappa_y$ and $\kappa_{z}$ are equivalent if and only if there exist two non-negative integers $p,q$ such that 
$$\sum_{n=1}^\infty \left( 1-\vert \langle y_{n+p}, z_{n+q} \rangle \vert \right) < \infty.$$
Kawamura extended the above results by proving that $\kappa_z$ is not equivalent to any representation with finite GP vector.

\noindent
{\it P-dimension.}
For a finite non-periodic GP vector, it is not difficult to verify that the associated P-module is irreducible. Hence, from the above decomposition result and from the results of the present paper, it follows that for any GP vector of finite length $d$, the associated representation  has P-dimension $d$.  
Now, consider a GP vector $z$ of infinite length with associated P-module $(A_z, B_z, \fH_z)$. Let $\fK \subset \fH_z$ be a finite dimensional subspace and denote $P_{\fK}$ to be the projection from $\fH$ to $\fK$. Since $A_ze_k, e_k$ are orthogonal vectors for all $k \in \N^*$ it can be observed that $\lim_{n\to\infty} P_{\fK}(A^nz) = 0$ for all $z \in \fK$. Hence, it can be deduced that $\fK$ cannot be a sub-module of $\fH_z$ and thus the representation $\kappa_z$ has infinite P-dimension. In particular, all IP representations have infinite P-dimension.

The results of Bergmann--Conti and Kawamura concerning finite GP vectors giving atomic representations corroborate ours.
If $z$ is an infinite GP vector that is not asymptotically periodic and so that all the products $\prod_k \varphi_k$ are equal to zero (for all possible choices of $\varphi_k\in\{a_k,b_k\}, k\geq 1$), then by Bergmann--Conti its associated representation is diffuse and irreducible (and necessarily of infinite P-dimension). 
Therefore, Bergmann--Conti's results go beyond ours and provide interesting insights for classifying infinite P-dimensional representations.

{\bf Restriction to the Thompson groups.}
Kawamura and Bergmann--Conti exclusively studied the representations of $\cO$ and did not consider their restriction to the Thompson groups. Note that proving irreducibility and inequivalence is harder for restrictions of representations to smaller groups. Hence, the classification results of Bergmann--Conti on (infinite P-dimensional) diffuse GP representations do not immediately extend to the Thompson groups. However, we are able to easily show that they do extend using the results in the present paper. Indeed, we are able to apply Theorem \ref{theo:isom-diff-functor} which asserts that the representation theory for $F,T,V,\cO$ are identical for \textit{all} diffuse representations. Hence, we obtain that the infinite P-dimensional diffuse GP representations of the Thompson groups are irreducible and equivalent if and only if their extension to $\cO$ are irreducible and equivalent, respectively. In particular, to the best of our knowledge, this provides the first \emph{explicit} family of irreducible and pairwise inequivalent diffuse representations of $F$ that have infinite P-dimension. 

\subsection{Extension of Pythagorean representations to the 2-adic ring C*-algebra of the integers.}
The {\it 2-adic ring $C^*$-algebra of the integers} $\cQ$ is the universal $C^*$-algebra generated by a unitary $u$ and an isometry $s_1$ satisfying the two relations
\[s_1u = u^2s_1 \text{ and } s_1s_1^* + us_1s_1^*u^* = 1.\]
This algebra was closely studied by Larsen and Xi in \cite{Larsen-Li-12} (but note that it previously appeared among different classes of C*-algebras, see Remark 3.2 of \cite{Larsen-Li-12}).
The C*-algebra $\cQ$ enjoys similar properties to $\cO$ such as being simple, purely infinite and nuclear. Interestingly, there is a natural embedding of $\cO$ inside $\cQ$ via
\[s_0 \mapsto us_1,\ s_1 \mapsto s_1.\]
Henceforth, we shall identify $\cO$ with its copy inside $\cQ$. Surprisingly, while the embedding is straightforward, the properties of the inclusion is not well understood. For a discussion on the known properties and open questions, we recommend the reader the recent survey by Aiello, Conti and Rossi \cite{Aiello-Conti-Rossi21} (see Section 9). 

One area of particular interest is the relationship between the categories $\Rep(\cO)$ and $\Rep(\cQ)$. It is known that every permutation representation of $\cO$ can be (but not necessarily uniquely) extended to a representation of $\cQ$, which shall also be called a permutation representation. Further, the restriction of irreducible permutation representations of $\cQ$ to $\cO$ preserves equivalence classes. The restriction also preserves irreducibility with the only exception being the so-called \textit{canonical} representation.

For diffuse representations we shall show there exists an even stronger rigidity statement which extends Theorem \ref{theo:isom-diff-functor}. This result is essentially an application of Proposition 4.1 in \cite{Larsen-Li-12}. However, we provide a brief proof (which is in the same vein as the original proof by Larsen and Li) to demonstrate the power of the diagrammatic calculus we have developed. 

Note, we say a representation of $\cQ$ is {\it diffuse} if its restriction to $\cO$ is diffuse and we define $\Rep_{\diff}(\cQ)$ as the corresponding full subcategory of $\Rep(\cQ)$.

\begin{theorem}\label{theo:O-Q}
	The categories $\Rep_{\diff}(\cO)$ and $\Rep_{\diff}(\cQ)$ are isomorphic.
\end{theorem}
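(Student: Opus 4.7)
The plan is to construct, for every diffuse representation $\sigma : \cO \to B(\scrH)$, a canonical unitary $U \in B(\scrH)$ satisfying $U S_1 = S_0$ and $S_1 U = U^2 S_1$ (where $S_i := \sigma(s_i)$), so that setting $\sigma(u) := U$ extends $\sigma$ to a $\cQ$-representation. This extension, combined with the obvious restriction functor $\Rep_{\diff}(\cQ) \to \Rep_{\diff}(\cO)$ coming from the embedding $\cO \hookrightarrow \cQ$, will give the desired isomorphism of categories.

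First I would derive a formula for $U$. The two $\cQ$-relations together force the recursion $U \xi = S_0 S_1^* \xi + S_1 U S_0^* \xi$, obtained by decomposing $\xi = S_0 S_0^* \xi + S_1 S_1^* \xi$ via the Cuntz identity. Iterating produces a partial sum plus a remainder $S_1^n U (S_0^*)^n \xi$, suggesting the definition
\[
U \xi := \sum_{k=0}^{\infty} S_1^k S_0 S_1^* (S_0^*)^k \xi.
\]
The ranges $S_1^k S_0(\scrH)$ are pairwise orthogonal by the Cuntz relations, so convergence and isometry of $U$ both reduce to the computation
\[
\sum_{k=0}^{\infty} \|S_1^*(S_0^*)^k \xi\|^2 = \sum_{k=0}^{\infty} \|\rho_{0^k 1} \xi\|^2 = \|\xi\|^2,
\]
where the last equality follows from the orthogonal decomposition $\id = \rho_{0^n} + \sum_{k=0}^{n-1} \rho_{0^k 1}$ associated to the standard dyadic partition $\{0^n, 0^{n-1}1, \dots, 01, 1\}$, together with the key input that $\rho_{0^n} \xrightarrow{s} 0$ for any diffuse representation (Corollary 2.4 of \cite{BW23}, already invoked in the proof of Proposition \ref{prop:pyth-vNA}).

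The main obstacle---leveraging diffuseness to make the series well-defined---is thereby dispatched. The remaining verifications are then routine: the relation $US_1 = S_0$ comes from $S_1^*(S_0^*)^k S_1 = 0$ for $k \geq 1$, the relation $US_0 = S_1U$ is the telescoping identity built into the series (and together these yield $S_1 U = U^2 S_1$), and surjectivity of $U$ follows from the orthogonal decomposition $\scrH = \bigoplus_{n \geq 0} S_1^n S_0(\scrH)$, valid because the tail $\bigcap_n S_1^n(\scrH) = \bigcap_n \rho_{1^n}(\scrH)$ is trivial by the symmetric vanishing $\rho_{1^n} \xrightarrow{s} 0$ (again from diffuseness). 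Uniqueness of $U$ is immediate: any other solution satisfies the same recursion, hence differs from $U$ by the vanishing remainder $S_1^n U'(S_0^*)^n \xi$.

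To conclude, note that $U$ is built canonically from $S_0, S_1$ via an SOT-convergent series of polynomials in $S_i, S_i^*$. Consequently, any bounded operator $\theta : \scrH \to \scrH'$ intertwining two diffuse $\cO$-representations automatically intertwines the associated unitaries $U, U'$. This shows the extension $\sigma \mapsto \tilde\sigma$ defines a functor $\Rep_{\diff}(\cO) \to \Rep_{\diff}(\cQ)$ that is inverse to the restriction functor on both objects and morphism spaces, establishing the claimed isomorphism of categories.
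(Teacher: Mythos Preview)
Your proof is correct and follows essentially the same approach as the paper: both define the identical unitary $U=\sum_{k\geq 0} S_1^k S_0 S_1^*(S_0^*)^k$ (the paper writes it as $\tau_0^*\tau_1+\sum_{k\geq 1}\tau_{1^k0}^*\tau_{0^k1}$, which is the same thing since $S_i=\tau_i^*$), use diffuseness via $\rho_{0^n},\rho_{1^n}\xrightarrow{s}0$ to establish convergence and unitarity, and deduce the functorial isomorphism from the canonical nature of the construction. Your derivation of the formula from the recursion $U=S_0S_1^*+S_1US_0^*$ and your explicit uniqueness argument are nice additions not spelled out in the paper, but the core argument is the same.
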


\begin{proof}
Restricting representations of $\cQ$ to its subalgebra $\cO$ provides the obvious forgetful functor $\Rep_{\diff}(\cQ)\to\Rep_{\diff}(\cO)$.
Let $(\sigma^\cO, \scrH)$ be a diffuse representation of $\cO$. 
Recall our notations: $\sigma^\cO(s_i) = \tau^*_i$ for $i=0,1$ for the partial isometries of Subsection \ref{sec:partial-isometries}. We wish to construct a unitary operator $U$ corresponding to the generator of $\cQ$ using the diagrammatic partial isometries $\tau_i$. 
Define $U \in B(\scrH)$ as
	\[U = \tau_0^*\tau_1 + \lim_{n\to\infty} \sum_{k=1}^n \tau^*_{1^k0}\tau_{0^k1}\]
	where the limit is taken in the strong operator topology. The mapping of the different vertices in the expression for $U$ are shown below where red dots are moved to blue dots.
	\begin{center}
		\begin{tikzpicture}
			\draw (0,0)--(-.7, -.5);
			\draw (0,0)--(.7, -.5);
			\draw (-.7, -.5)--(-1.1, -1);
			\draw (-.7, -.5)--(-.3, -1);
			\draw (-1.1, -1)--(-1.5, -1.5);
			\draw (-1.1, -1)--(-.7, -1.5);
			\draw (.7, -.5)--(1.1, -1);
			\draw (.7, -.5)--(.3, -1);
			\draw (1.1, -1)--(1.5, -1.5);
			\draw (1.1, -1)--(.7, -1.5);
						
			\node[] at (-.7, -.55) {$\textcolor{blue}\bullet$};
			\node[] at (-.3, -1.05) {$\textcolor{red}\bullet$};
			\node[] at (-.7, -1.55) {$\textcolor{red}\bullet$};
			\node[] at (.7, -.55) {$\textcolor{red}\bullet$};
			\node[] at (.3, -1.05) {$\textcolor{blue}\bullet$};
			\node[] at (.7, -1.55) {$\textcolor{blue}\bullet$};
			
			\draw[-{Latex[length=2mm]}] (.7, -.55) to[bend right] (-.6, -.55);
			\draw[-{Latex[length=2mm]}] (-.3, -1.05) to[bend right] (.25, -1.05);
			\draw[-{Latex[length=2mm]}] (-.7, -1.55) to[bend right] (.65, -1.55);
			
		\end{tikzpicture}%
	\end{center}
	We claim that $U$ is the unitary operator we are after. Indeed, the assumption of diffuseness ensures that the limit in the strong operator topology exists and thus $U$ is well-defined. Further, from the above diagram it is evident that each of the partial isometries in the expression for $U$ have pairwise orthogonal initial (resp. final) spaces. Again, the diagram and diffuseness assures that the direct sum of the initial (resp. final) spaces is equal to the entire space. Therefore, it can be concluded that $U$ is a surjective isometry and thus is unitary.
	
	From there, it is straightforward to verify that $\tau^*_1U = U^2\tau^*_0$, $\tau_1UU^*\tau_1^* = \id$. Hence, from $\sigma^\cO$ we obtain a representation $\sigma^\cQ$ acting on the \textit{same} Hilbert space.
	Additionally, $\tau_0^* = U\tau_1^*$ which shows that $\sigma^\cQ$ is the (unique) extension of $\sigma^\cO$ to $\cQ$.
	Furthermore, by the canonical construction of $U$ (it does not depend on the specific representation of $\cO$), any intertwinner between diffuse representations of $\cO$ extends to an intertwinner between the associated representations of $\cQ$. Hence, the above mapping defines a functor $\Rep_{\diff}^{\cO} \rightarrow \Rep_{\diff}^\cQ$ which is bijective on the morphism spaces and on objects. This completes the proof.
\end{proof}

This rigidity statement combined with our analysis on Pythagorean representations yields powerful results for classifying representations of the 2-adic ring C*-algebra.

\begin{remark}\label{rem:2-adic}
	From the above proof, it is apparent that the requirement of the representations being diffuse can be relaxed to only require that the representations arise from P-modules such that $A^n, B^n \xrightarrow{s} 0$ (equivalently, $\sigma^\cO(s_i^*)^n \xrightarrow{s} 0$). By \cite{BW23}, this is equivalent to the restriction of the representations to $F$ being weakly mixing (i.e.~does not contain any finite dimensional subrepresentations).
\end{remark}

\noindent
{\bf Partial answers to questions of Aiello--Conti--Rossi.}
In \cite{Aiello-Conti-Rossi21} the authors ask whether the results regarding the extension of permutation representations to $\cQ$ in the discussion preceding the proposition can be extended more generally. In particular, they ask the following (Questions 9.37, 9.38).

Let $\sigma_1, \sigma_2$ be irreducible representations of $\cQ$.
	\begin{enumerate}
		\item If their restrictions to $\cO$ are equivalent, does this imply that $\sigma_1, \sigma_2$ are equivalent?
		\item If $\sigma_1$ is not equivalent to the canonical representation then is its restriction to $\cO$ also irreducible?
	\end{enumerate}

The above theorem shows that this is indeed the case for the large class of diffuse representations (and can be further extended as explained in the above remark). Moreover, every representation of $\cO$ is a direct sum of a diffuse and atomic representation by \cite{BW22}. Therefore, it only remains to check the above questions for atomic representations. However, atomic representations can be considered as generalisations of permutation representations, where additional twists by scalars in $\bS_1$ are introduced. Hence, for the first question we anticipate an affirmative answer. For the second question we also anticipate an affirmative answer after additionally excluding the atomic representations that are derived by twisting the {\it canonical} representation.

\subsection{Jones' Wsyiwyg representations of $F$}

We now provide a family of representations constructed by Jones that are {\it not} Pythagorean \cite{Jones21}.
The celebrated theorem of Jones shows that for any $\delta$ in the set $$J:=\{2\cos(\pi/n):\ n\geq 3\}\cup [2,\infty)$$
there exists a subfactor with index $\delta^2$ and no other indices are allowed \cite{Jones83}.
In particular, at each $\delta\in J$ we may form the so-called Temperley--Lieb--Jones planar algebra $\TLJ_\delta$ made of string diagrams (from Kauffman diagrams \cite{Kauffman90}) and having a {\it positive definite} trace.
In each of $\TLJ_\delta$ there is a category $\cC_\delta$ generated (diagrammatically) by a single trivalent element as defined in \cite[Proof of Proposition 4.3]{Morrison-Peters-Snyder17}.
Using Jones' technology, by sending the tree with two leaves $Y$ to this trivalent vertex we obtain a unitary representation $\pi_\delta:F\act \scrH_\delta$ of Thompson's group $F$ that may be extended to $T$ using the planar algebra rotation tangle.
Visually, this procedure consists in taking a tree diagram describing an element of $F$ and to interpret it as a string diagram in the category $\cC_\delta$. 
This produces a scalar which corresponds to a matrix coefficient of the representation $\pi_\delta$ of $F$.
For this reason, Jones called them {\it Wysiwyg representations}: ``what you see is what you get''.

Jones found that there exists a unit vector $\Psi_\delta\in\scrH_\delta$ which satisfies that 
\begin{equation}\label{eq:vector}\pi_\delta(x_0)\Psi_\delta = \Psi_\delta \textrm{ and } \ker(\pi_\delta(x_0) - \id) = \C\Psi_\delta\end{equation}
where $x_0$ is the classical first generator of Thompson's group $F$.
He restricted the carrier Hilbert space $\scrH_\delta$ of the representation to the cyclic component associated to $\Psi_\delta$; we continue to denote by $\pi_\delta:F\act\scrH_\delta$ this subrepresentation.
Equation \ref{eq:vector} implies that $\pi_\delta$ is irreducible and moreover $\pi_\delta$ remembers all numerical invariant obtained from the vector $\Psi_\delta$.
This implies that the family $(\pi_\delta:\ \delta\in J)$ are pairwise unitary inequivalent.
This is such a beautiful result: planar algebras permit to construct a one parameter family of pairwise inequivalent irreducible unitary representations of Thompson's group $F$ that are indexed by the celebrated set $J$!

Note that these representations are constructed using Jones' technology (from a functor starting from the category of binary forests and ending in $\cC_\delta$). Although, this functor is very different from the ones considered in this article and indeed the representations $\pi_\delta$ obtained are not Pythagorean as explained below.

\begin{lemma}
	For any $\delta\in J$, the Wsyiwyg representation $\pi_\delta$ is not equivalent to any Pythagorean representation.
\end{lemma}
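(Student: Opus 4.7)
The plan is to derive a contradiction by comparing the fixed subspace of $\pi_\delta(x_0)$ with what is possible in any Pythagorean representation. Suppose, for contradiction, that $\pi_\delta$ is unitarily equivalent to some Pythagorean representation $\sigma:F\act\scrH$. Since $\pi_\delta$ is irreducible, so is $\sigma$, and the $\sigma(x_0)$-fixed subspace of $\scrH$ must be exactly one-dimensional. Using the canonical decomposition $\sigma = \sigma_{\diff}\oplus\sigma_{\atom}$ recalled in Subsection~\ref{subsec:diff-atom-rep}, irreducibility forces $\sigma$ to be either fully diffuse or fully atomic, and I would treat these two cases separately.

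In the diffuse case, I would invoke Proposition~2.8 of \cite{BW22} (as recalled in the proof of Proposition~\ref{prop:pyth-vNA}): for every $g\in F$,
\[
\frac{1}{n}\sum_{k=0}^{n-1}\sigma(g^k)\xrightarrow{s}\id-\rho_{\supp(g)}.
\]
The left hand side converges (by the mean ergodic theorem) to the orthogonal projection onto the $\sigma(g)$-fixed subspace. Applied to $g=x_0$: the only fixed points of $x_0$ on $[0,1]$ are the two endpoints, so $\supp(x_0)$ is the full Cantor space $\cC$, giving $\rho_{\supp(x_0)}=\id$. Hence $\sigma(x_0)$ has trivial fixed subspace, contradicting the existence of $\Psi_\delta$.

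In the atomic case, I would use the classification summarised in Subsection~\ref{subsec:atomic-case}: every irreducible atomic Pythagorean representation of $F$ is unitarily equivalent to an induced representation $\Ind_{F_p}^F\chi$ (or with $\widehat F_p$ in place of $F_p$) for some non-endpoint ray $p\in\cC$ and a one-dimensional character $\chi$ — the endpoint case $|w|=1$ only yields reducible P-representations of $F$. A standard computation then shows that an $F_p$-coset $[g]$ is fixed by $x_0$ if and only if $g(p)$ is a fixed point of $x_0$ in $\cC$. But $F$ preserves the set of non-endpoint rays, so $g(p)$ remains a non-endpoint, whereas the only $x_0$-fixed rays in $\cC$ are the two endpoints $0^\infty$ and $1^\infty$. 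Hence no coset is $x_0$-fixed, so $\sigma(x_0)$ again has trivial fixed subspace, contradicting the one-dimensional fixed subspace of $\pi_\delta(x_0)$.

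The hard part will be the atomic step: one must check that the above argument uniformly covers all irreducible atomic Pythagorean classes — both the finite P-dimensional ones coming from periodic rays and the infinite P-dimensional ones coming from aperiodic rays — and both the $F_p$ and $\widehat F_p$ variants. A secondary delicate point in the diffuse step is to interpret $\rho_{\supp(x_0)}$ correctly when $\supp(x_0)$ is the whole Cantor space, which amounts to identifying $\rho_\cC$ with $\rho_\emptyset=\id$ via the paper's conventions on the partial isometries $\tau_\nu$.
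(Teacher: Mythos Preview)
Your diffuse case is correct and matches the paper exactly (you even spell out more carefully why $\rho_{\supp(x_0)}=\id$). For the atomic case you follow the paper's \emph{second} proof, which invokes the classification of irreducible atomic P-representations as monomial representations attached to parabolic subgroups $F_p$. The paper's \emph{primary} proof of the atomic case is more elementary and avoids the classification entirely: starting from a nonzero $\sigma(x_0)$-fixed vector $\Omega$, one reads off from the tree diagram of $x_0$ that $z:=\tau_0(\Omega)$ satisfies $\tau_0(z)=z$ (symmetrically for $\tau_1$), which forces $\C z$ to be an $F$-invariant line, contradicting irreducibility of $\pi_\delta$. This direct argument works uniformly without splitting into periodic/aperiodic or $F_p$/$\widehat F_p$ subcases, which is exactly the bookkeeping you flagged as ``the hard part''.

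There is one genuine (though easily repaired) gap in your atomic argument. You deduce ``$\sigma(x_0)$ has trivial fixed subspace'' from ``no coset is $x_0$-fixed''. That implication is false in general: for an $\ell^2$-space with $x_0$ acting by (twisted) permutation of the basis $\{\delta_{[g]}\}$, a fixed vector must have constant modulus along each $x_0$-orbit, so what you need is that every $x_0$-orbit on $F/F_p$ is \emph{infinite}, i.e.\ that $x_0$ has no \emph{periodic} cosets, equivalently no periodic points on $\cC$ other than the endpoints. This is true --- a short computation from the tree diagram of $x_0$ shows that any non-endpoint ray eventually lands in the $0$-cylinder and is then pushed to $0^\infty$ --- but it must be stated and checked, not just the weaker ``no fixed cosets'' claim. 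Once you replace ``fixed'' by ``periodic'' throughout that paragraph, your argument goes through.
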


\begin{proof}
Fix $\delta\in J$ and consider the associated Wysiwyg representation $\pi_\delta:F\act\scrH_\delta$ with cyclic vector $\Psi_\delta$.	 
Suppose that $\pi_\delta$ was equivalent to some Pythagorean representation $\sigma:F\act\scrH$. Then there exists a nonzero vector $\Omega \in \scrH$ such that $\sigma(x_0)\Omega = \Omega$ and $\ker(\sigma(x_0) - \id) = \C\Omega$ where $x_0$ is defined as above and its realisation as a tree diagram is shown below. 
	\begin{center}
		\begin{tikzpicture}[baseline=0cm, scale = 1]
			\draw (0,0)--(-.5, -.5);
			\draw (0,0)--(.5, -.5);
			\draw (-.5, -.5)--(-.9, -1);
			\draw (-.5, -.5)--(-.1, -1);
			
			\node[label={\normalsize $x_0 = ($}] at (-1.5, -1) {};		
			\node[label={\normalsize $,$}] at (.65, -1) {};		
		\end{tikzpicture}%
		\begin{tikzpicture}[baseline=0cm, scale = 1]
			\draw (0,0)--(-.5, -.5);
			\draw (0,0)--(.5, -.5);
			\draw (.5,-.5)--(.1,-1);
			\draw (.5,-.5)--(.9,-1);
			
			\node[label={\normalsize $)$}] at (1.1, -1) {};
		\end{tikzpicture}%
	\end{center}
	Since $\sigma$ is irreducible and Pythagorean it must be either diffuse or atomic.
	If $\sigma$ is diffuse then from Proposition 2.8 in \cite{BW22}, the space $\ker(\sigma(x_0) - \id)$ must be trivial, a contradiction. 
	Hence, $\sigma$ must be atomic if $\sigma$ is Pythagorean. 
	As a simple observation, either $\tau_{0}(\Omega)$ or $\tau_{1}(\Omega)$ must be nonzero. By the symmetry of the tree diagram for $x_0$, without loss of generality  suppose we have the former. Then since $\sigma(x_0)\Omega = \Omega$ this implies $\tau_0(z) = z$ where $z = \tau_0(\Omega)$. However, this also implies that $\sigma(g)z = z$ for all $g\in F$. Hence, the subspace $\C z \subset \scrH$ forms a one dimensional subrepresentation of $\sigma$ which is equivalent to the trivial representation. This contradicts the fact that $\pi_\delta$ is irreducible. Therefore, we conclude that $\pi_\delta$ cannot be equivalent to a Pythagorean representation.
\end{proof}

\subsection{The regular and induced representations of Thompson's groups.}

To conclude we mention some classical representations of Thompson's groups that are not Pythagorean. As noted in \cite{Brothier-Jones} and \cite{BW22}, all Pythagorean representations are not mixing. Indeed, any matrix coefficient of any P-representation of $F$ does not vanish at infinity. 
In particular, the left-regular representation $\lambda_F:F\act\ell^2(F)$ of $F$ is not Pythagorean. 
Moreover, all the deformations of $\lambda_F$ arising from Jones' technology in \cite{Brothier-Jones19-HnonT} are not Pythagorean.
Here, by Jones' technology we mean the functorial method pioneered by Jones which permits to lift an action of a category to its fraction groups (Pythagorean representation arise this way but there are many other constructions).

We have seen that the Wysiwyg representations of Jones of $F$ are all irreducible and not Pythagorean. 
To find other such examples we may consider quasi-regular representations or more generally monomial representations arising from {\it self-commensurating} subgroups. 
Indeed, if $H\subset F$ is a self-commensurating subgroup and $\chi:H\to\bS_1$ a representation, then its associated induced representation $\Ind_H^F\chi$ is irreducible (and conversely $\Ind_H^F\chi$ is irreducible only if $H\subset F$ is self-commensurating, see the original theorem of Mackey and also the article of Burger and de la Harpe \cite{Mack51, Burger-Harpe97}).
Our classification result of \cite{BW23} (and results on infinite dimensional atomic representations that will be covered in a future article) asserts that the only irreducible monomial Pythagorean representations of $F$ arise from the parabolic subgroups $F_p:=\{g\in F:\ g(p)=p\}$ of $F$ where $p$ is a point of the Cantor space.
Hence, other self-commensurating subgroups yields irreducible representations of $F$ that are not Pythagorean such as the oriented Jones subgroup $\Vec F$ of $F$ (as proved by Golan and Sapir in \cite{Golan-Sapir}).
Note that the quasi-regular representation $\lambda_{F/\Vec F}$ associated to $\Vec F$ is part of an infinite family of Jones' representations built from the Temperley--Lieb--Jones planar algebras \cite{Aiello-Brothier-Conti}. 
Although, it is still unknown if these later representations are generically irreducible.
There are other examples of self-commensurating subgroups that can be constructed diagrammatically, see specific examples given by Aiello--Nagnibeda \cite{Aiello-Nagnibeda22}.

\noindent
{\bf Nota Bene.} As a final note we remark that all the examples cited of representations of the Thompson groups that are not Pythagorean are all constructed using Jones' technology.


\newcommand{\etalchar}[1]{$^{#1}$}

\end{document}